\newtheorem{theorem}{Theorem}[section]
\newtheorem{prop}[theorem]{Proposition}
\newtheorem{lemma}[theorem]{Lemma}
\newtheorem{corollary}[theorem]{Corollary}
\theoremstyle{definition}
\newtheorem{definition}[theorem]{Definition}
\newtheorem{example}[theorem]{Example}
\theoremstyle{remark}
\newtheorem{remark}[theorem]{Remark}
 \numberwithin{equation}{section}
\newcommand{\de}{\delta}
\newcommand{\T}[1]{{#1}[x;\si ,\de]}
\newcommand{\Tn}{R[x;\si ,\de]}
\newcommand{\A}[1]{{#1}[x;\si]}
\newcommand{\si}{\sigma}
\newcommand{\udim}[2]{\mbox{\rm udim}_{#1}({#2})}
\begin{document}

\title{\normalsize \sc  Ore Extensions Satisfying a Polynomial Identity}
\author{ {\bf \normalsize Andr\'{e} Leroy and Jerzy
Matczuk\footnote{ The research was supported by Polish KBN grant
No. 1 P03A 032 27 }} \vspace{6pt}\\
 \normalsize  Universit\'{e} d'Artois,  Facult\'{e} Jean Perrin\\
\normalsize Rue Jean Souvraz,  62 307 Lens, France\\
   \normalsize  E-mail: leroy@euler.univ-artois.fr \vspace{6pt}\\
 \normalsize Institute of Mathematics, Warsaw University,\\
\normalsize Banacha 2, 02-097 Warsaw, Poland\\
 \normalsize E-mail: jmatczuk@mimuw.edu.pl}
\date{ }
\maketitle\markboth{\rm  A. LEROY AND J.MATCZUK}{ \rm PI ORE
EXTENSIONS}


\begin{abstract} Necessary and sufficient
 conditions for an Ore extension
 $S=R[x;\si,\de]$ to be a \mbox{\rm PI} ring are given in the case $\si$ is an injective
 endomorphism of a  semiprime
 ring $R$ satisfying the \mbox{\rm ACC} on annihilators. Also, for an arbitrary endomorphism
 $\tau$ of $R$, a characterization of Ore extensions $R[x;\tau]$
 which are \mbox{\rm PI} rings is given, provided the coefficient ring $R$ is noetherian.

\end{abstract}

\section*{\sc \normalsize Introduction}

The aim of the paper is to give necessary and sufficient
conditions for an Ore extension $R[x;\si,\de]$ to satisfy a
polynomial identity.  One of the special feature is that we do not
assume that $\si$ is an automorphism.

Clearly if $\Tn$ satisfies a polynomial identity, then $R$ has to
be a \mbox{\rm PI} ring as well.  Henceforth we  will always
assume that $R$ is a \mbox{\rm PI} ring.

In \cite{PV}, Pascaud and Valette showed that when $R$ is
semiprime  and $\si$ is an automorphism of $R$, then the Ore
extension $R[x;\si]$ satisfies a polynomial identity if and only
if $\si$ is of finite order on the center of $R$. We shall obtain
similar results even when $\si$ is not an automorphism.

 The \mbox{\rm PI} property of
general Ore extensions $\T{R}$ was studied by Cauchon in his
thesis \cite{C} in the case when the base ring $R$ is simple and
$\si$ is an automorphism of $R$.  In particular, Cauchon remarked
that nonconstant central polynomials of $\T{R}$ appears naturally
in this context.

On the other hand, the case of a noetherian base ring  was
considered by Damiano and Shapiro in \cite{DS}. They proved, in
particular, that the Ore extension $\A{R}$ over a noetherian PI
ring $R$ satisfies a polynomial identity if and only if the
automorphism $\si$ is of finite order on the center of $R/B$,
where $B$ denotes the prime radical of $R$.  This result will be
generalized to arbitrary endomorphism of $R$ in the last section.

 Let us also mention that, mainly
for Ore extensions coming from quantum groups, some authors are
also interested in the relations between the \mbox{\rm PI} degree
of a ring $R$ and the \mbox{\rm PI} degree of Ore extensions over
$R$ (Cf. e.g. \cite{BGO},\cite{Jon}).

 A somewhat related work is  that of  Bergen
and Grzeszczuk (Cf.  \cite{BG2}), where a  characterization of
 smash products $R\# U(L)$ satisfying polynomial identity is given, where
 R is a semiprime algebra of characteristic 0 acted by a Lie color algebra $L$.

In Section $1$, we recall some classical results and develop tools
that will play an essential role in later sections.

In Section $2$, we analyse the special case when $R$ is a prime
ring. The main results being Theorem \ref{main prime}, Theorem
\ref{prime case} and Theorem \ref{prime caracterization with
udim}.   We prove, in particular, that for a prime \mbox{\rm PI}
ring $R$ and an injective endomorphism $\si$ of $R$, $\T{R}$ is
\mbox{\rm PI} if and only if there exists a nonconstant polynomial
in the center of $\T{R}$ with regular leading coefficient if and
only if the center $Z(R)$ of $R$ has finite uniform dimension over
$Z(R)^{\si,\de}:=\{z \in Z(R) \, | \, \si(z)=z,\, \de(z)=0 \}$.

In Section $3$, we extend the results from the previous section to
the case of a semiprime coefficient ring $R$ satisfying the ACC on
annihilators.  We first recall the results of Cauchon and Robson
related to the action of an injective endomorphism $\si$ and a
$\si$-derivation $\de$ on a semisimple ring . Then we study the
case of  Ore extensions of endomorphism type ($\de=0$) showing in
particular (Proposition \ref{semiprime endomorhism})   that the
above mentioned result of Pascaud and Valette can be generalized
to the case when $\si$ is an  injective endomorphism of $R$.  The
general Ore extension $\T{R}$ is then analysed, first in the case
of a semisimple base ring  (Cf. Corollary \ref{C semisimple}).
Then  it is shown, in  Theorem \ref{main semiprime}, that the
necessary and sufficient conditions for $\T{R}$ to be a \mbox{\rm
PI} ring obtained in Section $2$ are also valid under the
assumption that $R$ is a semiprime \mbox{\rm PI} ring with the
\mbox{\rm ACC} on annihilators.

The last section is devoted to the study of the \mbox{\rm PI}
property of the Ore extension $\A{R}$ where $\si$ is an arbitrary
endomorphism of $R$ and the  coefficient ring $R$ is noetherian.
In particular, we give in Theorem \ref{general case}, a necessary
and sufficient conditionh for $\A{R}$ to satisfy a polynomial
identity.

\section{\sc \normalsize Preliminaries}

Throughout the paper $R$ stands for an associative ring with unity
and $Z(R)$ for its center.  For any multiplicatively closed subset
$S$ of $Z(R)$, $RS^{-1}$ denotes the localization of $R$ with
respect to the set of all regular elements from $S$. In
particular,  $RZ(R)^{-1}$ is the localization of $R$ with respect
to the Ore set of all central regular elements of $R$.

 For a right $R$-module $M$, $\udim{R}{M}$
denotes its uniform dimension.

In the following proposition we gather classical results which are
consequences of a generalized Posner's Theorem and the theorem of
Kaplansky (Cf. Rowen's book \cite{R}).
\begin{prop}\label{classics}
 For a semiprime \mbox{\rm PI} ring $R$, the following conditions are equivalent:
\begin{enumerate}
  \item $R$ is a left (right) Goldie ring;
  \item $R$ satisfies the \mbox{\rm ACC} condition on left (right)
  annihilators;
  \item $R$ has finitely many minimal prime ideals;
  \item $R$ possesses a semisimple classical left (right) quotient ring $Q(R)$ which is equal to
  the central localization $RZ(R)^{-1}$.
  \end{enumerate}
If $R$ satisfies one of the above equivalent conditions and
$\bigoplus_{i=1}^n B_i$ is a decomposition of the semisimple ring
$Q(R)$ into simple components, then $\dim_{Z(B_i)}B_i$ is finite,
for every $1\leq i\leq n$. In particular, $\udim{Z(Q(R))}{Q(R)}<
\infty$.
\end{prop}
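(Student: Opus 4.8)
The plan is to prove the cycle of implications $(1)\Rightarrow(2)\Rightarrow(3)\Rightarrow(4)\Rightarrow(1)$ and then dispose of the dimension statement separately, using Goldie's theorem for the steps that are purely about semiprime rings and reserving Posner's and Kaplansky's theorems for the steps where the \mbox{\rm PI} hypothesis is genuinely needed. The implication $(1)\Rightarrow(2)$ is immediate, since a right Goldie ring satisfies the \mbox{\rm ACC} on right annihilators by definition.

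For $(2)\Rightarrow(3)$ I would first note that the right annihilator of a two-sided ideal $I$ is again a two-sided ideal (it is a right ideal always, and a left ideal because $I$ is right-stable), so the \mbox{\rm ACC} on right annihilators forces the \mbox{\rm ACC} on annihilator ideals. A standard argument then shows that a semiprime ring with \mbox{\rm ACC} on annihilator ideals cannot carry an infinite family of minimal primes, since such a family would manufacture a strictly ascending chain of annihilator ideals. The reverse end of the cycle, $(4)\Rightarrow(1)$, is the easy half of Goldie's theorem: a ring possessing a semisimple Artinian classical quotient ring is automatically semiprime left and right Goldie, and since $RZ(R)^{-1}$ is a two-sided central localization this gives $(1)$ on both sides at once.

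The crux is $(3)\Rightarrow(4)$, and here the \mbox{\rm PI} hypothesis enters decisively. Writing $P_1,\dots,P_n$ for the minimal primes, semiprimeness gives $\bigcap_i P_i=0$, so $R$ embeds in $\prod_i R/P_i$ with each $R/P_i$ a prime \mbox{\rm PI} ring. By the generalized Posner's theorem each $R/P_i$ admits a simple Artinian classical quotient ring equal to its central localization $(R/P_i)Z(R/P_i)^{-1}$. The real work is to transfer these componentwise localizations into a single statement about $R$: I would verify that every central regular element of $R$ remains regular and becomes invertible in the product, and that inverting the central regular elements of $R$ yields precisely $\prod_i(R/P_i)Z(R/P_i)^{-1}$, a semisimple Artinian ring that serves as the classical quotient ring $Q(R)=RZ(R)^{-1}$. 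I expect the main obstacle to be exactly this matching step, namely controlling how $Z(R)$ sits inside the various $Z(R/P_i)$ so that the central localization of $R$ agrees with the product of the component localizations.

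Finally, for the dimension statement, decompose $Q(R)=\bigoplus_{i=1}^n B_i$ into simple components. Since $R$ is \mbox{\rm PI}, so is the central localization $Q(R)$, and hence so is each simple Artinian summand $B_i$; Kaplansky's theorem then gives $\dim_{Z(B_i)}B_i<\infty$. For the last assertion I would use that $Z(Q(R))=\bigoplus_i Z(B_i)$ is a finite direct product of fields whose factors act componentwise, so that $Q(R)=\bigoplus_i B_i$ as a $Z(Q(R))$-module is a finite direct sum of finite-dimensional spaces; additivity of uniform dimension then yields $\udim{Z(Q(R))}{Q(R)}=\sum_i\dim_{Z(B_i)}B_i<\infty$.
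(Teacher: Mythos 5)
The paper never proves this proposition at all: it is stated as a package of classical facts with a pointer to Rowen's book \cite{R}, so the only meaningful comparison is between your outline and the classical arguments it would have to reproduce. Most of your cycle is sound. $(1)\Rightarrow(2)$ is definitional; $(2)\Rightarrow(3)$ is the standard Chatters--Hajarnavis-type fact, and your observation that right annihilators of two-sided ideals are again two-sided is the correct entry point; $(4)\Rightarrow(1)$ is the converse half of Goldie's theorem, with the remark that a central localization is simultaneously a left and a right quotient ring correctly disposing of the left/right symmetry; and the closing dimension count --- Kaplansky on each simple component $B_i$, then additivity of uniform dimension over $Z(Q(R))=\bigoplus_{i=1}^n Z(B_i)$ --- is correct as written.

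The genuine gap is in $(3)\Rightarrow(4)$, exactly where you predicted trouble, and predicting an obstacle is not the same as clearing it: as written you supply no mechanism forcing the localization $RZ(R)^{-1}$ to reach all of $\prod_i(R/P_i)Z(R/P_i)^{-1}$. Note first that the equality $Q(R)=RZ(R)^{-1}$ is not a semiprime--Goldie formality --- for the (non-\mbox{\rm PI}) Weyl algebra $A_1(k)$ in characteristic $0$, conditions $(1)$--$(3)$ hold while $Z(A_1)=k$ gives $A_1Z(A_1)^{-1}=A_1\neq Q(A_1)$ --- so the gluing step must inject \mbox{\rm PI}-specific information, and componentwise Posner (which settles the case $n=1$) is not enough when $n\geq 2$. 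The tool you need but never name is the theorem, invoked by the paper itself in the proof of Lemma \ref{generalities on the center}(2), that every nonzero ideal of a semiprime \mbox{\rm PI} ring meets the center nontrivially. With it the matching step closes: put $N_i=\bigcap_{j\neq i}P_j\neq 0$ and choose $0\neq c_i\in Z(R)\cap N_i$; then $c_i\notin P_i$ (since $N_i\cap P_i=0$), $c_ic_j=0$ for $i\neq j$, and $c=c_1+\cdots+c_n$ is central and regular, because its image in each prime factor $R/P_j$ is the nonzero central, hence regular, element $\bar{c}_j$. Hence the $e_i=c_ic^{-1}$ are orthogonal central idempotents of $RZ(R)^{-1}$ summing to $1$; a variant of the same computation (replace $c_i$ by $c_iz$ when $e_iz\neq 0$, $z\in Z(R)$) shows each $e_iZ(R)Z(R)^{-1}$ is a field, and since $Z(RZ(R)^{-1})=Z(R)Z(R)^{-1}$, each summand $e_iRZ(R)^{-1}$ is a semiprime \mbox{\rm PI} ring whose center is a field, hence simple (nonzero ideals meet the center again), hence simple artinian and finite-dimensional over its center by Kaplansky. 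Finally, regular elements of $R$ remain regular in the semisimple ring $RZ(R)^{-1}$ and are therefore invertible there, which is what identifies $RZ(R)^{-1}$ with the classical quotient ring $Q(R)$ and with $\prod_i Q(R/P_i)$ --- a verification your sketch also leaves implicit when it says the product ``serves as'' $Q(R)$.
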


We will use frequently the above proposition without referring to
it.

The following observation is probably well-known but we could not
find it in the literature:
\begin{prop}
 Let $B=\bigcup_{i=0}^{\infty}A_i$ be a filtered
ring and $\mbox{\rm gr}(B)=\bigoplus_{i=0}^{\infty}A_i/A_{i-1}$
denote its associated graded ring.  If $B$ satisfies a polynomial
identity then $\mbox{\rm gr}(B)$ also satisfies a polynomial
identity.
\end{prop}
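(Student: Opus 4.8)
The plan is to reduce to a multilinear identity and then exploit the grading one degree at a time. First I would invoke the classical multilinearization result (available in Rowen's book \cite{R}): since $B$ is a \mbox{\rm PI} ring, it satisfies a nonzero multilinear identity
\[
f(y_1,\dots,y_n)=\sum_{\sigma\in S_n}c_\sigma\, y_{\sigma(1)}\cdots y_{\sigma(n)},
\]
in which each variable occurs exactly once in every monomial. The claim I would then establish is that $\mbox{\rm gr}(B)$ satisfies this very same polynomial $f$.

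Because $f$ is multilinear, hence additive in each argument, and $\mbox{\rm gr}(B)$ is additively generated by its homogeneous components $A_i/A_{i-1}$, it suffices to verify $f$ on homogeneous elements. So I would take homogeneous elements $\bar a_k=a_k+A_{d_k-1}$ with $a_k\in A_{d_k}$ for $1\le k\le n$, and set $d=d_1+\cdots+d_n$.

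The key observation is a degree count. Since $f$ is multilinear, every monomial $\bar a_{\sigma(1)}\cdots\bar a_{\sigma(n)}$ has the \emph{same} total degree $d$, independently of $\sigma$, because each degree $d_1,\dots,d_n$ is used exactly once. Applying the product rule of the associated graded ring, $\bar a_{\sigma(1)}\cdots\bar a_{\sigma(n)}=a_{\sigma(1)}\cdots a_{\sigma(n)}+A_{d-1}$, and therefore
\[
f(\bar a_1,\dots,\bar a_n)=\Big(\sum_{\sigma\in S_n}c_\sigma\, a_{\sigma(1)}\cdots a_{\sigma(n)}\Big)+A_{d-1}=f(a_1,\dots,a_n)+A_{d-1}.
\]
Since $B$ satisfies $f$, the element $f(a_1,\dots,a_n)$ vanishes in $B$, whence $f(\bar a_1,\dots,\bar a_n)=0$ in $\mbox{\rm gr}(B)$. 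As this holds for every homogeneous tuple, $\mbox{\rm gr}(B)$ satisfies $f$ and is thus a \mbox{\rm PI} ring.

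The only genuinely delicate point is the reduction to a multilinear identity: one must be sure that the multilinearization procedure yields a \emph{nonzero} polynomial, so that the resulting identity is nontrivial on $\mbox{\rm gr}(B)$. This is precisely the content of the classical multilinearization theorem, which I would cite rather than reprove. Everything after that reduction is the degree bookkeeping displayed above, which is routine.
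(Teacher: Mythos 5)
Your proof is correct, but it takes a genuinely different route from the paper. The paper argues via the Rees ring: it forms $G(B)=\{\sum_i a_ix^i\in B[x] : a_i\in A_i\}$, observes that $G(B)$ is \mbox{\rm PI} as a subring of the \mbox{\rm PI} ring $B[x]$, and concludes from the isomorphism $G(B)/xG(B)\simeq \mbox{\rm gr}(B)$, since \mbox{\rm PI} passes to quotients. You instead work directly: multilinearize the identity of $B$, and verify it on homogeneous elements of $\mbox{\rm gr}(B)$ by the degree count, using that every monomial of a multilinear polynomial evaluated at homogeneous arguments of degrees $d_1,\dots,d_n$ lands in the same component $A_d/A_{d-1}$ with $d=d_1+\cdots+d_n$, so that $f(\bar a_1,\dots,\bar a_n)=f(a_1,\dots,a_n)+A_{d-1}=0$; the reduction to homogeneous tuples is legitimate precisely because $f$ is additive in each slot. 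Your one flagged worry — that multilinearization produce a genuine (nonzero, with a coefficient $1$) identity — is indeed the classical theorem and is correctly delegated to the literature; note that the paper's route relies on the very same theorem implicitly, since the standard proof that $B$ \mbox{\rm PI} implies $B[x]$ \mbox{\rm PI} also goes through a multilinear identity (a multilinear $f$ vanishing on $B$ vanishes on $B[x]$ because $x$ is central). What each approach buys: the paper's argument is shorter modulo standard closure properties of \mbox{\rm PI} rings (subrings, quotients, polynomial extensions), whereas your direct computation gives the sharper and more explicit conclusion that $\mbox{\rm gr}(B)$ satisfies the \emph{same} multilinear identity as $B$, so in particular the minimal degree of a multilinear identity does not increase in passing to the associated graded ring — information the Rees-ring argument does not immediately provide.
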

\begin{proof}
Let $G(B)$ denote the Rees ring of $B$, that is $G(B)$ is a
subring of the polynomial ring $B[x]$ consisting of all
polynomials $\sum_i a_ix^i\in B[x]$ such that $a_i\in A_i$, for
any $i\geq 0$. Then $G(B)$ is a \mbox{\rm PI} ring as a subring of
the \mbox{\rm PI} ring $B[x]$. It is known that the ring
$G(B)/xG(B)$ is isomorphic to $\mbox{\rm gr}(B)$ and the thesis
follows.
\end{proof}

 An Ore extension of a ring $R$ is denoted by $\Tn$,
 where $\si$ is an endomorphism of $R$ and $\de$ is a $\si$-derivation,
 i.e. $\de \colon R\rightarrow R$ is an additive map such that
$\de(ab)=\si(a)\de(b)+\de(a)b$, for all $a,b\in R$.
 Recall that
elements of $\Tn$ are polynomials in $x$ with coefficients written
on the left. Multiplication in $\Tn$ is given by the
multiplication in $R$ and the condition $xa=\si(a)x+\de(a)$, for
all  $a\in R$.

 The Ore extension $R[x;\si,\de]$ has a natural
filtration given by the degree and the associated graded ring is
isomorphic to $R[x;\si]$. Therefore, by the above proposition, we
have:
\begin{corollary}\label{graded PI}
If $\T{R}$ is a \mbox{\rm PI} ring, then $R[x;\si]$  also
satisfies a polynomial identity.
\end{corollary}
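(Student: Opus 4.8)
The plan is to exhibit $\A{R}$ as the associated graded ring of $S=\Tn$ for its natural degree filtration and then quote the preceding proposition. First I would set $A_i=\{f\in S\mid \deg f\le i\}$ for $i\ge 0$, with $A_{-1}=0$. Since $\deg(fg)\le\deg f+\deg g$ for all $f,g\in S$, these additive subgroups satisfy $A_i\cdot A_j\subseteq A_{i+j}$, and clearly $S=\bigcup_{i\ge0}A_i$; thus $\{A_i\}$ is an exhaustive ring filtration of $S$ of exactly the type required by the previous proposition.

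The key step is to identify $\mbox{\rm gr}(S)=\bigoplus_{i\ge0}A_i/A_{i-1}$ with $\A{R}$. As an additive group each component $A_i/A_{i-1}$ is carried bijectively onto $\{ax^i\mid a\in R\}\subseteq\A{R}$ by sending the class of a polynomial of degree $\le i$ to its coefficient of $x^i$ times $x^i$; this is well defined and injective because two such polynomials represent the same class exactly when their degree-$i$ coefficients agree. To see that this additive isomorphism respects multiplication, I would compute $ax^i\cdot bx^j$ in $S$ for $a,b\in R$. Applying the commutation rule $xr=\si(r)x+\de(r)$ repeatedly gives $x^ib=\si^i(b)x^i+(\text{lower-degree terms})$, and hence $ax^i\cdot bx^j=a\si^i(b)x^{i+j}+(\text{lower-degree terms})$. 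Passing to $A_{i+j}/A_{i+j-1}$ discards every lower-degree contribution, so the induced product of the two classes is represented by $a\si^i(b)x^{i+j}$, which is precisely the product of $ax^i$ and $bx^j$ in $\A{R}$, where $xr=\si(r)x$.

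The main — and essentially the only — point requiring care is this last verification: one must confirm that each occurrence of $\de$ produces a strictly lower-degree term and is therefore annihilated in the associated graded ring, so that the $\de$-twisted multiplication of $S$ collapses to the purely $\si$-twisted multiplication of $\A{R}$. Granting this, the conclusion follows at once: if $\Tn$ is a \mbox{\rm PI} ring, then by the preceding proposition its associated graded ring $\mbox{\rm gr}(S)\cong\A{R}$ satisfies a polynomial identity.
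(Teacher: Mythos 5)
Your proposal is correct and follows exactly the paper's own route: the paper likewise filters $\Tn$ by degree, notes that the associated graded ring is isomorphic to $\A{R}$, and invokes the preceding proposition on filtered \mbox{\rm PI} rings. You merely spell out the verification (left implicit in the paper) that the $\de$-terms drop in degree and so vanish in $\mbox{\rm gr}(\Tn)$, which is accurate and harmless.
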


\begin{lemma}
\label{generalities on the center} Let $\si$ be an endomorphism of
$R$. Then:
\begin{enumerate}
\item Suppose that  $\si$ is injective and there
 exists $n\ge 1$ such that $\si^n\vert_{Z(R)}$
is an automorphism of $Z(R)$. Then $\si\vert_{Z(R)}$ is an
automorphism of $Z(R)$.
\item Suppose that $R$ is a semiprime \mbox{\rm PI} ring and $\si$ is injective when
restricted to the center $Z(R)$, then $\si$ is injective on $R$.
\item Suppose that $R$ is simple finite dimensional over $Z(R)$ and $\si\vert_{Z(R)}$
is an automorphism of $Z(R)$ then $\si$ is an automorphism of $R$.
\end{enumerate}
\end{lemma}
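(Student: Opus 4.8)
The plan is to treat the three parts separately, as each rests on a different ingredient. For part (1), note first that the hypothesis that $\si^n\vert_{Z(R)}$ be an automorphism of $Z(R)$ already presupposes $\si(Z(R))\subseteq Z(R)$, so $\si\vert_{Z(R)}$ is a well-defined endomorphism of $Z(R)$, and it is injective because $\si$ is. Only surjectivity remains. Given $w\in Z(R)$, surjectivity of $\si^n\vert_{Z(R)}$ yields $z\in Z(R)$ with $\si^n(z)=w$; then $w=\si\bigl(\si^{n-1}(z)\bigr)$ with $\si^{n-1}(z)\in Z(R)$, so $w\in\si(Z(R))$. Thus $\si\vert_{Z(R)}$ is an injective surjective endomorphism, hence an automorphism.

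For part (2), I would argue by contradiction using the two-sided ideal $I=\ker\si$. The essential tool is the classical result (Rowen's theorem; see \cite{R}) that in a semiprime \mbox{\rm PI} ring every nonzero two-sided ideal meets the center nontrivially. If $\si$ were not injective, then $I\neq 0$, whence $I\cap Z(R)\neq 0$; a nonzero element of $I\cap Z(R)$ is a nonzero central element killed by $\si$, contradicting injectivity of $\si\vert_{Z(R)}$. I expect this to be the main nontrivial step, since it is the only place where semiprimeness and the \mbox{\rm PI} condition are genuinely needed; the other two parts are elementary by comparison.

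For part (3), simplicity of $R$ forces $\ker\si\in\{0,R\}$, and $\si(1)=1$ rules out $\ker\si=R$, so $\si$ is injective. Here $K:=Z(R)$ is a field and $R$ is a finite-dimensional $K$-algebra, say $\dim_K R=d$. The map $\si$ is additive and $K$-semilinear relative to the field automorphism $\si\vert_K$, since $\si(kr)=\si(k)\si(r)$. Surjectivity of $\si\vert_K$ shows that $\si(R)$ is a $K$-subspace, and if $e_1,\dots,e_d$ is a $K$-basis of $R$ then $\si(e_1),\dots,\si(e_d)$ both $K$-span $\si(R)$ and are $K$-independent: a relation $\sum k_i\si(e_i)=0$ can be rewritten as $\si\bigl(\sum \si^{-1}(k_i)e_i\bigr)=0$, forcing $\sum\si^{-1}(k_i)e_i=0$ by injectivity and hence all $k_i=0$. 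Therefore $\dim_K\si(R)=d=\dim_K R$, so $\si(R)=R$ and $\si$ is an automorphism.
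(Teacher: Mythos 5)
Part (1) of your proposal has a genuine gap, and it sits precisely at the heart of the statement. Your opening claim that the hypothesis ``$\si^n\vert_{Z(R)}$ is an automorphism of $Z(R)$'' already presupposes $\si(Z(R))\subseteq Z(R)$ is false: that hypothesis only says the $n$-th iterate maps $Z(R)$ bijectively onto itself, and says nothing about the intermediate images $\si^k(Z(R))$ for $1\le k<n$. In fact the inclusion $\si(Z(R))\subseteq Z(R)$ is the nontrivial content of part (1) --- the conclusion ``$\si\vert_{Z(R)}$ is an automorphism of $Z(R)$'' asserts it. Two warning signs were available to you: under your reading, the standing hypothesis that $\si$ is injective on all of $R$ is never used (injectivity of $\si$ on $Z(R)$ is automatic, since $\si(z)=0$ forces $\si^n(z)=0$); and the lemma is later invoked (in Lemma \ref{si identity on center}(2) and Proposition \ref{inner auto}) in situations where one only knows $\si^n\vert_{Z(R)}=\mbox{\rm id}_{Z(R)}$ and must \emph{deduce} that $\si$ preserves the center, so a version of (1) that assumes $\si(Z(R))\subseteq Z(R)$ would be useless there. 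The paper supplies exactly the missing step via a commutator computation: for $z\in Z(R)$ and $r\in R$ one has $\si^{n-1}([\si(z),r])=[\si^n(z),\si^{n-1}(r)]=0$ since $\si^n(z)\in Z(R)$, and injectivity of $\si$ (hence of $\si^{n-1}$) forces $[\si(z),r]=0$, i.e.\ $\si(Z(R))\subseteq Z(R)$. Once that is in hand, your surjectivity argument ($w=\si(\si^{n-1}(z))$, with $\si^{n-1}(z)$ now known to lie in $Z(R)$) does go through and finishes (1).

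Parts (2) and (3) are correct and essentially the paper's own proofs. For (2) you use exactly the paper's one-line argument: a nonzero two-sided ideal of a semiprime \mbox{\rm PI} ring meets the center nontrivially, applied to $\ker\si$. For (3) your basis computation is a rephrasing of the paper's argument, which views $\si$ as an injective $Z(R)$-linear map from ${}_{Z(R)}R$ to the space $R'$ ($R$ with the $Z(R)$-action twisted by $\si$, i.e.\ $z\cdot r=\si(z)r$) and compares finite dimensions; your verification that $\si(e_1),\dots,\si(e_d)$ are $K$-independent, using surjectivity of $\si\vert_K$ to pull coefficients inside and then injectivity of $\si$, is the same semilinear dimension count made explicit, and your preliminary observation that simplicity of $R$ together with $\si(1)=1$ gives injectivity is a detail the paper leaves implicit.
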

\begin{proof}
$(1)$. Let  $[a,b]$ denote  the commutator of elements $a,b\in R$
, i.e. $[a,b]=ab-ba$. Pick $n\geq 1$  such that
$\si^n\vert_{Z(R)}$ is an automorphism of $Z(R)$.

Then, for any $r\in R$ and $z\in Z(R)$,  we have
$$\si^{n-1}([\si(z),r])=[\si^n(z),\si^{n-1}(r)]=0.$$  Hence
$[\si(z),r]\in \ker \si^{n-1}=0$. This gives $(1)$.

The statement $(2)$ is clear, as any nonzero ideal of a semiprime
\mbox{\rm PI} ring intersects the center nontrivially.

$(3).$  Since $R$ is simple, $Z(R)$ is a field.  Let $R'$ be the
left $Z(R)$-linear space $R$ with the action of $Z(R)$
 twisted by $\si$, i.e. $z\cdot r=\si(z)r$, for $z\in Z(R)$, $r\in
 R'$. Now the thesis is a consequence of  the fact
 that $\si \colon _{Z(R)}R\rightarrow  _{Z(R)}R'$ is an
 injective homomorphism of $Z(R)$-linear spaces of the same finite dimension.
\end{proof}

Braun and  Hajarnavis showed in \cite{BH} that if $\si$ is an
injective endomorphism of a prime  noetherian  \mbox{\rm PI} ring
$R$ such that $\si|_{Z(R)}=\mbox{id}_{Z(R)}$, then $\si$ is an
automorphism of $R$. We will see in Example \ref{example}, that if
$R$ is a prime \mbox{\rm PI} ring (so it has the \mbox{\rm ACC} on
annihilators), then $\si$ does not have to be onto if
$\si|_{Z(R)}=\mbox{id}_{Z(R)}$.

In the lemma below we quote some known  results. The first
statement is a special case of a result of Jategaonkar (Cf.
Proposition 2.4 \cite{J}). The second one is exactly Theorem 3.8
and Proposition 3.6 from \cite{LM1}, respectively.
\begin{lemma} \label{known facts}
Let $R$ be a semiprime left Goldie ring. Suppose that the
endomorphism $\si$ is injective. Then:
\begin{enumerate}
   \item Let $\mathcal{C}$ be the
  set of all regular elements of $R$. Then $\si(\mathcal{C} )\subseteq
  \mathcal{C}$;
  \item  $\T{R}$  is a  semiprime left Goldie ring. When $R$ is prime, then $\T{R}$
  is also  a  prime ring.
\end{enumerate}
\end{lemma}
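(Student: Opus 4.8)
The plan is to treat the two parts separately, reducing each to a statement about leading coefficients and to the Goldie structure of $R$.

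For $(1)$, I would first recall that, by Goldie's theorem, the semiprime left Goldie ring $R$ has a semisimple Artinian classical left quotient ring $Q$, and that for $c\in R$ one has $c\in\mathcal{C}$ if and only if the left annihilator $\{a\in R:ac=0\}$ vanishes, equivalently $Rc$ is an essential left ideal, equivalently $c$ becomes a unit in $Q$. The last equivalence is the cleanest: the left annihilator of $c$ in $Q$ equals $Q\cdot\{a\in R:ac=0\}$, and an element of a semisimple Artinian ring with zero left annihilator is a unit. So the whole content of $(1)$ is that the injective endomorphism $\si$ preserves this property. Since $\si$ is a ring isomorphism of $R$ onto the subring $\si(R)$, the element $\si(c)$ is automatically regular inside $\si(R)$; the only thing to prove is that it remains regular in the larger ring $R$. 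When $\si$ is onto this is immediate, so the sole obstacle is the non-surjectivity of $\si$, which is exactly the point handled by the cited Proposition~2.4 of \cite{J}. Concretely, I would argue with reduced rank, which is additive on short exact sequences because $Q$ is a flat localization of $R$: one shows that the reduced rank of $R/R\si(c)$ is zero by transporting through $\si$ the fact that $R/Rc$ is a torsion module of reduced rank zero, the finiteness of $\udim{R}{{}_RR}$ being what prevents any rank from leaking out. I expect this transport across a non-surjective $\si$ to be the genuinely delicate step.

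For the primeness and semiprimeness assertions in $(2)$, I would use the degree filtration and leading coefficients. Suppose $fSg=0$ with $f,g\in S:=\T{R}$ nonzero, of degrees $m,k$ and leading coefficients $a,b$. Evaluating the identity on $h=r x^{t}$ for $r\in R$, $t\ge 0$, and reading off the top-degree coefficient of $f\,h\,g$ gives $a\,\si^{m}(r)\,\si^{m+t}(b)=0$ for all $r$ and $t$; taking $t=0$ yields $a\,\si^{m}(R)\,\si^{m}(b)=0$, that is $a\,\si^{m}(Rb)=0$. Thus the crux is the ring-theoretic claim that, for a semiprime (respectively prime) $R$ and an injective $\si$, the relation $a\,\si^{m}(R)\,\si^{m}(b)=0$ forces $a=0$ or $b=0$ (taking $b=a$ for the semiprime case, where $f=g$ is chosen of minimal degree in an ideal squaring to zero, so that $fSf\subseteq N^{2}=0$). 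This is where injectivity of $\si$ enters decisively, being precisely the phenomenon isolated in $(1)$, namely that $\si$ sends regular elements to regular elements, together with the absence of nilpotent ideals in $R$. I regard verifying this claim as the main obstacle of part $(2)$.

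Finally, to obtain that $S$ is left Goldie I would transfer the two Goldie conditions from $R$ to $S$. Using part $(1)$ to guarantee that the relevant leading coefficients stay regular, one checks that an independent family of left ideals of $S$ induces, via leading coefficients, an independent family of left ideals of $R$, so that $\udim{R}{{}_RR}<\infty$ forces $\udim{S}{{}_SS}<\infty$; similarly a strictly ascending chain of left annihilators in $S$ would produce one in $R$, contradicting the ACC on left annihilators in $R$. Together with the semiprimeness established above, Goldie's theorem then yields that $S$ is a semiprime left Goldie ring, and a prime ring when $R$ is prime. An alternative, perhaps cleaner, route for the Goldie part is to use $(1)$ to extend $\si$ and $\de$ to the semisimple quotient ring $Q$, realize $S$ as a subring of $Q[x;\si,\de]$, and exhibit $S$ as a left order therein; this is essentially the content of Theorem~3.8 and Proposition~3.6 of \cite{LM1}, but the uniform-dimension and annihilator transfer above seems to me the most direct.
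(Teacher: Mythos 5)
Your proposal does not close either of the two steps that carry the content of this lemma; note that the paper itself does not prove them but quotes part (1) from Jategaonkar (\cite{J}, Proposition 2.4) and part (2) from \cite{LM1} (Theorem 3.8 and Proposition 3.6), and what you leave open is exactly what those references establish. For (1), your reduced-rank transport is circular as sketched: injectivity of $\si$ gives at once that $\si(R)/\si(R)\si(c)$ is torsion as a module over the subring $\si(R)\cong R$, but the module you must control is $R/R\si(c)$ over $R$, and $R$ may be very far from finitely generated or rank-controlled over $\si(R)$. To convert ``$\si(R)$-torsion'' into ``$R$-torsion'' you need precisely that elements regular in $\si(R)$, such as $\si(c)$, remain regular in $R$ --- the statement being proved. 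Reduced rank and its additivity live over one fixed Goldie base ring and do not bridge the two base rings; ``the finiteness of $\udim{R}{R}$ prevents rank from leaking'' is a hope, not an argument, and you yourself flag this step as unproved.

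For (2), the crux claim you isolate is actually false in the semiprime case: take $R=K\times K$ semisimple, $\si$ the swap automorphism, $a=(1,0)$, $m=1$; then $a\,\si(r)\,\si(a)=(1,0)(r_2,r_1)(0,1)=0$ for all $r$, although $a\neq 0$. So setting $t=0$ discards essential information (here $t=1$ already forces $a=0$), and exploiting the full family $a\,\si^{m}(R)\,\si^{m+t}(a)=0$, $t\ge 0$, over a semiprime Goldie ring is exactly the nontrivial work of \cite{LM1}, done after extending $\si,\de$ to $Q(R)$ and invoking the Cauchon--Robson decomposition. (The prime case can be completed along your lines, though you do not do it: from $a\,\si^{m}(Rb)=0$ one gets $a\,\si^{m}(RbR)=0$; the nonzero ideal $RbR$ of a prime Goldie ring is an essential left ideal, hence contains a regular element $c$, and $\si^{m}(c)\in\mathcal{C}$ by part (1), so $a=0$.) The same swap example also destroys your leading-coefficient transfer of the Goldie conditions: in $S=(K\times K)[x;\si]$ the left ideals $S(1,0)$ and $S(0,1)$ are independent, yet since $x^n(1,0)=\si^n(1,0)x^n$ their degree-$n$ leading coefficients lie in $K\times 0$ or $0\times K$ alternately with the parity of $n$, so over all degrees the two ideals produce identical sets of leading coefficients and the induced ideals of $R$ are not independent; annihilator chains suffer from the same $\si$-shifting across degrees. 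Thus the route you set aside as an ``alternative'' --- extending to $Q(R)[x;\si,\de]$ and exhibiting $\T{R}$ as a left order there, i.e.\ the argument of \cite{LM1} --- is not an alternative but the proof.
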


 As we will see in the
following result, the above lemma  will enable us to reduce some
of our considerations to the case when the coefficient ring $R$ is
semisimple.
\begin{prop}\label{reduction to semisimple}
 Let $R$ be  a semiprime left Goldie ring.
 Suppose that $\si$ is  an injective endomorphism of $R$.  Then $\si$ and $\de$
 can be uniquely extended to the classical ring of quotient $Q(R)$  of $R$, and the
 following conditions are equivalent:
\begin{enumerate}
  \item $\T{R}$ is a \mbox{\rm PI} ring.
  \item $\T{Q(R)}$ is a \mbox{\rm PI} ring.
\end{enumerate}
\end{prop}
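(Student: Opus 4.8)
The proposition splits into two tasks: the unique extension of $\si$ and $\de$ to $Q(R)$, and the PI equivalence. I would dispose of the extensions first. Writing $\mathcal{C}$ for the set of regular elements of $R$, Lemma \ref{known facts}(1) gives $\si(\mathcal{C})\subseteq\mathcal{C}$, so the universal property of the Ore localization $Q(R)=\mathcal{C}^{-1}R$ yields a unique extension of $\si$ to an endomorphism of $Q(R)$, determined by $\si(c^{-1}a)=\si(c)^{-1}\si(a)$. Applying $\de$ to the relation $c^{-1}c=1$ forces $\de(c^{-1})=-\si(c)^{-1}\de(c)c^{-1}$, and a routine verification shows that $\de(c^{-1}a)=\si(c^{-1})\de(a)+\de(c^{-1})a$ is well defined and is the unique $\si$-derivation of $Q(R)$ restricting to $\de$ on $R$. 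With $\si,\de$ so extended, $\T{Q(R)}$ is defined and contains $S:=\T{R}$ as a subring.

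The heart of the proof, and the step I expect to require the most care, is to realize $\T{Q(R)}$ as a localization of $S$ squeezed inside $Q(S)$. By Lemma \ref{known facts}(2), $S$ is semiprime left Goldie, hence possesses a classical quotient ring $Q(S)$. I would first show that every $c\in\mathcal{C}$ is regular in $S$: if $cs=0$ then, since $cs=\sum ca_ix^i$ and $c$ is regular in $R$, all coefficients of $s$ vanish; if $sc=0$ with $s$ of degree $n$ and leading coefficient $a_n$, the degree-$n$ coefficient of $sc$ is $a_n\si^n(c)$, and $\si^n(c)\in\mathcal{C}$ (again Lemma \ref{known facts}(1)) is regular, forcing $a_n=0$, so induction gives $s=0$. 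Thus $\mathcal{C}$ consists of regular elements of $S$ and becomes invertible in $Q(S)$. Next, a common-left-denominator argument shows every element $\sum q_ix^i$ of $\T{Q(R)}$ can be written as $c^{-1}\sum b_ix^i=c^{-1}s$ with $c\in\mathcal{C}$ and $s\in S$. Applying this form to the element $sc^{-1}\in\T{Q(R)}$ gives $sc^{-1}=(c')^{-1}s'$, whence $c's=s'c$; this is precisely the left Ore condition for $\mathcal{C}$ in $S$. Hence $\mathcal{C}$ is a left denominator set, $\T{Q(R)}\cong\mathcal{C}^{-1}S$, and since $\mathcal{C}$ is regular in $S$ this localization embeds in $Q(S)$, yielding the chain $S\subseteq\T{Q(R)}\subseteq Q(S)$. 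This squeezing is the crux, and everything else is formal.

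The equivalence then follows quickly from this chain. The implication $(2)\Rightarrow(1)$ is immediate, as $S=\T{R}$ is a subring of the PI ring $\T{Q(R)}$, and subrings of PI rings are PI. For $(1)\Rightarrow(2)$, suppose $S$ is PI. Being also semiprime left Goldie, $S$ satisfies the hypotheses of Proposition \ref{classics}, so $Q(S)$ is semisimple and each of its simple components is finite dimensional over its center; consequently $Q(S)$ satisfies a standard identity and is PI. Since $\T{Q(R)}$ is a subring of $Q(S)$ by the previous paragraph, $\T{Q(R)}$ is PI, which is exactly $(2)$.
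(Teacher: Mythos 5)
Your proof is correct and follows essentially the same route as the paper: both extend $\si,\de$ to $Q(R)$ via $\si(\mathcal{C})\subseteq\mathcal{C}$, show $\mathcal{C}$ is a left Ore set of regular elements in $\T{R}$ so that $\T{Q(R)}=\mathcal{C}^{-1}(\T{R})$ embeds in the semisimple \mbox{\rm PI} ring $Q(\T{R})$, and conclude. You merely make explicit several verifications (regularity of $\mathcal{C}$ in $\T{R}$, the derivation formula, the Ore-condition computation) that the paper leaves implicit.
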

\begin{proof}By Lemma \ref{known facts}(1), $\si(\mathcal{C})\subseteq \mathcal{C}$,
 where $\mathcal{C}$ is the set of all regular elements of $R$.
This means that $\si$ and $\de$ can be uniquely extended to an
injective endomorphism $\si$ and a $\si$-derivation $\de$ of
$Q(R)=\mathcal{C}^{-1}R$ and we can consider the Ore extension
$\T{Q(R)}$.

The implication $(2)\Rightarrow (1) $ is obvious.

$(1)\Rightarrow (2).$
  Suppose $\T{R}$
 satisfies a polynomial identity. Then, by Lemma  \ref{known facts}(2),
  $\T{R}$ is a semiprime \mbox{\rm PI} left Goldie
 ring. Thus,  $Q(\T{R})$ is  a  semisimple \mbox{\rm PI} ring.

Clearly all elements from the set  $\mathcal{C}$   are invertible
in $Q(R)[x;\si,\de]$ and every element from $Q(R)[x;\si ,\de]$ can
be presented in the form $c^{-1}p$ for some $c\in \mathcal{C}$ and
$p\in R[x;\si ,\de]$. This means that $\mathcal{C}$ is a left Ore
set in $R[x;\si ,\de]$ and $\mathcal{S}^{-1}(R[x;\si
,\de])=Q(R)[x;\si,\de]$.
 This yields, in particular, that there exists a natural embedding of
$\T{Q(R)}$ into the \mbox{\rm PI} ring $Q(\T{R})$. This shows that
$\T{Q(R)}$ satisfies a polynomial identity.
\end{proof}

 In the sequel we will need
the following:

\begin{lemma}
\label{udim and localizations} Let $R$ be a ring and $\cal S$ be a
right Ore set of regular elements.  If $M$ is a right $R$-module
which is $\cal S$-torsion free then
$$ \udim{R}{M} = \udim{{R{\cal S}^{-1}}}{M\otimes_R R{\cal S}^{-1}} \,.
$$
In particular, if $R$ is a commutative integral domain and $M$ is
a torsion free right $R$-module then $\udim{R}{M} =
\dim_K(M\otimes_R K)$, where $K$ is the field of fractions of $R$.
\end{lemma}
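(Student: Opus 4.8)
The plan is to introduce the localized module $L:=M\otimes_R R{\cal S}^{-1}$ and to prove the asserted equality by splitting it into the chain
\[
\udim{R}{M}=\udim{R}{L}=\udim{R{\cal S}^{-1}}{L},
\]
the first equality comparing uniform dimensions over $R$, the second passing to the localized ring. Everything rests on two standard properties of localization at an Ore set $\cal S$ of regular elements: the functor $-\otimes_R R{\cal S}^{-1}$ is exact, and an $\cal S$-torsion free module embeds in its localization. First I would note that, since $M$ is $\cal S$-torsion free, the canonical map $M\to L$, $m\mapsto m\otimes 1$, is injective, so I may regard $M$ as an $R$-submodule of $L$. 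Every element of $L$ has the form $ms^{-1}$ with $m\in M$, $s\in{\cal S}$; if such an element is nonzero then $ms^{-1}\cdot s=m$ is a nonzero element of $M$, so $M$ is essential in $L$ as an $R$-module. Because an essential extension does not change uniform dimension, this gives $\udim{R}{M}=\udim{R}{L}$.

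For the second equality I would argue both inequalities. Any independent family of $R{\cal S}^{-1}$-submodules of $L$ is, by restriction of scalars, an independent family of $R$-submodules, whence $\udim{R{\cal S}^{-1}}{L}\le\udim{R}{L}$. For the reverse, I would take independent nonzero $R$-submodules $N_1,\dots,N_k$ of $L$ and apply the exact functor $-\otimes_R R{\cal S}^{-1}$ to the injection $\bigoplus_{i=1}^k N_i\hookrightarrow L$. Since $L$ is already an $R{\cal S}^{-1}$-module we have $L\otimes_R R{\cal S}^{-1}\cong L$, and exactness turns the above into an injection $\bigoplus_{i=1}^k\bigl(N_i\otimes_R R{\cal S}^{-1}\bigr)\hookrightarrow L$ of $R{\cal S}^{-1}$-modules; each summand is nonzero because $N_i$, being a submodule of the $\cal S$-torsion free module $L$, embeds in its localization. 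Thus the images form an independent family of $k$ nonzero $R{\cal S}^{-1}$-submodules, so $k\le\udim{R{\cal S}^{-1}}{L}$, and taking the supremum over such families yields $\udim{R}{L}\le\udim{R{\cal S}^{-1}}{L}$. Combining the two steps proves the main equality.

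The ``in particular'' case then follows at once: when $R$ is a commutative integral domain one takes ${\cal S}=R\setminus\{0\}$, so $R{\cal S}^{-1}=K$ is the field of fractions and $\cal S$-torsion freeness is exactly torsion freeness; over the field $K$ uniform dimension coincides with $K$-dimension, giving $\udim{R}{M}=\udim{K}{M\otimes_R K}=\dim_K(M\otimes_R K)$. I expect the only delicate point to be the transfer of independence across the localization in the second step: one must be sure both that the localized inclusion stays injective and that no summand collapses to zero, and both of these are precisely what the $\cal S$-torsion freeness (automatic for submodules of $L$) guarantees. The preservation of uniform dimension under essential extensions and the isomorphism $L\otimes_R R{\cal S}^{-1}\cong L$ are routine and may be invoked directly.
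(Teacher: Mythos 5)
Your proof is correct, and it takes the route the paper itself intends: the paper gives no argument beyond citing Theorem 6.14 of Lam's book for the commutative case and asserting that the proof there ``can be easily extended'' to the general statement, and your essential-extension-plus-flat-localization argument is precisely that extension, with the two delicate points (injectivity of the localized inclusion and nonvanishing of the localized summands, both via $\cal S$-torsion freeness) correctly handled. In effect you have supplied the details the paper leaves to the reader.
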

\begin{proof}
The particular case comes from \cite{L} Theorem 6.14 and the proof
given there can be easily extended to get the first statement.
\end{proof}

 In the next lemma we will consider Ore extensions of the form $R[x;\phi]$, where  $\phi$
denotes either an automorphism or a derivation of $R$. $R^{\phi}$
will denote a subring of constants, i.e.  $R^\phi=\{x \in R \,
\vert \, \phi(x)=x\}$ when $\phi$ is an automorphism and
$R^\phi=\{x \in R \, \vert \, \phi(x)=0\}$, when $\phi$ is a
derivation.
\begin{lemma}
\label{the untwisted cases} Let $R$ be a ring and
$Z=Z(R[x;\phi])$, where $\phi$ is either an automorphism or a
derivation of $R$. Then:
\begin{enumerate}
\item $\udim{R^\phi}{R} \leq \udim{Z}{R[x;\phi]}$.
\item Suppose that $R$ is a semiprime ring with the \mbox{\rm ACC} on
annihilators. If $R[x;\phi]$ is a \mbox{\rm PI} ring, then:
\begin{enumerate}
\item $\udim{R^\phi}{R} <\infty$.
\item  If every regular element of $Z(R)^{\phi}$ is regular in $Z(R)$,
then:\\
 $Z(R)Z(R)^{-1}=Z(R)(Z(R)^{\phi})^{-1}$ and $Q(R)=R(Z(R)^{\phi})^{-1}$.
\end{enumerate}
\end{enumerate}
\end{lemma}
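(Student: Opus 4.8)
The plan is to treat statement (1) first, since (2) builds on it. The crux of (1) is the observation that every central element of $R[x;\phi]$ has all of its coefficients in $R^\phi$. Indeed, writing $z=\sum_k a_k x^k$ and computing the commutator $[x,z]=xz-zx$ yields $\sum_k \phi(a_k)x^k$ in the derivation case, and $\sum_k(\sigma(a_k)-a_k)x^{k+1}$ in the automorphism case; since $z$ is central this must vanish, forcing $a_k\in R^\phi$ for all $k$. Given this, I would fix an independent family $u_1R^\phi,\dots,u_nR^\phi$ of nonzero submodules of $R$ and show that $u_1Z,\dots,u_nZ$ is independent in $R[x;\phi]$: from a relation $\sum_i u_iz_i=0$ with $z_i=\sum_k z_{ik}x^k\in Z$ and $z_{ik}\in R^\phi$, comparing the coefficients of each $x^k$ gives $\sum_i u_iz_{ik}=0$, and independence over $R^\phi$ forces $u_iz_{ik}=0$, hence $u_iz_i=0$. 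As each $u_iZ\neq 0$, this yields $\udim{R^\phi}{R}\le\udim{Z}{R[x;\phi]}$.

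For (2)(a), I would first note that $R$ is a semiprime PI ring with the \mbox{\rm ACC} on annihilators, hence Goldie by Proposition \ref{classics}, and that the relevant endomorphism (the identity, resp. $\sigma$) is injective; so by Lemma \ref{known facts} the ring $W:=R[x;\phi]$ is semiprime Goldie, and being PI it has a semisimple quotient ring $Q(W)=WZ(W)^{-1}$ with $\udim{Z(Q(W))}{Q(W)}<\infty$ by Proposition \ref{classics}. The key point is that every regular element $c$ of $Z=Z(W)$ is regular in $W$: its annihilator is a two-sided ideal meeting the center trivially, and in a semiprime PI ring such an ideal is zero. Thus $W$ is torsion-free over $Z$ with respect to the set $\mathcal S$ of its regular elements, and Lemma \ref{udim and localizations} gives $\udim{Z}{W}=\udim{Z\mathcal S^{-1}}{W\mathcal S^{-1}}=\udim{Z(Q(W))}{Q(W)}<\infty$. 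Combined with (1), this proves $\udim{R^\phi}{R}<\infty$.

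For (2)(b), set $C=Z(R)$ and $T=C^\phi=Z(R)^\phi$. By Proposition \ref{classics}, $Q(R)=RZ(R)^{-1}$ and $Z(Q(R))=Z(R)Z(R)^{-1}$ is a finite product of fields, so $C$ is a commutative semiprime Goldie ring and $\phi$ restricts to an automorphism, resp. a derivation, of $C$. Applying part (2)(a) to the ring $C$ with the operator $\phi|_C$ (note $C[x;\phi|_C]\subseteq W$ is PI) yields $\udim{T}{C}<\infty$. Now I would exploit the standing hypothesis that regular elements of $T$ remain regular in $C$: writing $\mathcal C_T$ for these and $D:=C\mathcal C_T^{-1}=Z(R)(Z(R)^\phi)^{-1}$, the module $C$ is $\mathcal C_T$-torsion-free, so Lemma \ref{udim and localizations} gives $\udim{Q(T)}{D}<\infty$, where $Q(T)=T\mathcal C_T^{-1}$ is a finite product of fields ($T$ being a commutative semiprime ring of finite uniform dimension). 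Decomposing $D=\bigoplus_i e_iD$ along the idempotents $e_i$ of $Q(T)\subseteq D$, each factor $e_iD$ is a reduced commutative algebra of finite dimension over a field, hence a finite product of fields; therefore $D$ itself is a finite product of fields. Being its own total quotient ring and lying between $C$ and $Q(C)$, it follows that $D=Q(C)=Z(R)Z(R)^{-1}$, which is the first asserted equality. The second then follows by extension of scalars: $R(Z(R)^\phi)^{-1}=R\otimes_{Z(R)}D=R\otimes_{Z(R)}Z(R)Z(R)^{-1}=RZ(R)^{-1}=Q(R)$.

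I expect the main obstacle to be (2)(b): specifically, converting the finiteness $\udim{T}{C}<\infty$ into the statement that the localization $D$ is actually semisimple, i.e. a finite product of fields. The hypothesis that regular elements of $Z(R)^\phi$ stay regular in $Z(R)$ is precisely what permits localization at $\mathcal C_T$ without collapsing $C$, and the passage through Lemma \ref{udim and localizations}, which reduces finite uniform dimension to finite vector-space dimension over each residue field, is the step requiring the most care.
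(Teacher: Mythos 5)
Your proposal is correct and takes essentially the same route as the paper: part (1) rests, as in the paper, on the fact that a central polynomial has all coefficients in $R^\phi$ (you verify the lifted independence of $u_1Z,\dots,u_nZ$ directly, where the paper passes through $M\mapsto M[x]$ and the intermediate ring $R^\phi[x]$ -- a cosmetic difference); part (2)(a) is the paper's argument (semiprime Goldie, $Q(W)=WZ^{-1}$, Lemma \ref{udim and localizations}, Proposition \ref{classics}), with your torsion-freeness justification making explicit what the paper leaves implicit; and part (2)(b) follows the paper's scheme of applying (2)(a) to $Z(R)$, localizing at the regular elements of $T=Z(R)^\phi$, and decomposing $D$ along the idempotents of $Q(T)$ to conclude $D$ is a finite product of fields, hence all of $Z(R)Z(R)^{-1}$. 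The one spot you assert rather than prove is that $Q(T)$ is a finite product of fields because $T$ is commutative semiprime of finite uniform dimension: this is true (in a reduced commutative ring an infinite strictly ascending chain of annihilators produces an infinite orthogonal family, so finite uniform dimension forces the ACC on annihilators and hence, by Proposition \ref{classics}, a semisimple quotient ring), but the paper gets it more cheaply by noting that the ACC on annihilators is hereditary to the subring $T\subseteq R$, so in a written version you should either supply that small argument or switch to the hereditariness observation.
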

\begin{proof} $(1)$.  For $f=\sum_{i=0}^na_ix^i\in Z$, let $\phi(f)$
denote $\sum_{i=0}^n\phi(a_i)x^i$.  Then, $0=[x,f]=(\phi(f)-f)x$,
if $\phi$ is an
 automorphism of $R$. If $\phi$ is a
 derivation of $R$, then $0=[ x,f]=\phi(f)$. The above yields that  $Z
\subseteq R^{\phi}[x]$.

For any element $r\in R^{\phi}$ we have $xr=rx$. Therefore, if $M$
is a right $R^\phi$-submodule of $R$, then $M[x]$ is a right
$R^\phi[x]$-submodule of $R[x;\phi]$. In particular, $M[x]$ has
also a structure of $Z$-module, as $Z\subseteq R^\phi[x]$. One can
easily  check that direct sums of $R^\phi$-submodules of $R$ lift
to  direct sums of  $R^\phi[x]$-submodules of $R[x;\phi]$.
Therefore  $\udim{R^{\phi}}{R}\leq \udim{R^{\phi}[x]}{R[x;\phi]}
\leq \udim{Z}{R[x;\phi]}$.

$(2)(a)$. Suppose that $R[x,\phi]$ is a \mbox{\rm PI} ring and the
coefficient ring $R$ is semiprime with the \mbox{\rm ACC}
condition on annihilators. Then $R$ also satisfies a polynomial
identity, so $R$ is a semiprime \mbox{\rm PI} left Goldie ring.
Therefore, by Lemma \ref{known facts}, $R[x,\phi]$ is a semiprime
Goldie \mbox{\rm PI} ring with a semisimple quotient ring
$Q(R[x,\phi])=R[x,\phi]Z^{-1}$.

 Making use of  the statement (1), Lemma \ref{udim and localizations}  and
 Proposition \ref{classics}, we obtain:
 $$\udim{R^{\phi}}{R}
\leq \udim{Z}{R[x;\phi]}=\udim{ZZ^{-1}}{Q(R[x;\phi])} < \infty .$$
This gives the statement $(a)$.

 (b). Suppose that  every regular element of $Z(R)^{\phi}$ is regular in $Z(R)$.
 That is $B=Z(R)(Z(R)^{\phi})^{-1}$ and
 $Z(R)^{\phi}(Z(R)^{\phi})^{-1}$ means localizations with respect
 the same Ore set of all regular elements of $Z(R)^\phi$.

 Notice   that,  in order to prove
the statement $(b)$, it is enough to show
 that $Z(R)Z(R)^{-1}=B$, as  $Q(R)=RZ(R)^{-1}$.
Since the ACC on annihilators is a hereditary condition on
subrings and $Z(R)$ is a reduced ring (i.e. $Z(R)$ does not
contain nontrivial nilpotent elements), $Z(R)^{\phi}$ is a
commutative reduced ring with the ACC on annihilators. Therefore,
its classical quotient ring
$A=Z(R)^{\phi}(Z(R)^{\phi})^{-1}\subseteq B$ is a finite product
of fields, say $A=\bigoplus_{i=1}^nK_i$, where $K_i=e_iA$ for
suitable primitive orthogonal idempotents, $1\leq i \leq n$.

 Recall that $\phi$ is either an automorphism or a derivation of
 $R$. Hence  $Z(R)$ is
stable by $\phi$ and we can consider the Ore extension
$Z(R)[x;\phi]$ and, since $R[x;\phi]$ is a \mbox{\rm PI} ring,
$Z(R)[x;\phi]$ is also a \mbox{\rm PI} ring . Now,  we can apply
the statement $(2)(a)$  to $Z(R)$ and get
$\udim{Z(R)^{\phi}}{Z(R)}<\infty$. Consequently, by Lemma
\ref{udim and localizations}, we get $\udim{A}{B}<\infty$. This
implies that, for any $1\leq i\leq n$, $e_iB$ is a finite
dimensional algebra over the field $K_i=e_iA$. Therefore every
regular element in $e_iB$ is invertible in $e_iB$. This, in turn,
implies that every regular element of $B=\bigoplus_{i=1}^ne_iB$ is
invertible in $B=Z(R)(Z(R)^{\phi})^{-1}$. This shows that $B$ is
equal to $Z(R)Z(R)^{-1}$ and completes the proof of the lemma.
\end{proof}
\begin{remark} Let us observe that if $R$ is a prime ring, then
the assumption of the statement (2b) from the above proposition is
always satisfied.

\end{remark}
\section{\sc \normalsize Prime Coefficient Ring}

In this section $R$ will stand for a prime \mbox{\rm PI} ring. We
first continue to gather some information on the behaviour of
$\si$ on the center.

\begin{prop} \label{inner auto}
 Let $R$ be a prime \mbox{\rm PI} ring and $\si$ an endomorphism of $R$ such
 that $\si^n$ is an automorphism of the center $Z(R)$ of $R$. Then:
\begin{enumerate}
  \item $\si$ extends uniquely to an automorphism of the
  localization $RZ(R)^{-1}$.
  \item Suppose additionally that  $\si^n|_{Z(R)}=\mbox{\rm id}_{Z(R)}$.
  Then there is $0\ne u\in Z(R)$ such that
   $\si(u)=u$,  $\si$ is an automorphism of
   $RS^{-1}$ and $\si^n$ is an inner automorphism of
   $RS^{-1}$, where $S=\{u^k\mid k\geq 0\}$.
  \end{enumerate}
\end{prop}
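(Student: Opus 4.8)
The plan is to transport the whole question to the simple artinian quotient ring $Q(R)=RZ(R)^{-1}$, whose existence and structure are guaranteed by Proposition \ref{classics}: for prime $R$ the ring $Q(R)$ is simple and finite dimensional over its center $k:=Z(R)Z(R)^{-1}$, the field of fractions of the domain $Z(R)$. The decisive external input will be the Skolem--Noether theorem applied to the central simple $k$-algebra $Q(R)$.

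For $(1)$, since $\si^n\vert_{Z(R)}$ is an automorphism it is in particular injective, hence so is $\si\vert_{Z(R)}$; by Lemma \ref{generalities on the center}$(2)$ the map $\si$ is then injective on all of $R$. Now Lemma \ref{generalities on the center}$(1)$ applies and yields that $\si\vert_{Z(R)}$ is an automorphism of $Z(R)$, so in particular $\si(Z(R))\subseteq Z(R)$. Because $R$ is prime, every nonzero central element is regular, so $\si$ carries the Ore set $Z(R)\setminus\{0\}$ bijectively onto itself; consequently $\si$ extends uniquely to an injective endomorphism of $Q(R)$, whose restriction to $k$ is the automorphism of $k$ induced by $\si\vert_{Z(R)}$. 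As $Q(R)$ is simple and finite dimensional over $k$, Lemma \ref{generalities on the center}$(3)$ forces this extension to be an automorphism of $Q(R)$, proving $(1)$.

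For $(2)$, under the extra hypothesis $\si^n\vert_{Z(R)}=\mathrm{id}$ the automorphism $\si^n$ of $Q(R)$ fixes $k$ pointwise, so by Skolem--Noether it is inner, say $\si^n(y)=qyq^{-1}$ for some invertible $q\in Q(R)$. Writing $q=c^{-1}a$ with $a\in R$ and $0\neq c\in Z(R)$, the central factor cancels, so conjugation by $a$ already realizes $\si^n$; moreover $a$ is invertible in $Q(R)$, so $a^{-1}=d^{-1}b$ for some $b\in R$, $0\neq d\in Z(R)$, whence $ab=ba=d\in Z(R)$. I then set $u:=d\,\si(d)\cdots\si^{n-1}(d)$. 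Using $\si^n(d)=d$ and the commutativity of $Z(R)$ one checks immediately that $\si(u)=u$ and $u\neq 0$, while clearly $d\mid u$. Putting $S=\{u^k\mid k\geq 0\}$, the relation $\si(u)=u$ shows that $\si$ maps $RS^{-1}$ into itself, and $d\mid u$ makes $d$, hence $a$ via $a^{-1}=bd^{-1}$, a unit of $RS^{-1}$; therefore $\si^n$ is conjugation by the unit $a$ on $RS^{-1}$, i.e. an inner automorphism of $RS^{-1}$.

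It remains to see that $\si$ itself is onto $RS^{-1}$, and this is where I expect the only real subtlety, since $\si$ need not be surjective on $R$ and surjectivity must be recovered only after localizing. The point is that conjugation by the unit $a$ is a bijection of $RS^{-1}$, so $\si^n$ is an automorphism of $RS^{-1}$; since $\si$ is injective and $\si^n=\si\circ\si^{n-1}$ is onto, every $y\in RS^{-1}$ equals $\si(\si^{n-1}(z))$ for some $z$, giving surjectivity of $\si$. Thus $\si$ is an automorphism of $RS^{-1}$, completing the argument. The two places demanding care are the simultaneous choice of $u$, which must be $\si$-fixed and at the same time clear the denominator $d$ needed to invert $a$ (the norm $\prod_{i=0}^{n-1}\si^i(d)$ achieves both), and this final surjectivity step.
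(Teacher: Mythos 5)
Your proof is correct and follows essentially the same route as the paper: extend $\si$ to $Q(R)=RZ(R)^{-1}$ via Posner's theorem and Lemma \ref{generalities on the center}(3), apply Skolem--Noether to make $\si^n$ inner, and take $u$ to be the ``norm'' $d\,\si(d)\cdots\si^{n-1}(d)$ of the central denominator so that $u$ is $\si$-fixed and inverts the conjugating element in $RS^{-1}$. The only difference is that you spell out two steps the paper leaves implicit (the injectivity bootstrap in (1) and the surjectivity of $\si$ on $RS^{-1}$ via $\si^n=\si\circ\si^{n-1}$ being onto), both correctly.
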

\begin{proof} (1) By Lemma \ref{generalities on the center},
$\si$ is injective  and $\si(Z(R))=Z(R)$. Thus $\si$ has a unique
extension to the localization $RZ(R)^{-1}$ which, by Posner's
Theorem, is a simple,  finite dimensional algebra over the center
$Z(R)Z(R)^{-1}$.
 Now the statement (1) is a direct consequence of  Lemma
 \ref{generalities on the center}$(3)$.

 (2) By (1), $\si$ is an automorphism of $RZ(R)^{-1}$  and the theorem of Skolem-Noether
 implies that   $\si^n$ is an inner automorphism of $RZ(R)^{-1}$. Therefore,
  one can choose a
 regular element  $a\in R$ with the inverse $bv^{-1}\in RZ(R)^{-1}$ such that
 $\si^n(r)=arbv^{-1}$ for all $r\in R$.
  Let $u=v\si(v)\ldots \si^{n-1}(v)$. Then
 $\si(u)=u$, the element $bu^{-1}$ has the inverse $a\si(v)\ldots\si^{n-1}(v)$
 and also  determines $\si^n$. This
 yields the thesis.
\end{proof}

In the sequel we  will say that an automorphism $\si$ of a ring
$R$ is of finite inner order if $\si^n$ is an inner automorphism
of $R$, for some $n\geq 1$.

 At this point we make a small
digression not directly related to the main theme of the paper. It
is known (Cf. \cite{Jo}) that for any ring $R$ with a fixed
injective endomorphism $\si$ there exists a universal over-ring
$A$ of $R$, called a Jordan extension of $R$, such that $\si$
extends to an automorphism of $A$ and $A= \bigcup_{i=0}^\infty
\sigma^{-i}(R)$. In this case we will write $R\subseteq_\si A$.

 It is easy to check that
if $\si$ becomes an inner automorphism of $A$, then $R=A$. Also,
 if $R$ is a prime PI ring, then  $A$ is prime PI as well.

 Recall that
an automorphism of a prime \mbox{\rm PI} ring $R$ is $X$-inner if
and only if it becomes inner when extended to the classical
quotient ring $Q(R)=RZ(R)^{-1}$.

Suppose that $R$ is a prime \mbox{\rm PI} ring and $\si$ is an
endomorphism of $R$ such that $\si^n|_{Z(R)}=\mbox{id}|_{Z(R)}$.
Then, by Proposition \ref{inner auto}, $R\subseteq_{\si}A\subseteq
RS^{-1} $, where $S$ consists of powers of a single central
element. Moreover $\si$ is an $X$-inner automorphism of $A$. The
following example shows, that all inclusions $R\subseteq_{\si}
A\subseteq RS^{-1} $ can be strict.
  This example will be used again later in the
paper.

\begin{example}
 \label{example}
 Let $R=\left[\begin{array}{ll}
   \mathbb{Z}+x\mathbb{Q}[x] & \mathbb{Z}+x\mathbb{Q}[x] \\
   x\mathbb{Q}[x] &  \mathbb{Z}+x\mathbb{Q}[x] \
\end{array}\right]$
be a subring of $M_2(\mathbb{Q}[x])$ and $\si$ denote the inner
automorphism of $M_2(\mathbb{Q}[x])$ adjoint to the element
  $u=\left[ \begin{tabular}{cc}
 1 & 0 \\
  0 & 2 \\
\end{tabular}\right]$.
 Then
$\si(\left[\begin{tabular}{cc}

  $a$ & $b$ \\
  $c$ & $d$ \\
\end{tabular}\right])= \left[\begin{tabular}{cc}

  $a$ & $2b$ \\
  $\frac{1}{2}c$ & $d$ \\
\end{tabular}\right]$.
This means that the restriction of $\si$ to $R$ is an injective
endomorphism of $R$ which is not onto. One can check that
$R\subseteq_{\si} A=\left[\begin{tabular}{ll}
  $ \mathbb{Z}+x\mathbb{Q}[x]$ & $\mathbb{Z}[\frac{1}{2}]+x\mathbb{Q}[x] $\\
  $ x\mathbb{Q}[x]$ &$  \mathbb{Z}+x\mathbb{Q}[x]$ \
\end{tabular}\right]$  and $\si$ becomes an inner automorphism on
the localization $RS^{-1}$, where $S$ denotes the multiplicatively
closed  set generated by $2$.
\end{example}

\begin{definition}
Let $R$ be a ring, $\si$ an endomorphism of $R$ and $\de$ a
$\si$-derivation of $R$. We say that the center of the Ore
extension $\T{R}$ is nontrivial if it contains a nonconstant
 polynomial.
\end{definition}
\begin{lemma}\label{si identity on center} Let $\si$ be an injective endomorphism
of a prime ring $R$.
  Then:
  \begin{enumerate}
  \item If  $\Tn$ is a  \mbox{\rm PI} ring, then the center $Z(\T{R})$ is nontrivial.
  \item  Let
  $f=ax^n+...\in Z(R[x;\si,\de])$
  be a polynomial of degree $n\geq 1$.
  Then $a$ is a regular element of $R$,
  $\si^n|_{Z(R)}=\mbox{\rm id}_{Z(R)}$ and
   $\sigma\vert_{Z(R)}$ is an automorphism of $Z(R)$.
  \end{enumerate}

\end{lemma}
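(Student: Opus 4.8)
The plan is to treat the two parts essentially independently, using in both cases that $\T{R}$ is a prime ring (by Lemma \ref{known facts}(2), since $R$ is prime and $\si$ is injective) together with the structure of a prime PI ring recorded in Proposition \ref{classics}. For part (1), assume $\Tn$ is PI. Then $\T{R}$ is a prime PI Goldie ring, so its quotient ring $Q(\T{R})=\T{R}\,Z(\T{R})^{-1}$ is simple Artinian and \emph{finite-dimensional} over $K:=Z(Q(\T{R}))$, which is the field of fractions of the domain $Z(\T{R})$. I would argue by contradiction: a direct commutation computation (with an arbitrary $r\in R$ and with $x$) shows $Z(\T{R})\cap R = Z(R)^{\si,\de}=:C$, so if $Z(\T{R})$ contained only constants we would have $Z(\T{R})=C$ and $K=\mathrm{Frac}(C)$. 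Finite-dimensionality then forces $1,x,x^2,\dots$ to be $K$-linearly dependent; clearing denominators, which all lie in the central domain $C$, produces a nontrivial relation $\sum_i c_i x^i=0$ with $c_i\in C\subseteq R$ valid in $\T{R}$ (the localization map $\T{R}\hookrightarrow Q(\T{R})$ being injective lets the relation descend). This contradicts the freeness of $\{x^i\}$ as a left $R$-basis, so $Z(\T{R})$ must contain a nonconstant polynomial.

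For part (2), write $f=ax^n+(\text{lower})$ with $n\geq 1$. Comparing the degree-$n$ coefficients in $rf=fr$, using $x^i r=\si^i(r)x^i+(\text{lower})$, yields the key relation
\[
 a\,\si^n(r)=r\,a \qquad\text{for all } r\in R .
\]
The crux, and the step I expect to be the main obstacle, is proving that $a$ is regular. First I would show directly in $R$ that $a$ has zero left annihilator: if $ta=0$, the displayed relation gives $t(ra)=(ta)\si^n(r)=0$, so $t\cdot Ra=0$ and hence $t\cdot RaR=0$; since $RaR$ is a nonzero ideal of the prime ring $R$, this forces $t=0$. To upgrade this to genuine regularity I would pass to $Q(R)=RZ(R)^{-1}$, which is simple Artinian: a zero left annihilator in $R$ lifts to one in $Q(R)$ because the central denominators commute past $a$, and in a finite-dimensional simple algebra an element with zero left annihilator is a unit. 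Thus $a$ is invertible in $Q(R)$, hence regular in $R$.

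Finally, specializing the displayed relation to $r=z\in Z(R)$ gives $a\,\si^n(z)=za=az$, i.e. $a\bigl(\si^n(z)-z\bigr)=0$; regularity of $a$ then yields $\si^n(z)=z$, so $\si^n|_{Z(R)}=\mathrm{id}_{Z(R)}$. In particular $\si^n|_{Z(R)}$ is an automorphism of $Z(R)$, and since $\si$ is injective, Lemma \ref{generalities on the center}(1) lets me conclude that $\si|_{Z(R)}$ is an automorphism of $Z(R)$, completing the proof.
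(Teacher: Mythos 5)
Your proof is correct under the section's standing assumption that $R$ is a prime PI ring, but both halves take genuinely different routes from the paper, and in part (2) you quietly use strictly more than the lemma's stated hypotheses. For part (1) the paper argues more directly: since $\si$ is injective, $x$ is regular in the prime PI Goldie ring $\T{R}$, so the essential left ideal $\T{R}x$ contains a nonzero central element by Rowen's theorem, and any such element is automatically nonconstant (its constant term is $0$). Your alternative --- identifying $Z(\T{R})\cap R=Z(R)^{\si,\de}$, passing to $Q(\T{R})=\T{R}Z(\T{R})^{-1}$, and playing a $K$-linear dependence among the powers of $x$ against the freeness of $\{x^i\}$ as a left $R$-basis --- is sound: the clearing of central denominators and the descent through the injective localization map both check out. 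It is longer, but what it buys is that it avoids the essential-ideal version of Rowen's theorem, needing only the Posner/Kaplansky finite-dimensionality recorded in Proposition \ref{classics}.

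The caveat concerns part (2). The lemma is stated for an arbitrary prime ring $R$, and the paper's proof of regularity of $a$ is purely internal to $R$: besides $ra=a\si^n(r)$ (from $rf=fr$), it extracts the extra relation $\si(a)=a$ from $xf=fx$, which you never use. Left regularity is exactly your argument, and right regularity then follows elementarily: if $ab=0$, then $0=\si^n(ab)=\si^n(a)\si^n(b)=a\si^n(b)=ba$, whence $b=0$ by the left-regularity already proved. Your route instead passes to $Q(R)=RZ(R)^{-1}$ simple Artinian, i.e.\ invokes Posner's theorem, which requires $R$ to be PI; for a non-PI prime ring (where the lemma, as stated and as proved in the paper, still holds) the central localization need not be Artinian --- $Z(R)$ may already be a field with $Q(R)=R$, as for a free algebra --- so your step ``zero left annihilator implies unit'' is unavailable there. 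Since the section declares $R$ prime PI and every application of the lemma in the paper is to PI rings, your argument suffices for the paper's purposes; but as a proof of the lemma as literally written it has a gap at the regularity step, and the one-line trick with $\si(a)=a$ is precisely what closes it without any PI hypothesis.
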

\begin{proof}
  (1).  Suppose that $\Tn$ is a  \mbox{\rm PI} ring. Then Lemma \ref{known facts}
 and Proposition  \ref{classics} imply that $\T{R}$ is a prime \mbox{\rm PI} ring. Thus
 every essential one-sided
 ideal contains a nonzero central element. Since $\si$ is injective,
 the element $x$ is regular in $\T{R}$.
 Therefore $\T{R} x$ contains a nonzero central element $f=ax^n+a_{n-1}x^{n-1}+\ldots
 +a_1x$, where $a\ne 0$ and $n\geq 1$  and (1) follows.

 (2)  Let
  $f=ax^n+...\in Z(R[x;\si,\de])$ be
  such that $a\not= 0$ and $n\ge1$.
  Making use of  $xf=fx$ and $rf=fr$, for any $r\in R$,
   we obtain $\si(a)=a$ and $ra=a\si^n(r)$, for any $r\in R$.

  We claim that $a$ is a regular element in $R$. Indeed, if $b\in
  R$ is such that $ba=0$, then $bRa=ba\si^n(R)=0$. Hence $b=0$, as $R$ is a prime
  ring. Thus $a$ is left regular. If $ab=0$, then
  $0=\si^n(a)\si^n(b)=a\si^n(b)=ba$. Since $a$ is left regular,
  $b=0$ follows.

  Now, for any $z\in Z(R)$ we have $az=za=a\si^n(z)$. Thus
  $a(z-\si^n(z))=0$ and $\si^n(z)=z$, for any $z\in Z(R)$,
 follows. The last assertion of (2) is then a consequence
 of Lemma \ref{generalities on the center}(1).
\end{proof}
\begin{prop}\label{prime PI endoporphism}
 Let $R$ be a prime \mbox{\rm PI} ring and $\si$ an injective endomorphism of $R$. The
 following conditions are equivalent:
\begin{enumerate}
  \item $R[x;\si]$ is a \mbox{\rm PI} ring.
  \item $\si|_{Z(R)}$ is an automorphism of $Z(R)$ of finite order.
  \item There exists  $0\ne u\in Z(R)$ such that $\si(u)=u$ and $\si$
   is an  automorphism of finite inner order of the localization $RS^{-1}$ , where
  $S$ denotes the set of all powers of $u$.
\end{enumerate}
\end{prop}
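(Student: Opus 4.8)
The plan is to prove the cycle of implications $(1)\Rightarrow(2)\Rightarrow(3)\Rightarrow(1)$. The first implication is essentially packaged in the earlier lemmas. Assuming $R[x;\si]$ is \mbox{\rm PI}, Lemma \ref{si identity on center}(1) produces a nonconstant central polynomial $f=ax^n+\dots\in Z(R[x;\si])$ with $n\ge 1$, and then Lemma \ref{si identity on center}(2) tells us that $a$ is regular, that $\si^n|_{Z(R)}=\mathrm{id}_{Z(R)}$, and that $\si|_{Z(R)}$ is an automorphism of $Z(R)$. An automorphism whose $n$-th power is the identity has finite order, so $(2)$ follows at once. For $(2)\Rightarrow(3)$ I would simply invoke Proposition \ref{inner auto}: if $\si|_{Z(R)}$ is an automorphism of finite order dividing $n$, then $\si^n|_{Z(R)}=\mathrm{id}_{Z(R)}$, so the hypotheses of Proposition \ref{inner auto}(2) hold, and that proposition delivers exactly the data required by $(3)$ — a nonzero $\si$-fixed element $u\in Z(R)$, the fact that $\si$ becomes an automorphism of $RS^{-1}$ for $S=\{u^k\mid k\ge 0\}$, and the fact that $\si^n$ is inner on $RS^{-1}$, i.e.\ $\si$ has finite inner order there.

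The substance of the proposition is the implication $(3)\Rightarrow(1)$, and here I would first reduce to a simple Artinian coefficient ring. Since $R$ is prime \mbox{\rm PI} (hence semiprime Goldie) and $\si$ is injective, Proposition \ref{reduction to semisimple} reduces the \mbox{\rm PI}-ness of $R[x;\si]$ to that of $Q(R)[x;\si]$, where $D:=Q(R)=RZ(R)^{-1}$ is simple Artinian and, by Proposition \ref{classics}, finite dimensional over its centre $F:=Z(D)$. From $(3)$ the automorphism $\si$ extends to $D$ by Proposition \ref{inner auto}(1), with $\si^n$ inner, say $\si^n(r)=qrq^{-1}$ for a unit $q\in D$; in particular $\si|_F$ has finite order $d\mid n$, and by Artin's theorem $F/F^{\si}$ is a cyclic Galois extension with group generated by $\si|_F$, where $F^{\si}=\{a\in F\mid\si(a)=a\}$. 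The goal is then to exhibit a central polynomial in $D[x;\si]$ with invertible leading coefficient.

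Comparing $\si\circ\si^n$ with $\si^n\circ\si$ shows that $c:=q^{-1}\si(q)\in F$, so $\si(q)=qc$. A short computation shows that a monomial $vq^{-k}x^{kn}$ (with $v\in F$) is central in $D[x;\si]$ precisely when $\si(v)=c^kv$: commuting with $D$ forces $vq^{-k}\cdot q^k=v\in F$, and commuting with $x$ forces $\si(vq^{-k})=vq^{-k}$, which simplifies to $\si(v)=c^kv$. Now the relation $\si^n(q)=q$ forces $\prod_{i=0}^{n-1}\si^i(c)=1$, and since $\si^d|_F=\mathrm{id}$ this product equals $N_{F/F^{\si}}(c)^{\,n/d}$. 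Taking $k=n/d$, the element $c^k$ therefore has norm $N_{F/F^{\si}}(c^k)=N_{F/F^{\si}}(c)^{\,n/d}=1$, and Hilbert's Theorem~90 for the cyclic extension $F/F^{\si}$ supplies a solution $v\in F^{\times}$ of $\si(v)=c^kv$. This produces a central element $g=vq^{-k}x^{kn}\in Z(D[x;\si])$ of degree $kn$ whose leading coefficient $vq^{-k}$ is a unit of $D$.

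Once such a $g$ is available, $D[x;\si]$ is finitely generated as a module over the commutative central subring $F^{\si}[g]$: since $x^{kn}$ is a unit times the central element $g$, every power of $x$ is, modulo $F^{\si}[g]$, a left $D$-combination of $1,x,\dots,x^{kn-1}$, and $D$ is finite dimensional over $F^{\si}$, so the finite set $\{ex^i\}$, with $e$ running over an $F^{\si}$-basis of $D$ and $0\le i<kn$, generates $D[x;\si]$ over $F^{\si}[g]$. A ring that is module-finite over a commutative central subring is \mbox{\rm PI}, hence $Q(R)[x;\si]=D[x;\si]$ is \mbox{\rm PI}, and $(1)$ follows by Proposition \ref{reduction to semisimple}. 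I expect the construction of $g$ to be the main obstacle: the naive degree-$n$ monomial fails because $N_{F/F^{\si}}(c)$ need only be a root of unity, and one is forced to pass to the power $k=n/d$ to trivialise the norm before Hilbert~90 applies. Should one prefer to sidestep this, an alternative is to observe that on $RS^{-1}$ the map $\si$ is a genuine automorphism of finite order on the centre and to quote the Pascaud--Valette theorem \cite{PV} directly, after which $R[x;\si]\subseteq RS^{-1}[x;\si]$ is \mbox{\rm PI} as a subring of a \mbox{\rm PI} ring.
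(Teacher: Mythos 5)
Your proof is correct, and for the first two implications it coincides with the paper's: $(1)\Rightarrow(2)$ is read off from Lemma \ref{si identity on center}(1),(2), and $(2)\Rightarrow(3)$ is exactly Proposition \ref{inner auto}(2), just as in the text. Where you genuinely diverge is $(3)\Rightarrow(1)$. The paper never leaves the localization $RS^{-1}$: since $\si(u)=u$, the set $S$ is central in $R[x;\si]$ and $(R[x;\si])S^{-1}=RS^{-1}[x;\si]$; because $\si^n$ is inner on $RS^{-1}$, the subring $RS^{-1}[x^n]$ is isomorphic to the ordinary polynomial ring $RS^{-1}[y]$, hence \mbox{\rm PI}, and $RS^{-1}[x;\si]$ is a finitely generated free module over it, so it and its subring $R[x;\si]$ are \mbox{\rm PI} --- no central polynomial is ever produced. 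You instead pass to $Q(R)$ (this needs the small bridging observation, which you gloss but which is immediate, that an inner power of $\si$ fixes $Z(R)\subseteq Z(RS^{-1})$ pointwise, so Proposition \ref{inner auto}(1) applies and $\si^n=\mathrm{inn}_q$ transfers to $Q(R)$ by uniqueness of extensions) and manufacture an explicit central polynomial $vq^{-k}x^{kn}$ via the relation $\prod_{i=0}^{n-1}\si^i(c)=1$ for $c=q^{-1}\si(q)\in F$ and Hilbert 90 for the cyclic extension $F/F^{\si}$. Your computations check out (the centrality criteria, $N_{F/F^{\si}}(c^{n/d})=1$, the module-finiteness of $D[x;\si]$ over the central subring $F^{\si}[g]$), so the argument is complete; what it buys is strictly more than the proposition asserts, namely an explicit nonconstant central polynomial with invertible leading coefficient --- essentially condition (5) of Theorem \ref{main prime} in the case $\de=0$, with the degree bound $n^2/d$ --- at the cost of Galois-theoretic machinery the paper never needs here. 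Note, though, that your diagnosis that ``the naive monomial fails, so Hilbert 90 is forced'' holds only within your normalization: the paper's own standard trick (used in the proof of Proposition \ref{inner auto}(2) and again in Corollary \ref{C semisimple}) of replacing $n$ by $n^2$ and $q$ by $q\si(q)\cdots\si^{n-1}(q)$ --- which is $\si$-fixed because $\si^n(q)=q$ forces $q$ to commute with every $\si^i(q)$ --- yields a central monomial of degree $n^2$ with no Galois theory at all. Your fallback of quoting Pascaud--Valette on $RS^{-1}$, where $\si$ is an automorphism of finite order on the center, is also legitimate, but it outsources precisely the content that the paper's two-line argument proves directly.
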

\begin{proof}
 The implications $(1)\Rightarrow (2)$ and
 $(2)\Rightarrow (3)$ are  given by  Lemma \ref{si identity on
 center} and Proposition \ref{inner auto}, respectively.

 $(3)\Rightarrow (1)$. Suppose that (3) holds and let $\si^n$, $n\geq 1$, be an inner
 automorphism of $RS^{-1}$. By the choice of $S$, the set $S$ is
 central in $R[x;\si]$ and $(R[x;\si])S^{-1}=RS^{-1}[x;\si]$.
 Since $\si^n$ is an inner automorphism of $RS^{-1}$, the subring
 $RS^{-1}[x^n]\subseteq RS^{-1}[x;\si]$ is isomorphic to the usual
 polynomial ring $RS^{-1}[y]$, so it satisfies a polynomial
 identity, as $RS^{-1}$ is a \mbox{\rm PI} ring.  Now, the fact that
 $RS^{-1}[x;\si]$ is a finitely generated free module over the \mbox{\rm PI}
 subring $RS^{-1}[x^n]$ implies that $R[x;\si]$ satisfies a
 polynomial identity.
\end{proof}

In case $\si$ is an automorphism of $R$, the equivalence $(1)
\Leftrightarrow (2)$ in the above proposition was also obtained by
Pascaud and Valette (Cf. \cite{PV}) using another approach.

Before stating the next results we need to recall some definitions
(Cf.\cite{LM2}):
\begin{definition}
Let $R$ be a ring, $\si$ an endomorphism  and $\de$ a
$\si$-derivation of $R$, respectively. We say that:
\begin{enumerate}
\item   $\de$ is quasi
algebraic if there exists  $n\ge 1$ and elements $b, a_1,
\dots,a_{n}\in R$, with $a_n\not= 0$, such that
$\sum_{i=1}^na_i\de^i=\de_{b,\si^n}$ where $\de_{b,\si^n}$ denotes
the inner $\si^n$-derivation adjoint  to the element $b$, that is
 $\de_{b,\si^n}(r)=br-\si^n(r)b$, for any $r\in R$.
\item A polynomial $p \in \T{R}$ is right semi-invariant if for any
element $a\in R$ there exists $b\in R$ such that $pa=bp$.
\end{enumerate}
\end{definition}

\begin{theorem}\label{main prime}
 Suppose $\si$ is an injective endomorphism of a prime \mbox{\rm PI} ring
 $R$.  Let $Q(R)=RZ(R)^{-1}$ denote the classical quotient ring
 of $R$.  The following conditions are equivalent:
\begin{enumerate}
  \item $\T{R}$ is a \mbox{\rm PI} ring.

\item There exists a nonconstant central polynomial in $\T{R}$
  with a regular leading coefficient.
  \item The center of $\T{R}$ is nontrivial.
  \item The center of $\T{Q(R)}$ is nontrivial.
  \item There exists a nonconstant central polynomial in $\T{Q(R)}$
  with invertible leading coefficient.
  \item $Q(R)[x;\si ,\de]$ is a \mbox{\rm PI} ring.
  \item  $\si$ is an automorphism
    of $Q(R)$ of finite  inner order and
  $\de$ is a quasi algebraic $\si$-derivation of $Q(R)$.
  \item  $\si$ is an automorphism
    of $Q(R)$ of finite  inner order and
$Q(R)[x;\si ,\de]$ contains a monic nonconstant semi-invariant
polynomial.
\end{enumerate}
\end{theorem}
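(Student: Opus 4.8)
The plan is to prove the whole cluster by localizing to the simple Artinian quotient ring $Q(R)=RZ(R)^{-1}$ (which exists because a prime \mbox{\rm PI} ring is prime Goldie, by Proposition~\ref{classics}) and then transporting the resulting statements back to $R$. The anchor is $(1)\Leftrightarrow(6)$, which is precisely Proposition~\ref{reduction to semisimple} applied to the prime Goldie ring $R$. All of $(4),(5),(6),(7),(8)$ are assertions about $\T{Q(R)}$, while $(1),(2),(3)$ are their counterparts over $R$; so I would first dispose of the elementary links and then concentrate on the core analysis over $Q(R)$. For the elementary links: $(1)\Rightarrow(3)$ is Lemma~\ref{si identity on center}(1); $(2)\Leftrightarrow(3)$ is immediate in one direction, while the other follows since the leading coefficient of any nonconstant central polynomial is regular by Lemma~\ref{si identity on center}(2). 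For $(3)\Rightarrow(4)$ I would observe that a central $f\in\T{R}$ commutes with each $r\in R$ and with $x$, hence with every $c^{-1}$ for $c\in Z(R)$ regular (from $fc=cf$), so $f$ remains central in $\T{Q(R)}$.

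The equivalence $(4)\Leftrightarrow(5)$ is where simple Artinianity enters: $(5)\Rightarrow(4)$ is trivial, and conversely the leading coefficient of a nonconstant central polynomial of $\T{Q(R)}$ is regular by Lemma~\ref{si identity on center}(2) (applied to the prime ring $Q(R)$), hence invertible since $Q(R)$ is Artinian. Finally $(6)\Rightarrow(4)$ is Lemma~\ref{si identity on center}(1) over $Q(R)$. To close the loop among $(1)$--$(6)$ I would prove $(5)\Rightarrow(6)$ by a module-finiteness argument: given a central $f$ of degree $m$ with invertible leading coefficient, left division by $f$ reduces every element of $\T{Q(R)}$ to degree $<m$ modulo $(f)$, so $\T{Q(R)}$ is spanned by the elements $q\,x^i$ with $0\le i<m$ and $q$ ranging over a basis of $Q(R)$ over the fixed field $k_0=Z(Q(R))^{\si,\de}$. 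Since $Q(R)$ is finite dimensional over $Z(Q(R))$ (Proposition~\ref{classics}) and, as noted below, $Z(Q(R))$ is finite over $k_0$, this is a finite set; thus $\T{Q(R)}$ is finitely generated over the central commutative subring $k_0[f]\subseteq Z(\T{Q(R)})$, and a ring finite over a central commutative subring is \mbox{\rm PI}.

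The automorphism half of $(7)$ and $(8)$ is comparatively painless and I would extract it straight from the central polynomial: a nonconstant central polynomial of degree $m$ forces $\si^m\vert_{Z(R)}=\mbox{\rm id}_{Z(R)}$ by Lemma~\ref{si identity on center}(2), whence Proposition~\ref{inner auto}(1) makes $\si$ an automorphism of $Q(R)$ and the Skolem--Noether theorem makes $\si^m$ inner, so $\si$ is an automorphism of $Q(R)$ of finite inner order. This identifies the common first clause of $(7)$ and $(8)$.

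The genuinely technical part, and the main obstacle, is the remaining interface $(6)\Leftrightarrow(7)\Leftrightarrow(8)$, together with the dimension count $[Z(Q(R)):k_0]<\infty$ used above. These rest on the theory of semi-invariant polynomials and quasi-algebraic derivations over simple Artinian rings developed in \cite{LM2}: quasi-algebraicity of $\de$ is exactly what produces a monic semi-invariant polynomial, giving $(7)\Leftrightarrow(8)$, and a monic semi-invariant $p$ of degree $m$ satisfies $pr=\si^m(r)p$, so, with $\si^n$ inner (adjoint to an invertible $q$), the element $q^mp^n$ commutes with all of $Q(R)$. The delicate step I expect to be hardest is to upgrade such a $Q(R)$-centralizing polynomial to a genuinely central one with invertible leading coefficient, that is $(5)$; this upgrade and the finiteness $[Z(Q(R)):k_0]<\infty$ (equivalently, that a nontrivial center of $\T{Q(R)}$ is generated over $k_0$ by a single central polynomial) are the crux, and I would carry them out by invoking the structural results of \cite{LM2}.
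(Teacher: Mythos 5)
Most of your architecture matches the paper's: the elementary links $(1)\Rightarrow(3)$, $(2)\Leftrightarrow(3)$, $(3)\Rightarrow(4)$, $(4)\Leftrightarrow(5)$ via Lemma \ref{si identity on center} and Artinianity of $Q(R)$, the extraction of ``$\si$ automorphism of $Q(R)$ of finite inner order'' from a nonconstant central polynomial via Lemma \ref{si identity on center}(2), Proposition \ref{inner auto} and Skolem--Noether, and the delegation of the $(7)$--$(8)$ interface to \cite{LM2} are all exactly what the paper does (it cites Theorem 3.6 of \cite{LM2} in one line for the equivalence of $(4)$, $(7)$ and $(8)$, so your partial re-derivation of $q^{\pm m}p^n$ centralizing $Q(R)$ is harmless extra detail, modulo an inverted exponent). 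Anchoring $(1)\Leftrightarrow(6)$ on Proposition \ref{reduction to semisimple} instead of closing the cycle with the trivial $(6)\Rightarrow(1)$ is a legitimate, even slightly cleaner, variant.

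The genuine gap is your proof of $(5)\Rightarrow(6)$. You generate $\T{Q(R)}$ as a module over the central subring $k_0[f]$, where $k_0=Z(Q(R))^{\si,\de}$, and for finiteness of this module you need $[Z(Q(R)):k_0]<\infty$. That finiteness does \emph{not} follow from $(5)$ by anything established at this point: it is essentially the content of Theorem \ref{prime caracterization with udim} (condition $(1)\Rightarrow(2)$), which the paper proves \emph{later}, using Theorem \ref{main prime} and Theorem \ref{prime case}; deferring it to unspecified ``structural results of \cite{LM2}'' either imports a result you have not identified or runs in a circle. The paper's proof of $(5)\Rightarrow(6)$ is designed precisely to avoid this: since $f$ is central of degree $n$ with invertible leading coefficient, the subring $Q(R)[f]$ is isomorphic to the ordinary polynomial ring $Q(R)[y]$, hence \mbox{\rm PI}, and $\T{Q(R)}$ is a free $Q(R)[f]$-module with basis $1,x,\dots,x^{n-1}$ (here one uses that $\si$ is an automorphism of $Q(R)$, already secured from Lemma \ref{si identity on center}(2) and Proposition \ref{inner auto}); no finiteness of $Z(Q(R))$ over the constants is ever needed. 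Replacing your $k_0[f]$-argument by this freeness argument over $Q(R)[f]$ repairs the step and makes the whole proof self-contained up to the single citation of Theorem 3.6 of \cite{LM2}.
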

\begin{proof}
 The implication $(1)\Rightarrow (2)$ is given by Lemma \ref{si identity on
 center}. The implication $(2)\Rightarrow (3)$ is clear.

 $(3)\Rightarrow (4)$. Suppose that the center of $\Tn$ is
 nontrivial. Then, by Lemma \ref{si identity on
 center}(2), $\si(Z(R))=Z(R)$. This means that we can
 extend uniquely $\si$ and $\de$ to an endomorphism and  a
 $\si$-derivation of $Q(R)=RZ(R)^{-1}$, respectively, and consider
  the over-ring $Q(R)[x;\si,\de]$ of $\Tn$.
 It is standard to check that $Z(\Tn)\subseteq Z(\T{Q(R)})$.
 This shows that the center of $\T{Q(R)}$ is nontrivial.

 The implication $(4)\Rightarrow (5)$ is given by Lemma \ref{si identity on
 center}(2) applied to the ring $Q(R)$ and the fact that every regular element
 of $Q(R)$ is invertible.

$(5)\Rightarrow (6)$.
  Let $f\in \T{Q(R)}$ be a central
  polynomial of degree $n\geq 1$ with an invertible leading
  coefficient. Then the subring $Q(R)[f]\subseteq \T{Q(R)}$ is
  isomorphic to the usual polynomial ring $Q(R)[y]$ in one
  indeterminate $y$, so the ring $Q(R)[f]$ satisfies a polynomial identity.
  Notice also that, due to Lemma \ref{si identity on
 center}(2) and Proposition \ref{inner auto}, $\si$ is an
 automorphism of $Q(R)$.
  Now, the fact that $\T{Q(R)}$ is a free module over $Q(R)[f]$
  with basis $1, x, \ldots, x^{n-1}$ implies that $\T{Q(R)}$ is a
  \mbox{\rm PI} ring.

The implication $(6)\Rightarrow (1)$ is obvious.

The above shows that conditions $(1)\div (6)$ are equivalent and
that the extension of $\si$ to $Q(R)$ is an automorphism of
$Q(R)$.

Now the equivalence of statements $(4),(7)\,\mbox{\rm and}(8)$ is
given by Theorem 3.6 in \cite{LM2}.
\end{proof}
\begin{remark}\label{rem 1}
 (1). Notice that due to Proposition \ref{inner auto}(2), the assumption in (7)
 and (8) of the above theorem that  $\si^n$ is an inner automorphism
 of $Q(R)$, for some $n\geq 1$, can be replaced by a condition
 that $\si|_{Z(R)}$ is an automorphism of $Z(R)$
 of finite order.

 (2). One can also
 replace the ring $Q(R)$ in the above theorem by a  localization
$RS^{-1}$ where $S$ denotes a multiplicatively closed set
consisting of all powers of a suitably chosen central
$\si$-invariant element.
\end{remark}

Theorem \ref{main prime} says that if $\Tn$ is a PI ring, then the
extension of  $\si$ to $Q(R)$ has to be an automorphism of $Q(R)$.
It is not a surprise as, by Corollary D\cite{BH}, every
endomorphism of a semiprime Noetherian PI ring $R$ which is
identity on $Z(R)$ is always an automorphism of $R$. Nevertheless,
when $R$ is a prime PI ring (so it satisfies the ACC on
annihilators), then the injective endomorphism $\si$ of $R$ does
not have to be onto when $\Tn$ is a PI ring.

\begin{example}
 Let $R$ and $\si$ be as in Example \ref{example}. Then $R$ is a
 prime ring, the injective endomorphism  $\si$ is not onto
 and $R[y;\si]$ satisfies
 a polynomial identity, since $R[y;\si]\subseteq M_2(Q[x])[y;\si]\simeq M_2(Q[x])[z]$.
\end{example}

\begin{theorem}\label{prime case}
Let $R$ be a prime ring, $\si$ an injective endomorphism and $\de$
a $\si$-derivation of $R$. Then $\T{R}$ is a {\rm PI} ring if and
only if $R$ is {\rm PI}, $\si|_{Z(R)}$ is an automorphism of
$Z(R)$ of finite order and one of the following conditions holds:
\begin{enumerate}
\item If $\mbox{\rm char}R=0$,
   $$Q(R)[x;\si,\de]\simeq \left\{
   \begin{array}{lll}
    Q(R)[x;\si] & if &  \si|_{Z(R)}\ne \mbox{\rm id}_{Z(R)}\\
      Q(R)[x] & else &\\
   \end{array} \right. $$
\item If $\mbox{\rm char}R=p\ne 0$,
   $$Q(R)[x;\si,\de]\simeq \left\{
   \begin{array}{lll}
    Q(R)[x;\si] & if &  \si|_{Z(R)}\ne \mbox{\rm id}_{Z(R)}\\
      Q(R)[x;d] & else &\\
   \end{array} \right. $$
   where $d$ is a suitable derivation of $Q(R)=RZ(R)^{-1}$ such that:
\begin{enumerate}
  \item $d(R)\subseteq R$
  \item there exist elements
   $q_l=1,\ldots, q_1\in Q(R)$ such that $\sum_{i=0}^{l}q_id^{p^i}$
   is an inner derivation of $Q(R)$
 \end{enumerate}
\end{enumerate}
\end{theorem}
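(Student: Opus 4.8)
The plan is to push everything up to the central simple algebra $Q(R)$ and then classify the resulting Ore extension up to isomorphism. By Theorem \ref{main prime}, $\T{R}$ is PI if and only if $\T{Q(R)}$ is PI, and in that case $R$ is PI, the restriction $\si|_{Z(R)}$ is a finite order automorphism of $Z(R)$ (Lemma \ref{si identity on center}), and $\si$ extends to an automorphism of finite inner order of $Q(R)$. By Posner's theorem $Q(R)$ is a central simple algebra over the field $C=Z(Q(R))=Z(R)Z(R)^{-1}$, and $\si|_{Z(R)}=\mbox{\rm id}_{Z(R)}$ exactly when $\si|_C=\mbox{\rm id}_C$. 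Hence the two displayed necessary conditions are immediate, and what remains is to identify $\T{Q(R)}$, up to isomorphism, with one of the listed normal forms, and conversely to verify that each of these is PI.

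First I would treat the case $\si|_C\ne \mbox{\rm id}_C$. Choosing $z\in Z(R)$ with $c:=\si(z)-z\ne 0$, the element $c$ is a unit of the field $C$; writing out the $\si$-derivation identity $\de(zr)=\de(rz)$ for the central element $z$ and solving for $\de(r)$ gives $\de(r)=br-\si(r)b$ with $b=-c^{-1}\de(z)$, so $\de=\de_{b,\si}$ is an inner $\si$-derivation of $Q(R)$. The substitution $x\mapsto x-b$ then produces the isomorphism $\T{Q(R)}\simeq \A{Q(R)}$, valid in every characteristic; this gives the first branch of both (1) and (2).

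Next comes the case $\si|_C=\mbox{\rm id}_C$. By Skolem--Noether $\si$ is the inner automorphism $r\mapsto ara^{-1}$ for a unit $a\in Q(R)$, and the substitution $y=a^{-1}x$ converts $\T{Q(R)}$ into a derivation--type extension $Q(R)[y;d_0]$, where $d_0=a^{-1}\de$ is a genuine derivation of $Q(R)$. If $\mbox{\rm char}\,R=0$ I would argue that PI-ness forces $d_0|_C=0$, since $d_0$ stabilizes $C$ and the skew polynomial ring $C[y;d_0|_C]$ over the field $C$ by a nonzero derivation is not PI in characteristic $0$, while it sits inside the PI ring $Q(R)[y;d_0]$. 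A $C$-linear derivation of a central simple algebra is inner, so $d_0$ can then be removed by a further shift $y\mapsto y-b'$, yielding $\T{Q(R)}\simeq Q(R)[x]$. If $\mbox{\rm char}\,R=p\ne 0$ the extension is already of the form $Q(R)[x;d]$; the quasi-algebraicity of $d_0$ (inherited from that of $\de$ via Theorem \ref{main prime}(7)) translates—using that in characteristic $p$ the $p$-th power of a derivation is again a derivation—into a relation $\sum_{i=0}^{l}q_id^{p^i}\in\mbox{\rm Inn}(Q(R))$ with $q_l=1$, which is condition (b), while condition (a), $d(R)\subseteq R$, is arranged by correcting $d_0$ by a suitable inner derivation.

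For the converse I would check that each normal form is PI and then invoke Theorem \ref{main prime} to pull PI-ness back from $\T{Q(R)}$ to $\T{R}$: $Q(R)[x]$ is PI because $Q(R)$ is; $\A{Q(R)}$ is PI by Proposition \ref{prime PI endoporphism}, since $\si|_C$ has finite order; and $Q(R)[x;d]$ satisfying (a) and (b) is PI by the quasi-algebraic criterion of Theorem \ref{main prime}(7) applied with $\si=\mbox{\rm id}$. I expect the characteristic $p$ step to be the main obstacle: producing a single derivation $d$ that simultaneously stabilizes $R$ and records the quasi-algebraic data in the $p$-power form (b) is delicate, because the unit $a$ implementing $\si$ need not lie in $R$, so the naive choice $d_0=a^{-1}\de$ need not preserve $R$. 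This is where the bulk of the bookkeeping, and the characteristic $p$ machinery of \cite{LM2}, will be required.
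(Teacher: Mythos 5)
Your route is essentially the paper's: reduce to $Q(R)$ via Theorem \ref{main prime}, split on whether $\si$ fixes the center, kill $\de$ by the inner $\si$-derivation adjoint to $b=(z-\si(z))^{-1}\de(z)$ when $\si|_{Z(R)}\ne \mbox{\rm id}_{Z(R)}$, and in the identity case use Skolem--Noether to pass to a derivation-type extension before invoking the characteristic-$p$ machinery of \cite{LM2} (the paper cites Proposition 2.8 and Lemma 2.3 there; that citation is exactly the ``translation'' you wave at for condition (b), so you are on equal footing with the paper on that point). Your characteristic-zero shortcut --- PI-ness forces $d_0|_C=0$ because $C[y;d_0|_C]$ over a field of characteristic $0$ with nonzero derivation is simple and infinite-dimensional over its center, hence not PI, and then a $C$-linear derivation of the central simple algebra $Q(R)$ is inner and can be shifted away --- is correct and somewhat more self-contained than the paper's appeal to \cite{LM2} for that case.

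The genuine gap is exactly where you flag it: condition (a), $d(R)\subseteq R$, and the mechanism you propose would fail. Correcting $d_0=a^{-1}\de$ by an inner derivation replaces it by $r\mapsto a^{-1}\de(r)-(qr-rq)$, and there is no reason any choice of $q\in Q(R)$ makes this land in $R$ for all $r\in R$; moreover, having first derived (b) for $d_0$, any subsequent repair of (a) would force you to recheck (b) for the modified derivation. The paper's fix is a one-line central rescaling, not a shift: since $Q(R)=RZ(R)^{-1}$, write the implementing unit as $c^{-1}=uz^{-1}$ with $u\in R$ and $z\in Z(R)$; as $z$ is central, $\si(r)=crc^{-1}=u^{-1}ru$, so $u\in R$ itself implements $\si$, and $d:=u\de$ is then an honest derivation with $d(R)=u\de(R)\subseteq R$ automatically, the isomorphism $Q(R)[x;\si,\de]=Q(R)[ux;\mbox{\rm id},u\de]\simeq Q(R)[x;d]$ being given by the substitution $x\mapsto ux$. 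Condition (b) is then derived for this already-normalized $d$ (Theorem \ref{main prime}(8) gives a monic nonconstant semi-invariant polynomial in the PI ring $Q(R)[x;d]$, and \cite{LM2} converts it into the $p$-power relation with $q_l=1$). As written, your proof does not establish case (2)(a), so the argument is incomplete at precisely the step you yourself identified as the obstacle; the repair is short but it is a different operation from the one you sketch.
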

\begin{proof} Suppose $R[x;\si,\de]$ is a \mbox{\rm PI} ring. Then $R$ is \mbox{\rm PI}
and Theorem \ref{main prime} and Remark \ref{rem 1} show that
$\T{Q(R)}$ is a \mbox{\rm PI} ring, $\si$ is an automorphism of
$Q(R)$ and $\si|_{Z(R)}$ is an automorphism of the center $Z(R)$
of finite order.

 Suppose that $\si|_{Z(R)}\ne \mbox{\rm id}|_{Z(R)}$ and let $c\in Z(R)$ be
 such that $\si(c)\ne c$. Then it is well-known that $\de$ is an
 inner $\si$-derivation of $Q(R)$ adjoint to the element
 $a=(c-\si(c))^{-1}\de(c)$.
 Then $Q(R)[x;\si,\de]=Q(R)[x-a;\si]\simeq Q(R)[x;\si]$.

  Suppose $\si|_{Z(R)}= \mbox{\rm id}_{Z(R)}$.
 Then, by Proposition
\ref{inner auto}(2), $\si$ is an inner automorphism of $Q(R)$, say
induced by an invertible element $c\in Q(R)$, i.e.
$\si(r)=crc^{-1}$, for any $r\in Q(R)$. Since $Q(R)$ is a central
localization of $R$ we can write $c^{-1}=uz^{-1}$ for some $u\in
R$ and $z\in Z(R)$ and we have $\si(r)=u^{-1}ru$ for all $r\in Q$.
Then $u\de=d$ is a derivation of $Q(R)$ such $d(R)\subseteq R$ and
  $Q(R)[x;\si,\de]=Q(R)[ux;\mbox{\rm id},u\de]\simeq Q(R)[x;d]$.

  Applying Theorem \ref{main prime} to the \mbox{\rm PI}  ring
  $Q(R)[x;d]$ we know that this ring contains a monic nonconstant
  semi-invariant polynomial. Now the thesis is a consequence of
  Proposition 2.8 and Lemma 2.3 from \cite{LM2} and the fact that
  $Q(R)[x;d]$ is isomorphic to $Q(R)[x]$, provided $d$ is an inner derivation.

  Conversely, if $R$ is a prime \mbox{\rm PI} ring and $\si^n$ is the
  identity on the center, then Theorem \ref{main prime}(8), Remark \ref{rem 1}
  and the hypothesis
  made on $Q(R)[x;\si,\de]$ shows that $R[x,\si,\de]$ is \mbox{\rm PI}.
\end{proof}

\begin{remark}
 In case $\si$ is an automorphism of $R$, the statement (1) from
the above Theorem is exactly the result of Jondrup \cite{Jon}, see
also the book by Goodearl and Brown \cite{BGO}.
\end{remark}

We have seen that the center $Z(R)$ of $R$ plays a crucial role in
determining if an Ore extension satisfies a polynomial identity.
This theme will be pursued further in the next  result and in
Section 3.

For any subring $A$ of $R$, $A^{\si,\de}$ will denote the
 the subalgebra of $(\si,\de)$-constants, i.e.
 $A^{\si,\de}=\{a\in A\mid \si(a)=a\mbox{ and } \de(a)=0\}$. Notice that we do not
 require that $A$ is  $\si$ or $\de$ stable.

With the above notations we have :

\begin{theorem}
\label{prime caracterization with udim} Suppose that $\si$ is an
injective endomorphism  of a prime {\rm PI} ring $R$ with the
center $Z$. Let $K$ denote the field of fractions of
$Z^{\si,\de}$. The following conditions are equivalent:
\begin{enumerate}
  \item $\T{R}$ is a  {\rm PI} ring.
  \item $\udim{Z^{\si,\de}}{Z}<\infty$
  \item $\dim_{K}Q(R)<\infty$ and $Q(R)=R(Z^{\si,\de})^{-1}$.
  \item $\dim_{K}Q(R)<\infty$.
  \end{enumerate}
\end{theorem}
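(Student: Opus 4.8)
The plan is to show that conditions $(2)$, $(3)$ and $(4)$ are all equivalent to the single numerical condition $\dim_K F<\infty$, where $F=Z(Q(R))$ is the field of fractions of $Z$ (note that $K\subseteq F$, since every nonzero element of $Z$ is regular in the prime ring $R$ and hence invertible in $Q(R)=RZ^{-1}$), and then to close the circle by proving $(4)\Rightarrow(1)$ and $(1)\Rightarrow(2)$. For the equivalences among $(2)$--$(4)$ I would argue purely with commutative algebra together with Posner's theorem. By Proposition \ref{classics}, $Q(R)$ is finite dimensional over $F$, so the tower relation $\dim_K Q(R)=\dim_K F\cdot\dim_F Q(R)$ shows $(4)\Leftrightarrow\dim_K F<\infty$. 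For $(2)$ I would apply the commutative case of Lemma \ref{udim and localizations} to the domain $Z^{\si,\de}$ and its torsion-free module $Z$: this gives $\udim{Z^{\si,\de}}{Z}=\dim_K L$, where $L=Z(Z^{\si,\de})^{-1}\subseteq F$. Since $L$ is a domain whose field of fractions is $F$, finiteness of $\dim_K L$ forces $L$ to be a field and hence $L=F$; thus $(2)\Leftrightarrow\dim_K F<\infty$, and when finite $L=F$, which immediately yields $R(Z^{\si,\de})^{-1}=RL=RF=Q(R)$ and so $(3)$. As $(3)\Rightarrow(4)$ is trivial, this settles $(2)\Leftrightarrow(3)\Leftrightarrow(4)$.

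For $(4)\Rightarrow(1)$ I would use a central-subring argument. First one checks $K\subseteq Q(R)^{\si,\de}$: the elements of $Z^{\si,\de}$ are central, $\si$-invariant and $\de$-killed, and a short computation shows the same holds for their inverses, so $\si$ and $\de$ are $K$-linear and $x$ commutes with $K$. Consequently $K[x]$ is a commutative (hence \mbox{\rm PI}) central subring of $Q(R)[x;\si,\de]$. Choosing a $K$-basis $e_1,\dots,e_m$ of $Q(R)$ (finite by $(4)$) and writing the coefficients of a general element in this basis, one verifies that $Q(R)[x;\si,\de]=\sum_{k=1}^m K[x]e_k$, so it is finitely generated as a module over $K[x]$. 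A ring that is a finitely generated module over a commutative central subring is itself \mbox{\rm PI}, so $Q(R)[x;\si,\de]$ is \mbox{\rm PI}, and therefore so is its subring $\T{R}$.

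Finally, for $(1)\Rightarrow(2)$ I would pass to the quotient ring and untwist. By Theorem \ref{main prime}, $(1)$ gives that $Q(R)[x;\si,\de]$ is \mbox{\rm PI} and that $\si$ restricts to an automorphism of finite order on $Z$; by Theorem \ref{prime case} (via the isomorphisms recorded there) $Q(R)[x;\si,\de]\simeq Q(R)[x;\phi]$ with $\phi$ either an automorphism or a derivation of $Q(R)$, and $\phi$ preserves the center $F$. Restricting to the \mbox{\rm PI} subring $F[x;\phi\vert_F]$ and applying Lemma \ref{the untwisted cases}$(2)(a)$ to the field $F$ yields $\dim_{F^{\phi}}F=\udim{F^{\phi}}{F}<\infty$. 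It then remains to identify $F^{\phi}$ with $K$, which I would do case by case: when $\si\vert_Z\ne\mbox{\rm id}$ one has $\phi=\si$ and $Z^{\si,\de}=Z^{\si}$, and a symmetrization over the finite $\si$-orbit shows $F^{\si}=\mbox{\rm Frac}(Z^{\si})=K$; when $\si\vert_Z=\mbox{\rm id}$ one has $\phi=d$ a derivation with $Z^{\si,\de}=\{z\in Z\mid d(z)=0\}$, and in characteristic $p$ the inclusion $F^p\subseteq F^d$ lets one clear denominators by $p$-th powers to obtain $F^{d}=\mbox{\rm Frac}(Z^{d})=K$ (in characteristic $0$ the derivation $d$ is inner on $Q(R)$, so $F^d=F=K$). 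Hence $\dim_K F<\infty$, which is $(2)$. The main obstacle is exactly this last identification $F^{\phi}=K$: one must show that the field of $\phi$-invariants of $F$ is generated, as a field, by the $(\si,\de)$-constants lying in $Z$, which is where the finite-order (respectively inseparability) structure of $\phi$ is genuinely used; in addition, the reduction to the untwisted case and the bookkeeping of how $Z^{\si,\de}$ behaves under the isomorphisms of Theorem \ref{prime case} require some care.
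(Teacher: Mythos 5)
Your proposal is correct, and while it shares the paper's skeleton for $(2)\Rightarrow(3)\Rightarrow(4)\Rightarrow(1)$ -- the paper likewise uses Lemma \ref{udim and localizations} to get $\dim_K Z(Z^{\si,\de})^{-1}=\udim{Z^{\si,\de}}{Z}$, concludes that this localization is a field and hence equals $ZZ^{-1}$, and proves $(4)\Rightarrow(1)$ by exhibiting $\T{Q(R)}$ as a finitely generated module over the central polynomial ring $K[x]$ -- it takes a genuinely different route through the key implication $(1)\Rightarrow(2)$. The paper never touches the fixed field $F^{\phi}$ of $F=Z(Q(R))$: after untwisting $\T{Q(R)}\simeq Q(R)[x;\phi]$ via Theorem \ref{prime case}, it applies Lemma \ref{the untwisted cases}(2) directly to $Z[x;\phi]$ (at the level of $Z$, not of $F$) to get $\udim{Z^{\phi}}{Z}<\infty$, and then identifies $Z^{\si,\de}=Z^{\phi}$ in one uniform stroke by observing that, since the untwisting isomorphism is a $Q(R)$-isomorphism, $Z(Q(R))^{\si,\de}=Q(R)\cap Z(\T{Q(R)})=Q(R)\cap Z(Q(R)[x;\phi])=Z(Q(R))^{\phi}$, whence $Z^{\si,\de}=Z\cap Z(Q(R))^{\phi}=Z^{\phi}$ -- no case analysis at all. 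You instead must prove $F^{\phi}=K$ case by case, and the obstacle you flag is real but surmountable exactly as you sketch: when $\si\vert_Z\ne\mbox{id}$, $\de$ is inner on $Q(R)$, so $Z^{\si,\de}=Z^{\si}$ and the norm $b\si(b)\cdots\si^{n-1}(b)$ clears denominators into $Z^{\si}$; when $\si\vert_Z=\mbox{id}$, $d=u\de$ with $u$ invertible in $Q(R)$ gives $Z^{\si,\de}=Z^{d}$, and in characteristic $p$ one replaces $b$ by $b^p\in Z^{d}$ (in characteristic $0$, innerness of $d$ forces $Z^{\si,\de}=Z$, so $K=F$). What each approach buys: the paper's centralizer identity is shorter, uniform, and avoids any structure theory of the fixed field; your version is more explicit, makes the equality $F^{\phi}=\mbox{\rm Frac}(Z^{\si,\de})$ visible (which the paper leaves implicit), and organizes $(2)$--$(4)$ cleanly around the single pivot $\dim_K F<\infty$, whereas the paper obtains the corresponding equivalences only by going around the cycle of implications.
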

\begin{proof}
 $(1)\Rightarrow (2)$ Suppose that $\Tn$ is a \mbox{\rm PI} ring.
 Then, by  Theorem \ref{prime case},
 $\T{Q(R)}$ is  $Q(R)$-isomorphic to $Q(R)[x;\phi]$, where either
  $\phi=\si$ and $\phi(Z)=Z$  or $\phi$ is a derivation
  of $Q(R)$ such that $\phi(R)\subseteq R$. In particular, in any case we have
  $\phi(Z)\subseteq Z$ and  we
can consider $Z[x;\phi]$ as a subring of the \mbox{\rm PI} ring
$Q(R)[x,\phi]$. Now, Lemma \ref{the untwisted cases}(2) shows that
$\udim{Z^{\phi}}{Z}$ is finite.

   Notice that:
 $$Z(Q(R))^{\si,\de}=Q(R)\cap Z(\T{Q(R)})=Q(R)\cap
 Z(Q(R)[x;\phi])=Z(Q(R))^{\phi}.$$
 Therefore, $Z^{\si,\de}=Z\cap Z(Q(R))^{\si,\de}=Z\cap
 Q(R)^{\phi}=Z^{\phi}$ and  $\udim{Z^{\si,\de}}{Z}=\udim{Z^{\phi}}{Z}<\infty$
 follows.

 $(2)\Rightarrow (3)$. Suppose that
 $\udim{Z^{\si,\de}}{Z}<\infty$. Then, making use of
 Lemma \ref{udim and localizations}, we obtain:
 $\dim_{K}Z(Z^{\si,\de})^{-1}= \udim{Z^{\si,\de}}Z< \infty$.  This
 implies that the commutative domain
 $Z(Z^{\si,\de})^{-1}$ is a field.  Therefore
 $Z(Z^{\si,\de})^{-1}=ZZ^{-1}$ and $Q(R)=RZ^{-1}=R(Z^{\si,\de})^{-1}$.
  Since $Q(R)$ is finite dimensional over its center $ZZ^{-1}=Z(Z^{\si,\de})^{-1}$
  which is a finite
 dimensional field extension of $K$, we have
 $\dim_KQ(R)<\infty$.

 The implication $(3)\Rightarrow (4)$  is clear.

 $(4)\Rightarrow (1)$.  Under the hypothesis $(4)$,
 $\T{Q(R)}$ is a finitely generated module over the commutative
 polynomial ring $K[x]$. This yields   that $\T{Q(R)}$ is a \mbox{\rm PI} ring and so is $\Tn$.
\end{proof}

\section{ \sc \normalsize  Semiprime Coefficient Ring}

In this section we  will investigate Ore extensions $\Tn$ over a
semiprime coefficient ring $R$ satisfying the ACC on annihilators.
 We will frequently use the
following lemma, which is an obvious application of results of
Cauchon and Robson from \cite{CR} (Cf. Lemma 1.1 to Lemma 1.4).

\begin{lemma}
\label{Cauchon and Robson} Let $R=\bigoplus_{i=1}^s B_i$ be a
decomposition of a semisimple ring $R$ into its simple components
and let $\si,\de$ be an injective endomorphism and a
$\si$-derivation of $R$, respectively. Then :
\begin{enumerate}
\item There exists a permutation $\rho$ of the index set $\{1,\dots,s\}$
such that $\si (B_i)\subseteq B_{\rho(i)}$ and $\de (B_i)\subseteq
B_i + B_{\rho (i)}$.
\item If $\{1,\dots,s\}=\cup_{j=1}^k {\cal O}_j$ is the
decomposition of the index set into orbits under the action of the
permutation $\rho$ and $A_j:=\bigoplus _{i\in {\cal O}_j}B_i$,
then $R[x;\si,\de]=\bigoplus_{j=1}^k
A_j[x_j;\si\vert_{A_j},\de\vert_{A_j}]$.
\item Let $j\in \{1,\dots,k\}$ be such that $\vert\,{\cal O}_j\, \vert
>1$, then $\de\vert_{A_j}$ is an inner $\si|_{A_j}$-derivation of
$A_j$. In particular, $A_j[x_j;\si\vert_{A_j},\de\vert_{A_j}]$ is
$A_j$-isomorphic to $A_j[x_j;\si|_{A_j}]$.
\item There exists an $m\ge 1$ such that $\si^m(B_i)\subseteq
B_i$, for all $i\in \{1,\dots,s\}$.
\end{enumerate}
\end{lemma}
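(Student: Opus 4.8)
The plan is to treat the four statements as a package resting on a single structural fact about how $\si$ acts on the central idempotents $e_1,\dots,e_s$ attached to the simple components $B_i=e_iR$, to quote that fact from \cite{CR}, and to do only the routine bookkeeping here.

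First I would settle part $(1)$. Put $f_i=\si(e_i)$. Since $\si$ is a unital ring endomorphism, $\{f_i\}$ is a complete set of orthogonal idempotents, and since each $e_i$ is central, each $f_i$ commutes with $\si(R)$. The substantive claim, which is exactly the content of the first lemmas of \cite{CR}, is that injectivity of $\si$ forces each $f_i$ to be a single primitive \emph{central} idempotent, so that $f_i=e_{\rho(i)}$ for a well-defined map $\rho$ of $\{1,\dots,s\}$ and hence $\si(B_i)=e_{\rho(i)}\si(R)\subseteq B_{\rho(i)}$. Morally this is a length argument on the pieces $g_{ij}=e_jf_i$: each nonzero $B_i\to g_{ij}B_jg_{ij}$ is, by simplicity of $B_i$, a unital embedding of one simple Artinian ring into a corner of another, and balancing the resulting length constraints against $\sum_i g_{ij}=e_j$, $\sum_j g_{ij}=f_i$, and the non-vanishing of every $f_i$ (injectivity), forces the matching $i\mapsto\rho(i)$ to be a bijection. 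Granting the $\si$-part, the $\de$-inclusion is a direct Leibniz computation: applying $\de$ to $e_i=e_i^2$ gives $\de(e_i)\in B_i+B_{\rho(i)}$, and for $a\in B_i$ expanding $\de(a)=\de(ae_i)=\si(a)\de(e_i)+\de(a)e_i$ together with $\si(a)\in B_{\rho(i)}$ shows that every component of $\de(a)$ outside $B_i+B_{\rho(i)}$ vanishes.

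Next, for part $(2)$, I would package the orbits. Since $\rho$ permutes each orbit $\mathcal{O}_j$, the set $A_j=\bigoplus_{i\in\mathcal{O}_j}B_i$ is both $\si$- and $\de$-stable (the latter because $\de(B_i)\subseteq B_i+B_{\rho(i)}\subseteq A_j$), and its identity $\varepsilon_j=\sum_{i\in\mathcal{O}_j}e_i$ satisfies $\si(\varepsilon_j)=\sum_{i\in\mathcal{O}_j}e_{\rho(i)}=\varepsilon_j$. Moreover $\de(\varepsilon_j)\in A_j$, so $\varepsilon_j\de(\varepsilon_j)=\de(\varepsilon_j)=\de(\varepsilon_j)\varepsilon_j$, and the Leibniz rule applied to $\varepsilon_j=\varepsilon_j^2$ gives $\de(\varepsilon_j)=\si(\varepsilon_j)\de(\varepsilon_j)+\de(\varepsilon_j)\varepsilon_j=2\de(\varepsilon_j)$, whence $\de(\varepsilon_j)=0$. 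Thus each $\varepsilon_j$ is $\si$-invariant and $\de$-constant, so it commutes with $R$ and with $x$ and is therefore central in $\T{R}$; as the $\varepsilon_j$ are orthogonal and sum to $1$, we get $\T{R}=\bigoplus_j\varepsilon_j\T{R}=\bigoplus_j A_j[x_j;\si|_{A_j},\de|_{A_j}]$, which is the asserted decomposition.

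Finally, part $(4)$ is immediate: $\rho$ is a permutation of a finite set, so it has finite order $m$, and iterating $\si(B_i)\subseteq B_{\rho(i)}$ yields $\si^m(B_i)\subseteq B_{\rho^m(i)}=B_i$. Part $(3)$, that $\de|_{A_j}$ is an inner $\si|_{A_j}$-derivation when $|\mathcal{O}_j|>1$, is the remaining genuine input from \cite{CR}: on a nontrivial orbit $\si$ shifts the components cyclically, and one produces $b\in A_j$ with $\de|_{A_j}=\de_{b,\si|_{A_j}}$, after which the change of variable $x_j\mapsto x_j-b$ gives $A_j[x_j;\si,\de]\cong A_j[x_j;\si]$. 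I expect the main obstacle to be precisely parts $(1)$ and $(3)$: the honest work — that injectivity makes $\si$ permute the simple components, and that a derivation cycling nontrivially through them must be inner — lives in \cite{CR}, while the decomposition in $(2)$ and the finiteness in $(4)$ are elementary assembly.
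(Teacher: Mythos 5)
Your proposal is correct and takes essentially the same route as the paper: the paper gives no proof of this lemma at all, presenting it as ``an obvious application'' of Lemmas 1.1--1.4 of \cite{CR}, which is precisely where you locate the substantive content (the permutation of the simple components in part (1) and the innerness of $\de$ on nontrivial orbits in part (3)). Your supplementary verifications --- the Leibniz computations giving $\de(e_i)\in B_i+B_{\rho(i)}$ and $\de(\varepsilon_j)=0$, the centrality of the orbit idempotents $\varepsilon_j$ in $\T{R}$ yielding the ring decomposition in (2), and the finite order of $\rho$ giving (4) --- are all sound.
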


Let us first consider the case of an Ore extension of endomorphism
type.

\begin{prop}\label{semiprime endomorhism}
 Let $\si$ be an
 injective endomorphism of a semiprime \mbox{\rm PI} ring $R$ satisfying the \mbox{\rm ACC}
  on annihilators. The
 following conditions are equivalent:
\begin{enumerate}
  \item $R[x;\si]$ is a \mbox{\rm PI} ring.
  \item The restriction $\si\vert_{Z(R)}$ is an automorphism of $Z(R)$ of finite order.
  \item There exists $n\geq 1$ such that $\si^n$ is identity on
  the center $Z(R)$ of $R$.
  \item There exists   a regular element $ u\in Z(R)$ such
  that $\si(u)=u$ and $\si$
  is an  automorphism of the localization $RS^{-1}$ of finite inner order, where
  $S$ denotes the set of all powers of $u$.
  \item  $\si$ is an
  automorphism of $Q(R)$ of finite inner order.
\end{enumerate}
\end{prop}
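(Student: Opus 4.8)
The overall plan is to reduce to the semisimple quotient ring and then to its simple components, where the prime case is already settled by Proposition \ref{prime PI endoporphism}. Since $R$ is a semiprime \mbox{\rm PI} ring with the \mbox{\rm ACC} on annihilators, Proposition \ref{classics} makes it a Goldie ring with semisimple quotient ring $Q(R)=RZ(R)^{-1}=\bigoplus_{i=1}^{s}B_i$; by Proposition \ref{reduction to semisimple}, $\si$ extends uniquely to an injective endomorphism of $Q(R)$ and $\A{R}$ is \mbox{\rm PI} if and only if $\A{Q(R)}$ is \mbox{\rm PI}. I would prove the cycle $(1)\Rightarrow(3)\Rightarrow(4)\Rightarrow(5)\Rightarrow(1)$ together with the trivial equivalence $(2)\Leftrightarrow(3)$: here $(2)\Rightarrow(3)$ is immediate, and $(3)\Rightarrow(2)$ is exactly Lemma \ref{generalities on the center}(1), which promotes ``$\si^n|_{Z(R)}=\mbox{\rm id}$'' to ``$\si|_{Z(R)}$ is an automorphism of $Z(R)$'', necessarily of finite order.

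For the crucial implication $(1)\Rightarrow(3)$ I first replace $\A{R}$ by the \mbox{\rm PI} ring $\A{Q(R)}$ and invoke Lemma \ref{Cauchon and Robson}: the endomorphism $\si$ permutes the simple components, giving a permutation $\rho$ with $\si(B_i)\subseteq B_{\rho(i)}$ and a decomposition of $\{1,\dots,s\}$ into $\rho$-orbits. Fix a component $B_i$ whose $\rho$-orbit has length $t_i$, so that $\si^{t_i}(B_i)\subseteq B_i$. The elements $\sum_\ell b_\ell x^{t_i\ell}$ with $b_\ell\in B_i$ then form a subring of $\A{Q(R)}$ naturally isomorphic to $B_i[z;\si^{t_i}|_{B_i}]$, and as a subring of a \mbox{\rm PI} ring it is \mbox{\rm PI}. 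Applying Proposition \ref{prime PI endoporphism} to the prime \mbox{\rm PI} ring $B_i$ with the injective endomorphism $\si^{t_i}|_{B_i}$ forces $\si^{t_i}|_{Z(B_i)}$ to be an automorphism of $Z(B_i)$ of finite order, say $d_i$. Taking $N$ to be a common multiple of all the products $t_id_i$ (orbit length times this order) yields $\si^N|_{Z(B_i)}=\mbox{\rm id}$ for every $i$, hence $\si^N=\mbox{\rm id}$ on $Z(Q(R))=\bigoplus_i Z(B_i)$ and in particular on $Z(R)\subseteq Z(Q(R))$, which is $(3)$. The main obstacle is precisely this step: disentangling the permuted, twisted component structure so as to land inside a single simple component; the subring identification $B_i[z;\si^{t_i}|_{B_i}]\subseteq\A{Q(R)}$ is what makes the prime case applicable.

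For $(3)\Rightarrow(4)$ I would imitate the construction in Proposition \ref{inner auto}(2), now over the semisimple ring $Q(R)$. From $\si^N|_{Z(R)}=\mbox{\rm id}$ and Lemma \ref{generalities on the center}(1), $\si$ restricts to an automorphism of $Z(R)$, hence of $Z(Q(R))$, of finite order; combining this with Lemma \ref{Cauchon and Robson} and Lemma \ref{generalities on the center}(3) shows that $\si$ is in fact an automorphism of $Q(R)$ --- on each $\rho$-orbit (of length $t$, say) $\si^{t}|_{B_i}$ is an automorphism of $B_i$ by Lemma \ref{generalities on the center}(3), and since both composites around the orbit are automorphisms, each map $\si\colon B_i\to B_{\rho(i)}$ is onto. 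As $\si^N$ fixes every central idempotent and is the identity on each $Z(B_i)$, the Skolem--Noether theorem makes $\si^N|_{B_i}$ inner, so $\si^N$ is inner on $Q(R)$, say adjoint to a unit $w$. Finally, choosing a common central regular denominator $v\in Z(R)$ for $w$ and $w^{-1}$ and setting $u=\prod_{j=0}^{N-1}\si^j(v)$ gives, exactly as in Proposition \ref{inner auto}(2), a regular $u\in Z(R)$ with $\si(u)=u$ over which $w$ becomes invertible; thus $\si$ is an automorphism of $RS^{-1}$ of finite inner order, where $S=\{u^k\mid k\ge0\}$.

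The remaining implications are routine. For $(4)\Rightarrow(5)$ one localises further: $RS^{-1}\subseteq Q(R)$, the unit realising $\si^N$ already lies in $RS^{-1}$, and injectivity of $\si$ together with surjectivity of the inner automorphism $\si^N$ forces $\si$ to be an automorphism of $Q(R)$ of finite inner order. For $(5)\Rightarrow(1)$ I argue as in Proposition \ref{prime PI endoporphism}: writing $\si^N(r)=wrw^{-1}$ with $w$ a unit of $Q(R)$, the element $y=w^{-1}x^N$ is central in the subring $Q(R)[x^N]$, so $Q(R)[x^N]\cong Q(R)[y]$ is an ordinary polynomial ring over the \mbox{\rm PI} ring $Q(R)$ and hence \mbox{\rm PI}; since $\A{Q(R)}$ is a free module of rank $N$ over $Q(R)[x^N]$ with basis $1,x,\dots,x^{N-1}$, it is \mbox{\rm PI}, and Proposition \ref{reduction to semisimple} returns that $\A{R}$ is \mbox{\rm PI}.
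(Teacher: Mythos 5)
Your proposal is correct and follows essentially the same route as the paper: reduction to $Q(R)$ via Proposition \ref{reduction to semisimple}, the Cauchon--Robson decomposition into simple components, the prime-case results of Section 2, the Skolem--Noether/inner-automorphism construction of Proposition \ref{inner auto}(2), and the finitely-generated-free-module-over-a-\mbox{\rm PI}-subring argument. The only differences are cosmetic: you run the cycle $(1)\Rightarrow(3)\Rightarrow(4)\Rightarrow(5)\Rightarrow(1)$ instead of the paper's $(1)\Rightarrow(5)\Rightarrow(4)\Rightarrow(3)\Rightarrow(2)\Rightarrow(1)$, you work with per-orbit powers $\si^{t_i}$ and cite Proposition \ref{prime PI endoporphism} where the paper uses the uniform $m$ from Lemma \ref{Cauchon and Robson}(4) and Theorem \ref{main prime}(8).
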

\begin{proof} By Lemma \ref{known facts}, $\si$ can be extended to
an injective endomorphism of  $Q(R)=RZ(R)^{-1}$. Let
$Q(R)=\bigoplus_{i=1}^sB_i$ be a decomposition of $Q(R)$ into its
simple components.

 $(1)\Rightarrow (5)$. Suppose  $R[x;\si]$ is a PI ring. Thus, by Proposition
 \ref{reduction to semisimple},
  $Q(R)[x;\si]$ also  satisfies
 a polynomial identity.
 By Lemma \ref{Cauchon and Robson}, there exists $m\geq 1$, such
that all components $B_i$ are $\si^m$-stable.
 Therefore,
 $Q(R)[x;\si^m]\simeq Q(R)[x^m;\si^m]\subseteq Q(R)[x;\si]$
 is also a PI ring and
$$Q(R)[x;\si^m]=(\bigoplus_{i=1}^sB_i)[x;\si^m]\simeq\bigoplus_{i=1}^sB_i[x_i;\si^m].$$
This shows that, for any $1\leq i\leq s$, $B_i[x_i;\si^m]$
satisfies a polynomial identity and Theorem \ref{main prime}(8)
applied to each simple component $B_i$ yields that there exists
$k\geq 1$ such that  $\si^{n}$, where $n=mk$, is an inner
automorphism of $Q(R)$, i.e. $(5)$ holds.

 Since $Q(R)=RZ(R)^{-1}$, the implication $(5)\Rightarrow (4)$
 can be proved using the same argument
 as in the proof of  Proposition \ref{inner auto}(2).

The implication $(4)\Rightarrow (3)$ is clear and $(3)\Rightarrow
(2)$  is a direct consequence of Lemma \ref{generalities on the
center}(1).

$(2)\Rightarrow (1)$. Suppose that $(2)$ holds and let $n$ denote
the order of $\si|_{Z(R)}$. Then $\si^n(B_i)\subseteq B_i$,  for
$1\leq i\leq s$. Then, by Lemma \ref{generalities on the center}
and the theorem of Skolem-Noether, $\si^n|_{B_i}$ is an inner
automorphism of $B_i$, for any $i$. Hence $\si^n$ is an inner
automorphism of $Q(R)$ and the subring $Q(R)[x^n]\subseteq
Q(R)[x;\si]$ is isomorphic to a polynomial ring $Q(R)[y]$, so it
satisfies a polynomial identity. This implies that $Q(R)[x;\si]$
is a PI ring, as $Q(R)[x;\si]$ is a finitely generated free module
over its subring $Q(R)[x^n]$ and (1) follows.
\end{proof}

As an immediate application of the above proposition, Corollary
\ref{graded PI} and Proposition
 \ref{reduction to semisimple} we easily get the following:

\begin{corollary}\label{derivation}
Let $R$ be a   semiprime ring  with \mbox{\rm ACC} on annihilators
and $\si$ an injective endomorphism of $R$. If $\T{R}$ is a
\mbox{\rm PI} ring then $Q(R)[x;\si,\de]$ is a \mbox{\rm PI} ring
and  $\si$ is an automorphism  of the semisimple ring $Q(R)$ of
finite inner order.
\end{corollary}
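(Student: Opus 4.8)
The plan is to read the corollary off from the three immediately preceding results, using the associated graded ring to dispose of the derivation. First I would check that the hypotheses already place us in the favourable setting: since $\T{R}$ is a \mbox{\rm PI} ring, its subring $R$ is \mbox{\rm PI} as well, and combined with the standing assumptions that $R$ is semiprime with the \mbox{\rm ACC} on annihilators, Proposition \ref{classics} guarantees that $R$ is a semiprime \mbox{\rm PI} left Goldie ring whose classical quotient ring $Q(R)=RZ(R)^{-1}$ is semisimple. This is exactly the situation required by the auxiliary results.

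For the first conclusion I would invoke Proposition \ref{reduction to semisimple} directly. Because $R$ is semiprime left Goldie and $\si$ is injective, that proposition tells us $\si$ and $\de$ extend uniquely to $Q(R)$ and that the \mbox{\rm PI} property of $\T{R}$ is equivalent to that of $\T{Q(R)}=Q(R)[x;\si,\de]$. Hence $Q(R)[x;\si,\de]$ is a \mbox{\rm PI} ring, which settles the first half with no further work.

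The statement that $\si$ is an automorphism of $Q(R)$ of finite inner order is the step I would treat as the crux. The subtlety is that Proposition \ref{semiprime endomorhism}, which characterizes when an endomorphism-type extension is \mbox{\rm PI}, is stated only for the pure case $\de=0$, so one cannot feed $\T{R}$ into it directly. The remedy is Corollary \ref{graded PI}: the associated graded ring of $\T{R}$ for the degree filtration is $\A{R}$, so the \mbox{\rm PI} property passes from $\T{R}$ down to $\A{R}$. Now $\A{R}$ meets the hypotheses of Proposition \ref{semiprime endomorhism} (the same injective $\si$ over the same semiprime \mbox{\rm PI} ring $R$ with \mbox{\rm ACC} on annihilators), and the implication $(1)\Rightarrow(5)$ of that proposition delivers precisely that $\si$ is an automorphism of the semisimple ring $Q(R)$ of finite inner order.

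Assembling the two halves finishes the argument, and I do not expect any genuine obstacle beyond correctly matching hypotheses: the main thing to be careful about is routing each conclusion through the right auxiliary result — the \mbox{\rm PI}-ness of $Q(R)[x;\si,\de]$ through the localization/reduction result, and the structural statement on $\si$ through the graded ring together with the endomorphism-type characterization, rather than attempting to extract it from $\T{R}$ or $\T{Q(R)}$ where the derivation would still be present.
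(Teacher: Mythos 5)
Your proposal is correct and follows exactly the paper's intended argument: the paper derives this corollary as ``an immediate application'' of Proposition \ref{semiprime endomorhism}, Corollary \ref{graded PI} and Proposition \ref{reduction to semisimple}, which is precisely your routing --- the graded ring to reduce to the endomorphism case $\A{R}$ for the finite-inner-order statement, and the localization result for the \mbox{\rm PI}-ness of $Q(R)[x;\si,\de]$. Your preliminary verification that $R$ is semiprime \mbox{\rm PI} left Goldie (via Proposition \ref{classics}) correctly fills in the hypothesis-matching the paper leaves implicit.
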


 Thus while investigating the \mbox{\rm PI} property of $\T{R}$, the crucial
  case is when the ring $R$ is semisimple. Moreover, by Lemma \ref{Cauchon and
  Robson}(2), one may restrict attention to the case when $\si$
  acts transitively on the set of all simple components $B_i$ of
  $R=\bigoplus_{i=1}^sB_i$.
\begin{prop}\label{char=0}
 Let $R= \bigoplus_{i=1}^sB_i$ be a decomposition of  a semisimple \mbox{\rm PI} ring
  into simple components and $\si$  an injective endomorphism
 of $R$. Suppose that  $\si$ acts transitively on the set of simple components.
   Then the following conditions are equivalent:
\begin{enumerate}
  \item $\T{R}$ is a \mbox{\rm PI} ring.
  \item $\si$ is an automorphism of $R$ of finite inner order and
  one of the following conditions holds:
\begin{enumerate}
  \item $\Tn$ is isomorphic either to $R[x;\si]$ or to $R[x]$.
  \item $R$ is a simple ring  of a nonzero characteristic $p$ and
  $R[x;\si,\de]\simeq R[x;d] $
   where $d$ is a  derivation of $R$ such that there
   exist elements  $q_l=1,\ldots, q_1\in R$
   such that $\sum_{i=0}^{l}q_id^{p^i}$ is an inner derivation of
   $R$.
\end{enumerate}
\end{enumerate}
\end{prop}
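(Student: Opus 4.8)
The plan is to reduce the whole statement to two cases already settled in the paper --- the simple case of Theorem \ref{prime case} and the endomorphism-type case of Proposition \ref{semiprime endomorhism} --- the split being governed by the number $s$ of simple components. Since $\si$ acts transitively, the permutation $\rho$ of Lemma \ref{Cauchon and Robson}(1) is a single $s$-cycle, so $\{1,\dots,s\}$ is one $\rho$-orbit; everything then turns on whether that orbit is a singleton ($s=1$, i.e. $R$ simple) or not ($s>1$). Before splitting I would extract the clause common to both branches of (2): assuming (1), Corollary \ref{derivation} applies, because a semisimple ring is semiprime with \mbox{\rm ACC} on annihilators and is its own quotient ring $Q(R)=R$, and it yields that $\si$ is an automorphism of $R$ of finite inner order.

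For $s>1$ the unique orbit has length $>1$, so Lemma \ref{Cauchon and Robson}(3) shows $\de$ is an inner $\si$-derivation and $\T{R}\simeq\A{R}$; note this isomorphism is unconditional, using only injectivity of $\si$. Hence (1) is equivalent to $\A{R}$ being \mbox{\rm PI}, which by Proposition \ref{semiprime endomorhism} (equivalence of its conditions (1) and (5), with $Q(R)=R$) is exactly the assertion that $\si$ is an automorphism of $R$ of finite inner order. This lands in case (2)(a) via $\T{R}\simeq\A{R}$, and the same equivalences run backwards to give (2)$\Rightarrow$(1).

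For $s=1$ the ring $R=B_1$ is simple Artinian, hence prime with $Q(R)=R$, so Theorem \ref{prime case} applies directly. In characteristic $0$ it returns $\T{R}\simeq\A{R}$ or $\T{R}\simeq R[x]$, which is precisely (2)(a); in characteristic $p$ it returns $\A{R}$ (again (2)(a)) or $R[x;d]$ with $d$ satisfying the relation $\sum_{i=0}^l q_i d^{p^i}$ inner, which is precisely (2)(b). The same theorem yields that $\si|_{Z(R)}$ is an automorphism of $Z(R)$ of finite order, and since $R$ is simple and finite-dimensional over $Z(R)$, Lemma \ref{generalities on the center}(3) and the Skolem--Noether theorem promote this to $\si$ being an automorphism of $R$ of finite inner order. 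Because Theorem \ref{prime case} is an equivalence, its converse half already delivers (2)$\Rightarrow$(1); for the backward direction I would alternatively observe that each listed model is visibly \mbox{\rm PI} --- $R[x]$ trivially, $\A{R}$ by Proposition \ref{semiprime endomorhism}, and $R[x;d]$ by Theorem \ref{main prime} ((7)$\Rightarrow$(1)), the displayed relation being a quasi-algebraic condition on $d$.

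The part I expect to be fiddly is not the mathematics but the bookkeeping: I must check that the isomorphism types emitted by Theorem \ref{prime case} match the options in (2)(a)--(2)(b) term by term, and that the ``automorphism of finite inner order'' clause is read off consistently in every branch. The reason this stays clean is that $R$ is semisimple, so $Q(R)=R$ throughout and no localization step intervenes; the characterizations of the prime case transfer to the present setting without any adjustment.
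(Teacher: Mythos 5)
Your proposal is correct and takes essentially the same route as the paper's own proof: the identical dichotomy between $R$ simple (settled by Theorem \ref{prime case}) and $R$ not simple (where transitivity makes the single orbit have length $>1$, so Lemma \ref{Cauchon and Robson}(3) gives $\T{R}\simeq R[x;\si]$ and Proposition \ref{semiprime endomorhism} finishes). Your extra bookkeeping --- extracting the finite-inner-order clause via Corollary \ref{derivation} and Skolem--Noether, and checking that the condition on $d$ is quasi-algebraicity --- only spells out what the paper compresses into ``direct consequence.''
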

\begin{proof} If $R$ is simple, i.e. $R=B_1$, then the proposition is a
direct consequence of Theorem \ref{prime case}.

If $R$ is not simple then, by Lemma \ref{Cauchon and Robson}(3),
 $\Tn\simeq R[x;\si]$ and
the proposition is a consequence of Proposition \ref{semiprime
endomorhism}.
\end{proof}
\begin{corollary}\label{C semisimple}
 Let $R$ be a semisimple \mbox{\rm PI} ring with an injective
 endomorphism $\si$. The following conditions are equivalent:
\begin{enumerate}
  \item $\Tn$ is a \mbox{\rm PI} ring.
  \item The center of  $\Tn$ contains a nonconstant  polynomial with
  invertible leading coefficient.
  \item $\udim{Z(R)^{\si,\de}}{Z(R)}$ is finite.
 \end{enumerate}
\end{corollary}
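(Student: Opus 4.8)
The plan is to reduce to the case where $\si$ acts transitively on the simple components via Lemma \ref{Cauchon and Robson}(2), and then read off the three conditions from the prime-coefficient results of Section $2$ together with Proposition \ref{semiprime endomorhism}. First I would dispose of the easy implication $(2)\Rightarrow(1)$: if $f\in Z(\Tn)$ is nonconstant of degree $n$ with invertible leading coefficient, then the central subring $R[f]$ is isomorphic to the ordinary polynomial ring $R[y]$, hence PI, and $\Tn$ is free of rank $n$ over $R[f]$ with basis $1,x,\dots,x^{n-1}$; a ring module-finite over a central PI subring is PI. This is exactly the argument used for $(5)\Rightarrow(6)$ in Theorem \ref{main prime}.

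For the remaining equivalences, write $R=\bigoplus_{i=1}^sB_i$ and use Lemma \ref{Cauchon and Robson}(2) to decompose $\Tn=\bigoplus_{j=1}^kA_j[x_j;\si\vert_{A_j},\de\vert_{A_j}]$, where $A_j=\bigoplus_{i\in{\cal O}_j}B_i$ runs over the orbits of the permutation $\rho$. Both $\si$ and $\de$ preserve each $A_j$, hence each $Z(A_j)$, so $Z(R)^{\si,\de}=\bigoplus_jZ(A_j)^{\si,\de}$ and $\udim{Z(R)^{\si,\de}}{Z(R)}=\sum_j\udim{Z(A_j)^{\si,\de}}{Z(A_j)}$. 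Since a finite direct product of rings is PI iff each factor is, this reduces $(1)\Leftrightarrow(3)$ to the transitive case. For $(1)\Rightarrow(2)$ I would, after treating each orbit, raise the orbit-wise central polynomials to suitable powers to equalize their degrees and add them, producing a nonconstant central polynomial of $\Tn$ whose leading coefficient is invertible in $R=\bigoplus_iB_i$. Hence it suffices to assume $\si$ acts transitively on the components.

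In the transitive case there are two subcases. If $\vert{\cal O}_j\vert=1$ then $A_j=B$ is simple, hence prime PI with $Q(B)=B$, and the equivalence $(1)\Leftrightarrow(3)$ is precisely Theorem \ref{prime caracterization with udim} applied to $B$, while Theorem \ref{main prime} supplies, when $B[x;\si,\de]$ is PI, a nonconstant central polynomial with invertible leading coefficient, giving $(1)\Rightarrow(2)$. If $\vert{\cal O}_j\vert=t>1$, then by Lemma \ref{Cauchon and Robson}(3) $\de\vert_{A_j}$ is inner and $A_j[x_j;\si\vert_{A_j},\de\vert_{A_j}]\simeq A_j[x_j;\si\vert_{A_j}]$ is of endomorphism type; Proposition \ref{semiprime endomorhism} then characterizes $(1)$ by ``$\si\vert_{Z(A_j)}$ is a finite-order automorphism'', and the construction in the proof of Proposition \ref{inner auto}(2) produces a $\si$-invariant invertible $u\in A_j$ with $\si^n(r)=u^{-1}ru$, so that $ux^n$ is central with invertible leading coefficient $u$, yielding $(2)$.

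The real obstacle is matching condition $(3)$ with the endomorphism condition in this last subcase. Since $\de\vert_{A_j}$ is inner one checks $Z(A_j)^{\si,\de}=Z(A_j)^{\si}$, which I would identify with the fixed field $E=F_1^{\si^t}$ of $\si^t$ acting on $F_1=Z(B_1)$; writing the $E$-module $Z(A_j)=\bigoplus_{l=1}^tF_l$ componentwise then gives $\udim{E}{Z(A_j)}=\sum_{l=1}^t[F_l:\si^{l-1}(E)]$. The crux is a field-theoretic step: finiteness of this sum forces $[F_1:E]<\infty$, whence the injective endomorphism $\si^t$ of $F_1$ is onto (an injective endomorphism of a field whose fixed field has finite codimension is surjective, since $\si^t(F_1)$ and $F_1$ have the same finite degree over $E$) and, by Artin's theorem, of finite order; surjectivity of the cyclic composite $\si^t=\si\vert_{F_t}\circ\cdots\circ\si\vert_{F_1}$ then propagates to each factor, so $\si\vert_{Z(A_j)}$ is a finite-order automorphism. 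The converse is the routine computation that such an automorphism gives $\udim{E}{Z(A_j)}=t\,[F_1:E]<\infty$.
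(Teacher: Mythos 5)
Your proof is correct, and most of its skeleton coincides with the paper's: the free-module argument for $(2)\Rightarrow(1)$, the orbit decomposition of Lemma \ref{Cauchon and Robson}(2) with the additivity of $\udim{}{}$ over the summands $A_j$, the construction of $f=\sum_j f_j$ after equalizing degrees by taking powers (including the $\si$-invariant $u$ with $f_j=ux^n$), and the appeal to Theorems \ref{main prime} and \ref{prime caracterization with udim} when the orbit is a single simple component are all exactly what the paper does. Where you genuinely diverge is the crux $(1)\Leftrightarrow(3)$ for an orbit of length $t>1$. The paper, after replacing $\Tn$ by $R[x;\si]$ via Lemma \ref{Cauchon and Robson}(3), gets $(1)\Rightarrow(3)$ from Lemma \ref{the untwisted cases}(2a) applied to $Z(R)[x;\si]$, and gets $(3)\Rightarrow(1)$ by passing to the $\si^n$-stable simple components: from $\udim{Z(R)^{\si}}{Z(R)}<\infty$ it deduces $\udim{Z(B_i)^{\si^n}}{Z(B_i)}<\infty$, applies Theorem \ref{prime caracterization with udim} to each $B_i[x_i;\si^n]$, and concludes since $R[x;\si]$ is module-finite over $R[x^n]\simeq R[x;\si^n]$. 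You instead make the twisted diagonal $Z(R)^{\si}\cong E=F_1^{\si^t}$ explicit inside $Z(R)=\bigoplus_{l=1}^tF_l$ and argue by field theory: the degree count $[\si^t(F_1):E]=[F_1:E]<\infty$ forces $\si^t(F_1)=F_1$, Artin's theorem gives finite order, and Proposition \ref{semiprime endomorhism} closes the loop. One compressed step deserves a word, since surjectivity of a composite in general yields surjectivity only of the \emph{outermost} factor: here it does propagate, by peeling. Since $\si^t\vert_{F_1}$ is bijective, the injective map $\si\vert_{F_t}\colon F_t\to F_1$ has image containing $\si^t(F_1)=F_1$, hence is bijective; then $\si^{t-1}=(\si\vert_{F_t})^{-1}\circ\si^t\vert_{F_1}\colon F_1\to F_t$ is bijective and one descends around the cycle. (Alternatively, with your $d_l=[F_l:\si^{l-1}(E)]$ one has $d_{l+1}=[F_{l+1}:\si(F_l)]\,d_l$, and multiplying around the cycle forces every $[F_{l+1}:\si(F_l)]=1$.) What your route buys is a self-contained, purely Galois-theoretic treatment of the non-simple orbits that moreover exhibits $\si(F_l)=F_{l+1}$ explicitly; what the paper's route buys is economy and uniformity, recycling Lemma \ref{the untwisted cases} and Theorem \ref{prime caracterization with udim} verbatim without ever analyzing the structure of $Z(R)^{\si}$.
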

\begin{proof}
$(2)\Rightarrow (1)$. Let $f\in \Tn$ be a nonconstant polynomial
from the center of $\Tn$ with invertible leading coefficient. Then
the subring $R[f]\subseteq \Tn$ satisfies a polynomial identity
and $\Tn$ is a finitely generated left module over $R[f]$. This
implies that $\Tn$ is a PI ring.

$(1)\Rightarrow (2)$. Let $R= \bigoplus_{i=1}^s B_i =
\bigoplus_{j=1}^kA_j$, where $A_j=\bigoplus _{i\in {\cal
O}_j}B_i$, be
 a decomposition of $R$ described in Lemma \ref{Cauchon and Robson}.
 Then, by the same lemma,  we have  $\T{R}=\bigoplus_{j=1}^k A_j[x_j;\si_j,\de_j]$,
 where $\si_j=\si|_{A_j}$ and $\de_j=\de|_{A_j}$. Hence, by
 Proposition \ref{char=0}, the PI ring $T_j=A_j[x_j;\si_j,\de_j]$ is
 $A_j$-isomorphic to one of the following rings: $A_j[x]$,
 $A_j[x;\si_j]$, where $\si_j$ is an automorphism
 of $A_j$ of finite inner order or $A_j[x;d_j]$, where $d_j$ is a derivation of a
 simple ring $A_j$.

Suppose $T_j\simeq A_j[x;\si_j]$ and let $n\geq 1$ and $u\in A_j$
be an invertible element such that, for any $a\in T_j$,
$\si^n(a)=u^{-1}au$. It is known that, eventually replacing $n$ by
$n^2$ and $u$ by $u\si(u)\ldots \si^{n-1}(u)$, we may additionally
assume that $\si(u)=u$. Then $f_j=ux^n$ is a  polynomial from the
center of $T_j$.

Suppose that $T_j\simeq A_j[x;d_j]$, where $A_j$ is a simple ring.
Then, by Theorem \ref{main prime}, $T_j$ contains a nonconstant
central polynomial $f_j$ with an invertible leading coefficient.

The above shows that, for any $1\leq j\leq k$,
$T_j=A_j[x_j;\si_j,\de_j] $ contains a  nonconstant central
polynomial $f_j$ with  invertible leading coefficient. Since a
power of a central element is again central, we may choose the
polynomials $f_j$'s in such a way that $\deg f_i=\deg f_j$, for
all $1\leq i,j\leq k$. Then the polynomial $f=\sum_{j=1}^kf_j$
belongs to the center of $\Tn$, is nonconstant and the leading
coefficient of $f$ is invertible.

$(1)\Leftrightarrow (3)$. We will continue to use the notation as
in the proof of $(1)\Rightarrow (2)$. Notice that
$Z(R)=\bigoplus_{j=1}^k Z(A_j)$ and
$Z(R)^{\si,\de}=\bigoplus_{j=1}^k Z(A_j)^{\si_j,\de_j}$. Hence
$\udim{Z(R)^{\si,\de}}{Z(R)}=\sum_{j=1}^k\udim{Z(A_j)^{\si_j,\de_j}}{Z(A_j)}$.
This means that, without loosing generality, we may assume that
$R=A_1$, i.e. $\si$ acts transitively on the simple components of
$R$.

If $R$ is simple, then the equivalence $(1)\Leftrightarrow (3)$ is
given by Theorem \ref{prime caracterization with udim}.

Suppose $R$ is not simple. Then, by Lemma \ref{Cauchon and
Robson}(3), $\Tn$ is $R$-isomorphic to $R[x;\si]$. Since
$Z(R)^{\si,\de}=Z(\Tn)\cap R$ and $Z(R)^\si=Z(R[x;\si])\cap R$, we
may replace  $\Tn$ by $R[x;\si]$.

 If $R[x;\si]$ satisfies a
polynomial identity then Lemma \ref{the untwisted cases}(2a),
applied to $Z(R)[x;\si]$, shows that $\udim{Z(R)^\si}{Z}< \infty$.

Suppose now, that $\udim{Z(R)^\si}{Z}$ is finite. Recall that
$R=\bigoplus_{i=1}^sB_i$ and, by Lemma \ref{Cauchon and
Robson}(1), there is $n\geq 1$, such that $\si^n(B_i)\subseteq
B_i$, for all $i$ and
$R[x;\si^n]=\bigoplus_{i=1}^sB_i[x_i;\tau_i]$, where
$\tau_i=\si^n|_{B_i}$. Since $Z(R)^\si\subseteq Z(R)^{\si^n}$,
$\udim{Z(R)^{\si^n}}{Z}<\infty$ and, consequently,
$\udim{Z(B_i)^{\tau_i}}{Z(B_i)}<\infty$, for any $1\leq i\leq s$.
Therefore,  Theorem \ref{prime caracterization with udim} applied
to Ore extensions $B_i[x_i;\tau_i]$ shows that
$R[x;\si^n]=\bigoplus_{i=1}^sB_i[x_i;\tau_i]$ is a PI ring. Since
$R[x;\si]$ is a finitely generated module over its subring
$R[x^n]$ which is itself isomorphic to the PI ring $R[x;\si^n]$,
we conclude that $R[x;\si]$ is a PI ring.
\end{proof}

The following lemma  is of crucial importance for the forthcoming
theorem.
\begin{lemma}  \label{le} Suppose that  the ring $\Tn$ satisfies a polynomial
identity, where $R$ is a semiprime  ring with the \mbox{\rm ACC}
on annihilators and $\si$ is an injective endomorphism of $R$. Let
$Z$ denote the center of $R$ and $Q=Q(R)$. Then:
\begin{enumerate}
  \item $Q=R(Z^{\si,\de})^{-1}$
  \item If an  element $a\in Z^{\si,\de}$ is regular in $Z^{\si,\de}$, then
  $a$ is regular in $R$.
  \item $Z(Q)=Z(Z^{\si,\de})^{-1}$ and $Z(Q)^{\si,\de}=Z^{\si,\de}(Z^{\si,\de})^{-1}$
\end{enumerate}
\end{lemma}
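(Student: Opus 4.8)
The plan is to reduce everything to the semisimple quotient $Q=Q(R)=RZ^{-1}$ and then descend to $R$. Since $\Tn$ is \mbox{\rm PI}, Corollary \ref{derivation} gives that $Q[x;\si,\de]$ is \mbox{\rm PI}, that $\si$ extends to an automorphism of the semisimple ring $Q$ of finite inner order, and (through Proposition \ref{semiprime endomorhism}) that $\si|_Z$ is an automorphism of $Z$ of finite order. I first record two bookkeeping facts: $Z=Z(Q)\cap R$, because an element of $Z(Q)$ lying in $R$ commutes with all of $R$; hence $Z^{\si,\de}=Z\cap Z(Q)^{\si,\de}$. Writing $C=Z^{\si,\de}$ and $D=Z(Q)^{\si,\de}$, we then have $C=Z\cap D\subseteq D\subseteq Z(Q)=ZZ^{-1}$. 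Applying Corollary \ref{C semisimple} to the semisimple ring $Q$ provides the basic finiteness input $\udim{D}{Z(Q)}<\infty$.

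Next I would pin down the structure of $D$ using Lemma \ref{Cauchon and Robson}. The simple components of $Q$ are permuted by $\si$ through a permutation $\rho$, and $\de$ stabilises every orbit block $A_j=\bigoplus_{i\in{\cal O}_j}B_i$, so each $A_j$ is both $\si$- and $\de$-stable and its identity $f_j$ satisfies $\si(f_j)=f_j$, $\de(f_j)=0$, i.e. $f_j\in D$. When $|{\cal O}_j|>1$ the derivation $\de|_{A_j}$ is inner, so for a $\si$-fixed central $w$ one gets $\de(w)=0$, whence $Z(A_j)^{\si,\de}=Z(A_j)^{\si}$, which is a field; when $|{\cal O}_j|=1$ the ring $A_j$ is simple, $Z(A_j)$ is a field, and the $(\si,\de)$-constants of a field again form a subfield. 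Thus $D=\bigoplus_j Z(A_j)^{\si,\de}$ is a finite direct product of fields. In particular $D$ is semisimple, its regular elements coincide with its units, and for $a\in C$ one has: $a$ invertible in $Q$ $\Leftrightarrow$ $a$ invertible in $D$ $\Leftrightarrow$ $a$ regular in $R$.

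With this I would attack $(2)$ via supports. If $a\in C$ is not regular in $R$, then some block component of $a$ vanishes; since $\si(a)=a$ and $\si$ realises $\rho$, the index set $I$ where $a$ vanishes is $\rho$-invariant, hence a union of orbits, and $e=\sum_{{\cal O}_j\subseteq I}f_j$ is a nonzero idempotent of $D$ with $ae=0$, so $a$ is a zero divisor in $D$. Statements $(1)$ and $(3)$ I would then obtain as in the proof of Lemma \ref{the untwisted cases}$(2b)$: $C$ is a commutative reduced ring with the \mbox{\rm ACC} on annihilators (a hereditary property), so its quotient ring $CC^{-1}$ is a finite product of fields; combining $\udim{C}{Z}<\infty$ with Lemma \ref{udim and localizations} shows each component of $ZC^{-1}$ is finite dimensional over the corresponding field of $CC^{-1}$, forcing regular elements of $ZC^{-1}$ to be invertible. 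Hence $ZC^{-1}=ZZ^{-1}=Z(Q)$ (the first identity of $(3)$); using $(2)$ one gets $Q=RZ^{-1}=RC^{-1}$, which is $(1)$, while $D\subseteq Z(Q)=ZC^{-1}$ yields $D=CC^{-1}$, the second identity of $(3)$. Finally, once $D=CC^{-1}$ is known, the idempotent $e$ above can be cleared to a nonzero element of $C$ annihilating $a$, completing $(2)$.

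The crux, and the step I expect to be the main obstacle, is exactly this descent from $Q$ to $R$: the claim $\udim{C}{Z}<\infty$ together with the fact that $C$ is an order in $D$. The difficulty is that, unlike the prime case (Theorem \ref{prime caracterization with udim}) where $\T{Q(R)}$ untwists to a single $Q(R)[x;\phi]$ with $\phi$ stabilising the centre, here $\de$ does \emph{not} stabilise $Z$ (only $\si$ does), so one cannot form $Z[x;\phi]$ inside the \mbox{\rm PI} ring and invoke Lemma \ref{the untwisted cases}$(2a)$ directly over $R$; the untwisting lives only block by block inside $Q$. Moreover $(2)$ and $(3)$ are genuinely intertwined, since pulling the idempotent $e\in D$ back into $C$ needs the order property, whereas the order property is precisely what $\udim{C}{Z}<\infty$ is used to furnish. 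I would break this circle through the $\rho$-orbit / minimal-prime structure of $R$: the kernels $P^{(j)}=\ker(R\to A_j)$ are $\si,\de$-stable, so each $R/P^{(j)}$ is a semiprime \mbox{\rm PI} order in $A_j$ on which $\si,\de$ act, and one descends the finiteness from the already established $\udim{D}{Z(Q)}<\infty$ by means of Lemma \ref{udim and localizations}, only afterwards running the finite dimensional argument of the previous paragraph.
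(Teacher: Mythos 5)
Your reductions to $Q=Q(R)$ (Corollary \ref{derivation}), the block decomposition via Lemma \ref{Cauchon and Robson}, the identification of $D=Z(Q)^{\si,\de}$ as a finite direct product of fields, and the orbit argument producing the idempotent $e\in D$ with $ae=0$ are all correct and consistent with the paper. But the load-bearing step, breaking the circularity you yourself flag, is never actually carried out, and it cannot be carried out by the route you indicate. To apply Lemma \ref{udim and localizations} and descend from $\udim{D}{Z(Q)}<\infty$ to $\udim{C}{Z}<\infty$ (writing $C=Z^{\si,\de}$), you need $Z(Q)$ to be the localization $ZC^{-1}$ at an Ore set of elements of $C$ acting regularly on $Z$ --- that is, you need statement $(3)$ and the regularity descent $(2)$ as \emph{inputs}; likewise your Lemma \ref{the untwisted cases}$(2b)$-style argument has the hypothesis that regular elements of $C$ are regular in $Z$, which is again $(2)$. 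The minimal-prime variant does not escape this: in a block $A_j$ with $\vert\mathcal{O}_j\vert>1$ the derivation $\de_j$ is inner only over $A_j$ (the adjoining element lies in the quotient ring, not in the order $R/P^{(j)}$), so over $R/P^{(j)}$ you still face a genuinely mixed $(\si,\de)$ with no ring $Z[x;\phi]$ to feed into Lemma \ref{the untwisted cases}$(2a)$; and the passage from $\bigoplus_j Z(R/P^{(j)})$ back to $Z(R)$ is not udim-controlled. The deeper point is that $\udim{C}{Z}<\infty$ is not a formal consequence of $\udim{D}{Z(Q)}<\infty$: its content is precisely that $C$ is large inside $Z$, and nothing in your proposal produces a single element of $C$ beyond the block idempotents of $D$, which need not lie in $R$ at all.

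What is missing is the paper's direct proof of $(1)$, which uses no uniform-dimension input and is where all the work happens. Given $z\in Z$ regular, one first replaces it by the norm $w=z\si(z)\cdots\si^{n-1}(z)$, where $n$ is the (finite) order of $\si\vert_Z$, to reduce to $z\in Z^{\si}$; then, writing $z=z_1+\cdots+z_k$ along the blocks, one observes that $\de_j(z_j)=0$ automatically whenever $\de_j$ is an inner $\si_j$-derivation --- which by Lemma \ref{Cauchon and Robson}$(3)$ and Theorem \ref{prime case} covers every block except a simple $A_j$ of characteristic $p_j\neq 0$ with $\si_j$ the identity on $Z(A_j)$ --- and in those exceptional blocks uses $\de_j(z_j^{p_j})=p_jz_j^{p_j-1}\de_j(z_j)=0$. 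Hence $z^m\in Z^{\si,\de}$ for $m=\prod_j p_j$, so every central regular element of $R$ becomes invertible in $R(Z^{\si,\de})^{-1}$, which is $(1)$ outright. With $(1)$ in hand, your idempotent argument for $(2)$ closes exactly as you intended (write $e=uz^{-1}$ with $z\in Z^{\si,\de}$ regular in $R$, check $u\in Z^{\si,\de}$, and get $au=aez=0$), and $(3)$ --- together with the finiteness $\udim{C}{Z}<\infty$ you wanted to start from --- then follows; this is the logical order of the paper, where that finiteness appears only afterwards, in Theorem \ref{main semiprime}. The characteristic-$p$ power trick is the one idea your proposal lacks, and without it the order property $Q=R(Z^{\si,\de})^{-1}$ has no source.
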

\begin{proof}
By Proposition \ref{reduction to semisimple}, we can extend $\si$
 and $\de$ to the classical semisimple quotient ring $Q=Q(R)$ of $R$.
Let $Q= \bigoplus_{i=1}^s B_i = \bigoplus_{j=1}^kA_j$, where
$A_j=\bigoplus _{i\in {\cal O}_j}B_i$, be
 a decompositions of $Q$ described in Lemma \ref{Cauchon and Robson}.
 Then, by the same lemma,  we have  $\T{Q}=\bigoplus_{j=1}^k A_j[x_j;\si_j,\de_j]$,
 where $\si_j=\si|_{A_j}$ and $\de_j=\de|_{A_j}$.

(1).  Since $Q=RZ^{-1}$, in order to show that
$Q=R(Z^{\si,\de})^{-1}$, it is enough to prove that any regular
element $z$ from the center of $R$ is invertible in
$R(Z^{\si,\de})^{-1}$.

Let $z\in Z$ be regular in $Z$. By Corollary \ref{derivation},
$\si|_{Z}$ is an automorphism of $Z$ of finite order $n\ge 1$.
Then, the element $w=z\si(z)\cdots \si^{n-1}(z)\in Z$ is regular
and $\si(w)=w$. From this  we easily deduce  that $z$ is
invertible in $R(Z^{\si})^{-1}$.  This means that
$Q=R(Z^{\si})^{-1}$. Therefore,  we may assume that our regular
element $z$ belongs to $Z^{\si}$.

Recall that $Q=\bigoplus_{j=1}^k A_j$, where $\si(A_j)\subseteq
A_j$ and  $\de(A_j)\subseteq A_j$, for all $j$. Thus, in
particular, $Z(Q)^\si=\bigoplus_{j=1}^k Z(A_j)^{\si_j}$ and we can
present our element $z$ in the form  $z=z_1+\cdots +z_k $, where
$z_j\in Z(A_j)^{\si_j}$, for $1\leq j\leq k$.

Notice that  Lemma \ref{Cauchon and Robson}(3) (when $A_j$ is not
simple) and Theorem  \ref{prime case} (when $A_j$ is a simple
ring) imply that  the restriction $\de|_{A_j}=\de_j$ is always an
inner $\si_j$-derivation of $A_j$  but the case  $A_j$ is a simple
ring of characteristic $p_j\ne 0$ and
$\si_j|_{Z(A_j)}=\mbox{id}|_{Z(A_j)}$. In the later case
 $\de_j(z_j^{p_j})=p_jz_j^{p_j-1}\de_j(z_j)=0$, as $z_j\in Z(A_j)=Z(A_j)^{\si_j}$. When
$\de_j$ is an inner $\si_j$-derivation, then $\de_j(z_j)=0$, as
$z_j\in Z(A_j)^{\si_j}$. In this case we set $p_j=1$. Then
$\de_j(z_j^{p_j})=0$, for any $1\leq j\leq k$. Let
$m=\prod_{j=1}^kp_j$. Then, putting together the above
information, we get $\de(z^m)=\de_1(z_1^m)+\ldots +
\de_k(z_k^m)=0$. This shows that the regular element  $z^m $
belongs to $ Z^{\si,\de}$ and proves that $z$ is  invertible in
$R(Z^{\si,\de})^{-1}$, i.e. $Q$ is a localization of $R$ with
respect to regular elements from $Z^{\si,\de}$.

(2). Let $1=e_1+\ldots + e_k$ be a decomposition of $1\in Q$ into
a sum of central primitive idempotents (i.e. $B_i=e_iR$, for all
$i$).

Let us fix an element $a\in Z^{\si,\de}$   which is regular in
$Z^{\si,\de}$ and $b\in Z$ such that $ab=0$.

Assume that  $b\ne 0$.  Then, there exists an index $s$ such that
$ae_s=0$. Eventually changing the numeration, we may assume that
$s=1$ and $\mathcal{A}=\{e_1,\ldots, e_l\}$ is the orbit of $e_1$
under the action of $\si$ on the set $\{e_i\mid 1\leq i\leq k\}$.
Since $\si(a)=a$, we have $ac=0$, where
$c=\sum_{e_i\in\mathcal{A}}e_i$. Observe  that the element $c$ is
$(\si,\de)$-invariant.

By the statement $(1)$, $Q=R(Z^{\si,\de})^{-1}$. Therefore,  there
exist an element $z\in Z^{\si,\de}$ regular in $R$ and $0\ne u\in
R$ such that $c=uz^{-1}$. Using the fact that the elements $c$ and
$z$ are central $(\si,\de)$-invariant and $z$ is regular in $R$,
one can check that $u\in Z^{\si,\de}$. Since $a$ is regular in $
Z^{\si,\de}$ we obtain  $0\ne au=acz=0$ This contradiction shows
that $b=0$ and this completes the proof of $(2)$.

The statement (3) is a direct consequence of the fact that
$Z(Q)=ZZ^{-1}$ and statements $(1)$ and $(2)$.
\end{proof}

Now we are in position to prove the main theorem of this section:
\begin{theorem}
\label{main semiprime} Suppose that  $R$ is a semiprime \mbox{\rm
PI} ring with the \mbox{\rm ACC} on annihilators and $\si$ is an
injective endomorphism of $R$. Let $Z$ denote the center of $R$
and $Q=Q(R)$. The following conditions are equivalent:
\begin{enumerate}
  \item $\T{R}$ is a \mbox{\rm PI} ring.
    \item $\udim{Z^{\si,\de}}{Z}=\udim{Z(Q)^{\si,\de}}{Z(Q)}$ is finite.
    \item The center of $\Tn$ contains a nonconstant polynomial with
  a regular leading coefficient.
   \end{enumerate}
   If one of the above equivalent conditions holds, then every regular
   element from $Z^{\si,\de}$ is regular in $R$,
   $Q=R(Z^{\si,\de})^{-1}$ and
   $Z(Q)^{\si,\de} = Z^{\si,\de}(Z^{\si,\de})^{-1}$.
\end{theorem}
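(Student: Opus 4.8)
The plan is to reduce everything to the semisimple classical quotient ring $Q=Q(R)$ and then assemble the statement from Corollary \ref{C semisimple} and Lemma \ref{le}. Since $R$ is a semiprime \mbox{\rm PI} ring with the \mbox{\rm ACC} on annihilators it is Goldie, so by Proposition \ref{classics} $Q$ is semisimple \mbox{\rm PI}; by Proposition \ref{reduction to semisimple} the maps $\si,\de$ extend to $Q$ and $\Tn$ is \mbox{\rm PI} if and only if $\T{Q}$ is \mbox{\rm PI}. Throughout I write $\mathcal S$ for the set of regular elements of the commutative ring $Z^{\si,\de}$. The first observation is that the chain of equivalences $[\Tn \text{ is PI}]\Leftrightarrow[\T{Q}\text{ is PI}]\Leftrightarrow[\udim{Z(Q)^{\si,\de}}{Z(Q)}<\infty]\Leftrightarrow[Z(\T{Q})\text{ has a nonconstant polynomial with invertible leading coefficient}]$ is immediate from Proposition \ref{reduction to semisimple} and from Corollary \ref{C semisimple} applied to the semisimple ring $Q$. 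This already gives $(2)\Rightarrow(1)$, since the finiteness of $\udim{Z(Q)^{\si,\de}}{Z(Q)}$ built into $(2)$ forces $\T{Q}$, hence $\Tn$, to be \mbox{\rm PI}.

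For $(1)\Rightarrow(2)$ I would feed $(1)$ into Lemma \ref{le}, which supplies $Q=R\mathcal S^{-1}$, $Z(Q)=Z\mathcal S^{-1}$, $Z(Q)^{\si,\de}=Z^{\si,\de}\mathcal S^{-1}$, and, crucially, that every element of $\mathcal S$ is regular in $R$ (so $Z$ is $\mathcal S$-torsion free over $Z^{\si,\de}$). Applying Lemma \ref{udim and localizations} to the Ore set $\mathcal S$ in $Z^{\si,\de}$ then identifies
$$\udim{Z^{\si,\de}}{Z}=\udim{Z^{\si,\de}\mathcal S^{-1}}{Z\mathcal S^{-1}}=\udim{Z(Q)^{\si,\de}}{Z(Q)},$$
and the right-hand side is finite by Corollary \ref{C semisimple}. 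This is where the real content lies: everything rests on Lemma \ref{le} describing $Q$, $Z(Q)$ and $Z(Q)^{\si,\de}$ as localizations of $R$, $Z$ and $Z^{\si,\de}$ at one and the same Ore set $\mathcal S$, which is exactly what makes Lemma \ref{udim and localizations} applicable verbatim.

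Finally I would prove $(1)\Leftrightarrow(3)$ by transferring central polynomials between $\Tn$ and $\T{Q}$, using that a regular element of $R$ is invertible in $Q$ and that an element of $R$ invertible in $Q$ is regular in $R$. Note first that $Z(\Tn)\subseteq Z(\T{Q})$: a central $f\in\Tn$ commutes with every element of $R$, hence with every regular $c\in R$, so $fc^{-1}=c^{-1}f$ in the localization $\T{Q}$ of $\Tn$ at the regular elements of $R$. Thus a nonconstant central polynomial of $\Tn$ with regular leading coefficient becomes one in $\T{Q}$ with invertible leading coefficient, giving $(3)\Rightarrow(1)$ through Corollary \ref{C semisimple} and Proposition \ref{reduction to semisimple}. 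For $(1)\Rightarrow(3)$ I would take the nonconstant $g\in Z(\T{Q})$ with invertible leading coefficient $b$ produced by Corollary \ref{C semisimple}, and clear denominators with a single regular $s\in\mathcal S$, which is possible because all coefficients of $g$ lie in $Q=R\mathcal S^{-1}$ by Lemma \ref{le}$(1)$; since $s\in Z^{\si,\de}$ is central in $\T{Q}$, the product $sg$ stays central, lands in $\Tn$, and its leading coefficient $sb$ is invertible in $Q$ and therefore regular in $R$.

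The supplementary assertions of the theorem — that regular elements of $Z^{\si,\de}$ are regular in $R$, that $Q=R(Z^{\si,\de})^{-1}$, and that $Z(Q)^{\si,\de}=Z^{\si,\de}(Z^{\si,\de})^{-1}$ — are then nothing but Lemma \ref{le}, which applies the moment $(1)$ is known to hold. I expect the only genuinely delicate point to be the denominator-clearing together with the verification that centrality is preserved under the (non-central) Ore localization $\Tn\hookrightarrow\T{Q}$; once Lemma \ref{le} is available, the uniform-dimension bookkeeping is routine.
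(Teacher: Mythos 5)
Your proposal is correct and follows essentially the same route as the paper's own proof: reduction to $\T{Q}$ via Proposition \ref{reduction to semisimple}, the semisimple criterion of Corollary \ref{C semisimple}, Lemma \ref{le} combined with Lemma \ref{udim and localizations} for the uniform-dimension equality, and clearing denominators by a single regular element of $Z^{\si,\de}$ to pass from an invertible leading coefficient in $\T{Q}$ to a regular one in $\Tn$. The only difference is cosmetic organization (pairwise equivalences with $(1)$ rather than the paper's cycle $(1)\Rightarrow(2)\Rightarrow(3)\Rightarrow(1)$), and you correctly flag the torsion-freeness hypothesis needed for Lemma \ref{udim and localizations}, which the paper leaves implicit.
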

\begin{proof}
$(1)\Rightarrow (2)$. Suppose (1) holds. Then, by  Proposition
\ref{reduction to semisimple}, $\T{Q}$ is a PI ring  and Corollary
\ref{C semisimple} shows that $\udim{Z(Q)^{\si,\de}}{Z(Q)}<
\infty$.  The equality
$\udim{Z^{\si,\de}}{Z}=\udim{Z(Q)^{\si,\de}}{Z(Q)}$ is a direct
consequence of Lemmas \ref{le} and \ref{udim and localizations}.

$(2)\Rightarrow (3)$. Suppose (2) holds. Then, by Corollary \ref{C
semisimple} applied to the ring $Q$, the ring $\T{Q}$ satisfies a
polynomial identity and  there exists a nonconstant polynomial $f$
in the center of $\T{Q}$ with invertible leading coefficient. In
particular, $\Tn$ is also a PI ring and Lemma \ref{le} shows that
 $Q=R(Z^{\si,\de})^{-1}$. Hence,  there exists an  element
$z\in Z^{\si,\de}$ regular in $R$, such that $zf\in \Tn$. Clearly
$zf$ is central in $\Tn$ and has a regular leading coefficient.

$(3)\Rightarrow (1)$. Notice that $Z(\Tn)\subseteq Z(\T{Q})$ and
every regular element in $R$ is invertible in $Q$. Thus, the
statement (3) together with Corollary \ref{C semisimple} show that
the ring $\T{Q}$ satisfies a polynomial identity. This gives the
thesis.

The above shows that conditions $(1)\div(3)$ are equivalent. The
remaining statements from the theorem are direct consequences of
Lemma \ref{le}.
\end{proof}

Let us remark that the assumption in the above theorem, that the
ring $R$ satisfies the ACC condition on annihilators, is
essential.
\begin{example}
Let $R=\prod_{i=1}^{\infty} \mathbb{C}$, where $\mathbb{C}$
denotes the field of complex numbers, and let $\si$ be the
automorphism of $R$ which is the complex conjugation on every
component $\mathbb{C}$ of $R$. Then $R[x;\si]$ is a PI ring,
$R^\si=\prod_{i=1}^{\infty} \mathbb{R}$ and $\udim{R^\si}{R}$ is
infinite.
\end{example}

\section{\sc \normalsize  Noetherian Coefficient Ring}
Throughout this section $\si$ stands for an arbitrary, not
necessarily injective, endomorphism of a ring $R$.
$\mathcal{B}(R)$  denotes  the prime radical of $R$.

 The following result is due to Mushrub (Cf.\cite{M1}):
 \begin{lemma}\label{Mushrub}
  Suppose that $R$ is a noetherian ring and $\ker\si\subseteq \mathcal{B}(R)$, then $\si(\mathcal{B}(R))\subseteq
  \mathcal{B}(R)$.
 \end{lemma}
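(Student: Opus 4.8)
The plan is to use that, for a noetherian ring, the prime radical $N:=\mathcal{B}(R)$ is nilpotent, say $N^m=0$, and is in fact the largest nilpotent ideal of $R$; hence it will suffice to exhibit a nilpotent two-sided ideal of $R$ containing $\si(N)$. As a preliminary I would record the auxiliary inclusion $\si^{-1}(N)\subseteq N$: since $\si\bigl((\si^{-1}(N))^m\bigr)=\bigl(\si(\si^{-1}(N))\bigr)^m\subseteq N^m=0$, we get $(\si^{-1}(N))^m\subseteq\ker\si\subseteq N$ and therefore $(\si^{-1}(N))^{m^2}=0$; being nilpotent, $\si^{-1}(N)$ sits inside $N$. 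Running the same computation on $\si^n$ shows $\ker\si^n\subseteq N$ for all $n\ge1$. This says that $\si$ \emph{reflects} $N$; the actual content of the lemma is the opposite-looking inclusion $\si(N)\subseteq N$, equivalently $N\subseteq\si^{-1}(N)$, equivalently the statement that $\si$ descends to an (injective) endomorphism of the semiprime ring $R/N$.

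For the reduction I would set $S:=\si(R)$. Because $\ker\si\subseteq N$ lies in every prime ideal of $R$, the primes of $S\cong R/\ker\si$ are exactly the images of the primes of $R$, so $\mathcal{B}(S)=\si(N)$; thus $\si(N)$ is an ideal of the noetherian ring $S$ with $\si(N)^m=0$, i.e. $\si(N)$ is a nilpotent subring of $R$. Everything then comes down to showing that the two-sided ideal $I:=R\,\si(N)\,R$ generated by it in $R$ is nilpotent, for once $I$ is nilpotent the maximality of $N$ gives $\si(N)\subseteq I\subseteq N$, as required.

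The main obstacle is precisely this last step: a nilpotent subring need not generate a nilpotent ideal, so the noetherian hypothesis must intervene essentially here rather than through a formal product estimate. To attack it I would pass to the finitely many minimal primes $P_1,\dots,P_r$, for which $N=\bigcap_i P_i$, and prove $\si(N)\subseteq P_i$ for each $i$; since $R/P_i$ is prime noetherian, hence Goldie with a simple Artinian quotient ring, and since $N^m=0$ forces the image of $N$ in $R/\si^{-1}(P_i)$ to be a nilpotent ideal, it is enough to show that each $\si^{-1}(P_i)$ is a semiprime ideal. Here I would feed in the ACC: the kernel chain $\ker\si\subseteq\ker\si^2\subseteq\cdots$ stabilizes at some $\ker\si^t$, whence $\si^t(R)\cap\ker\si=0$ and $\si$ restricts to an \emph{injective} endomorphism of the noetherian subring $\si^t(R)$ with prime radical $\si^t(N)$. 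Transporting the strong nilpotency of elements of $N$ through $\si$ on this large subring, and combining it with the Goldie/order structure of the prime quotients $R/P_i$ and the identification $\si(N)=\mathcal{B}(S)$, is what should force the image of $\si(N)$ in each $R/P_i$ to vanish. This Goldie-theoretic promotion of the nilpotent subring $\si(N)=\mathcal{B}(\si(R))$ to an ideal contained in $N$ is the technical heart of the argument.
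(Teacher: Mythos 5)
You have not proved the lemma, and the paper does not prove it either: it is quoted verbatim from Mushrub \cite{M1}, precisely because the statement is a genuine theorem rather than a formal manipulation. Your preliminaries are all correct: writing $N=\mathcal{B}(R)$ with $N^m=0$, the ideal $\si^{-1}(N)$ satisfies $(\si^{-1}(N))^m\subseteq\ker\si\subseteq N$, hence is nilpotent and contained in $N$; inductively $\ker\si^n=\si^{-1}(\ker\si^{n-1})\subseteq N$ for all $n$; $\si(N)=\mathcal{B}(\si(R))$ is a nilpotent subring of $R$; and the kernel chain stabilizes at some $t$, making $\si$ injective on the noetherian subring $\si^t(R)$. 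But, as you yourself concede, all of this points in the wrong direction. The step you call ``the technical heart'' is presented only as intent (``is what should force\dots''), and your three reformulations of it --- the ideal $R\,\si(N)\,R$ is nilpotent; $\si(N)\subseteq P_i$ for every minimal prime $P_i$; each $\si^{-1}(P_i)$ is semiprime --- are each exactly equivalent to the lemma, so nothing has been gained.

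Moreover, the ingredients you propose for closing the gap do not assemble as stated, because the obstructions are real. A nilpotent subring of a noetherian ring need not lie in the prime radical (the subring $ke_{12}$ of $M_2(k)$), so the identification $\si(N)=\mathcal{B}(\si(R))$ buys nothing by itself. The preimage of a prime under a non-surjective homomorphism need not be semiprime: the inclusion of the upper triangular matrices $T_2(k)$ into $M_2(k)$ pulls the prime ideal $0$ back to the non-semiprime ideal $0$; so ``show $\si^{-1}(P_i)$ is semiprime'' has no generic reason to hold and must be wrung out of the noetherian hypothesis by an actual argument. And ``transporting strong nilpotency through $\si$'' fails for exactly the same reason the lemma is nontrivial: given a sequence $b_0=\si(a)$, $b_{n+1}\in b_nRb_n$, the interpolating elements of $R$ need not lie in $\si(R)$, so the sequence cannot be pulled back along $\si$. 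One genuinely useful observation is buried in your stabilization step: since $\si$ restricts to an injective endomorphism of $\si^t(R)$ with $\mathcal{B}(\si^t(R))=\si^t(N)$, the lemma \emph{for injective endomorphisms} would give $\si^{t+1}(N)\subseteq\si^t(N)$, and then for $a\in N$, writing $\si^{t+1}(a)=\si^t(b)$ with $b\in N$ gives $\si(a)-b\in\ker\si^t\subseteq N$, hence $\si(a)\in N$; so your reductions do legitimately shrink the problem to the injective case. But the injective case is still Mushrub's theorem, and your proposal contains no argument for it.
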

 \begin{prop}\label{injective modulo radical}
  Suppose  $R$ is noetherian and $\ker \si\subseteq \mathcal{B}(R)=B$.
  Then $\si^{-1}(B)=B$ and $\si$ induces an injective endomorphism
  $\bar{\si}$  of $R/B$.
 \end{prop}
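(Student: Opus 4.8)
The plan is to combine Mushrub's Lemma~\ref{Mushrub}, which already supplies one inclusion, with the nilpotency of the prime radical in a noetherian ring. First I would record that, by Lemma~\ref{Mushrub}, $\si(B)\subseteq B$; in particular $\bar\si(r+B):=\si(r)+B$ is a well-defined endomorphism of $R/B$, and the inclusion $\si(B)\subseteq B$ is just a restatement of $B\subseteq\si^{-1}(B)$. It thus remains to prove the reverse inclusion $\si^{-1}(B)\subseteq B$, after which the injectivity of $\bar\si$ is immediate, since $\ker\bar\si=\si^{-1}(B)/B$.

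To obtain $\si^{-1}(B)\subseteq B$, I would set $I=\si^{-1}(B)$ and note that, being the preimage of the ideal $B$ under the ring homomorphism $\si$, the set $I$ is an ideal of $R$. Since $R$ is noetherian, its prime radical $B$ is nilpotent, say $B^n=0$ for some $n\ge 1$. By definition $\si(I)\subseteq B$, so for any product $a_1\cdots a_n$ with $a_i\in I$ we get $\si(a_1\cdots a_n)=\si(a_1)\cdots\si(a_n)\in B^n=0$; hence $\si(I^n)=0$, which forces $I^n\subseteq\ker\si\subseteq B$ by hypothesis.

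Now I would pass to the semiprime quotient $R/B$. The image $\bar I=(I+B)/B$ satisfies $\bar I^{\,n}=(I^n+B)/B=0$, so $\bar I$ is a nilpotent ideal of $R/B$. But $R/B$ is semiprime, and a semiprime ring has no nonzero nilpotent ideals; therefore $\bar I=0$, that is, $I\subseteq B$. Combined with the first paragraph this gives $\si^{-1}(B)=B$, and consequently $\ker\bar\si=\si^{-1}(B)/B=0$, so $\bar\si$ is an injective endomorphism of $R/B$.

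The only step that is not formal is the passage $I^n\subseteq B\Rightarrow I\subseteq B$: it rests precisely on the defining feature of the prime radical, namely that $R/\mathcal{B}(R)$ is semiprime and hence kills nilpotent ideals. The existence of the exponent $n$ (nilpotency of $B$) is exactly where the noetherian hypothesis is used, mirroring its role in Lemma~\ref{Mushrub}; everything else is bookkeeping about preimages and quotients.
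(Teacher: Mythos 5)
Your proposal is correct and takes essentially the same approach as the paper: Mushrub's lemma supplies $\si(B)\subseteq B$, and the reverse inclusion $\si^{-1}(B)\subseteq B$ is extracted from the nilpotency of $B$ (noetherian hypothesis) together with $\ker\si\subseteq B$. The only cosmetic difference is that the paper verifies directly that $\si^{-1}(B)$ is a nilpotent ideal of $R$ (using nilpotency of $\ker\si$) and hence lies in the prime radical, whereas you show $(\si^{-1}(B))^n\subseteq B$ and then kill its image in the semiprime quotient $R/B$ --- the same mechanism phrased in the quotient.
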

 \begin{proof} By Lemma \ref{Mushrub}, $\si(B)\subseteq B$. Thus
 $\si$ induces an endomorphism $\bar{\si}$ of $R/B$.

  By assumption, $\ker \si$ is a nilpotent ideal of $R$. Then, it is easy
  to check that also $\si^{-1}(I)$ is a nilpotent ideal, for any nilpotent ideal  $I$
   of $R$. Thus, in particular, $\si^{-1}(B)\subseteq  B$.
  Then $B\subseteq \si^{-1}(\si(B))\subseteq \si^{-1}(B)\subseteq
  B$ and $\si^{-1}(B)=B$ follows. This, in turn, implies that
  $\bar{\si}$ is injective.
 \end{proof}

\begin{prop}\label{ker contained in B}
 Suppose that $\si$ is an endomorphism of a   noetherian \mbox{\rm PI} ring $R$
 such that $\ker \si\subseteq B=\mathcal{B}(R)$. Then $\A{R}$ is a \mbox{\rm PI} ring
 if and only if the restriction of $\bar{\si}$ to   the center $Z$ of $R/B$ is an
 automorphism of $Z$ of finite order.
\end{prop}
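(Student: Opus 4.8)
The plan is to reduce the statement to the already-established semiprime case (Proposition~\ref{semiprime endomorhism}) by passing to the quotient $R/B$, using the PI-property's compatibility with filtrations and quotients. First I would observe that, by Proposition~\ref{injective modulo radical}, the hypothesis $\ker\si\subseteq B$ guarantees that $\si$ descends to an \emph{injective} endomorphism $\bar{\si}$ of the quotient $R/B$, and that $R/B$ is a semiprime noetherian PI ring (hence satisfies the ACC on annihilators, being Goldie). This is exactly the setting in which Proposition~\ref{semiprime endomorhism} applies: for the injective endomorphism $\bar{\si}$ of the semiprime ring $R/B$, the Ore extension $(R/B)[x;\bar{\si}]$ is PI if and only if $\bar{\si}|_Z$ is an automorphism of $Z=Z(R/B)$ of finite order. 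So the crux is to prove the equivalence
\[
\A{R}\text{ is PI}\quad\Longleftrightarrow\quad (R/B)[x;\bar{\si}]\text{ is PI}.
\]

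For the forward direction, I would use the fact that $B[x;\si]$ is a nilpotent ideal of $\A{R}$: since $B$ is a nilpotent ideal of $R$ with $\si(B)\subseteq B$, one checks that $B[x;\si]$ is a (two-sided) ideal of $\A{R}$ whose some power lands in $B^m[x;\si]=0$. (The noncommutative twisting requires a short computation, but $\si(B)\subseteq B$ makes $B[x;\si]$ closed under the multiplication rule, and nilpotency of $B$ forces nilpotency of the ideal.) Then $\A{R}/B[x;\si]\cong (R/B)[x;\bar{\si}]$, and a quotient of a PI ring is PI, giving the implication. The reverse direction is the one requiring care: knowing that the quotient $(R/B)[x;\bar{\si}]$ is PI, together with nilpotency of the ideal $B[x;\si]$, I want to conclude that $\A{R}$ itself is PI. This is the classical fact that a ring which is an extension of a PI ring by a nilpotent ideal is again PI: if $N$ is a nilpotent ideal of a ring $S$ with $N^m=0$ and $S/N$ is PI, then $S$ is PI. The standard argument multiplies a multilinear identity of $S/N$ by itself $m$ times (substituting products of $m$ blocks of variables) to absorb the nilpotent part; I would invoke this directly (e.g.\ from Rowen~\cite{R}).

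The main obstacle I anticipate is verifying cleanly that $B[x;\si]$ is genuinely a two-sided ideal of $\A{R}$ and that it is nilpotent, given that $\si$ need not be injective. The subtlety is that the multiplication rule $xa=\si(a)x$ pulls coefficients through $\si$, so for $B[x;\si]$ to be a right ideal one needs $\si(B)\subseteq B$ — which is precisely the content of Lemma~\ref{Mushrub}, valid here because $\ker\si\subseteq B$. Once stability of $B$ under $\si$ is secured, the graded/filtered structure of $\A{R}$ shows that $(B[x;\si])^m\subseteq B^m[x;\si]=0$, so nilpotency transfers. With these two technical points in hand, the equivalence with $(R/B)[x;\bar{\si}]$ being PI is immediate, and then Proposition~\ref{semiprime endomorhism} applied to the semiprime ring $R/B$ and its injective endomorphism $\bar{\si}$ translates this into the stated condition that $\bar{\si}|_Z$ is an automorphism of finite order, completing the proof.
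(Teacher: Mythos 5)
Your proposal is correct and follows essentially the same route as the paper's own proof: both pass to $R/B$ via Proposition~\ref{injective modulo radical}, observe that $\si(B)\subseteq B$ makes $B[x;\si]$ a nilpotent ideal of $R[x;\si]$ with quotient $(R/B)[x;\bar{\si}]$, use the standard fact that the PI property lifts over a nilpotent ideal, and conclude by Proposition~\ref{semiprime endomorhism}. The only difference is that you spell out the verifications (stability of $B$ under $\si$ via Lemma~\ref{Mushrub}, the inclusion $(B[x;\si])^m\subseteq B^m[x;\si]$, and the multilinear-identity argument for lifting PI) which the paper leaves implicit.
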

\begin{proof}
 By Proposition \ref{injective modulo radical}, $\si$ induces an
 injective endomorphism $\bar{\si}$ of $R/B$. Since $\si(B)\subseteq
 B$ and $B$ is nilpotent, $\A{B}$ is a nilpotent ideal of $\A{R}$.
 Therefore, as  $(R/B)[x;\bar{\si}]$ is isomorphic to $\A{R}/\A{B}$,
 $\A{R}$ is a \mbox{\rm PI} ring iff $(R/B)[x;\bar{\si}]$ is  \mbox{\rm PI}. Now the thesis is a
 direct consequence of Proposition \ref{semiprime endomorhism}.
\end{proof}

If $\si$ is an automorphism of $R$, then
$\si(\mathcal{B}(R))=\mathcal{B}(R)$ and the above proposition is
exactly Corollary 10\cite{DS}, in this case.

 Notice that, by Lemma \ref{known facts}(2),  the ring $(R/B)[x;\bar{\si}]$ from the proof of the
 above theorem is semiprime. Therefore we have:
 \begin{remark}
Suppose that $\si$ is an endomorphism of a   noetherian \mbox{\rm
PI} ring $R$ such that $\ker \si\subseteq \mathcal{B}(R)$. Then
$B(\A{R})=\A{\mathcal{B}(R)}$.
 \end{remark}
It is known that if the ring $R$ is not noetherian, then
$B(\A{R})$ is not necessarily an extension of an ideal of the
coefficient ring $R$ even when $\si$ is an automorphism of $R$.
However, it was observed by Pascaud and Valette in \cite{PV} that
$B(\A{R})=\A{\mathcal{B}(R)}$, provided $\si$ is an automorphism
of $R$ and the Ore extension $\A{R}$ satisfies a polynomial
identity.

The following lemma is crucial in considering the case when $\ker
\si $ is not included in the radical $\mathcal{B}(R)$ of $R$.
\begin{lemma}\label{lifting PI}
 Let $I$ be an ideal of $R$ such that $\si(I)\subseteq I$. Suppose
 that
 $R$ and $\A{(R/I)}$ are \mbox{\rm PI} rings. Then $T=\A{R}/(\A{I}x)$ is also a
 \mbox{\rm PI} ring.
\end{lemma}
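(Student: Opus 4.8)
The plan is to realize $T$ as a subring of a direct product of two rings that are \mbox{\rm PI} by hypothesis, using the standard subdirect embedding for a quotient by an intersection of two ideals. First I would check that $\A{I}=I[x;\si]$ is a two-sided ideal of $\A{R}$: since $I$ is an ideal of $R$ and $\si(I)\subseteq I$, the relation $xa=\si(a)x$ keeps all coefficients inside $I$, so $\A{I}$ is stable under left and right multiplication by $\A{R}$. Reducing coefficients modulo $I$ gives a surjection $\A{R}\to \A{(R/I)}$ with kernel $\A{I}$, where the induced map on $R/I$ is $\bar\si$; hence $\A{R}/\A{I}\cong \A{(R/I)}=(R/I)[x;\bar\si]$, which is \mbox{\rm PI} by assumption. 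Separately, the augmentation $\epsilon\colon\A{R}\to R$ sending $\sum a_ix^i$ to its constant term $a_0$ is a ring homomorphism, because every cross term in a product carries a positive power of $x$, so the constant term of a product is the product of the constant terms. Its kernel is $\A{R}x$, which is a left ideal trivially and a right ideal since $x\A{R}\subseteq \A{R}x$; thus $\A{R}/\A{R}x\cong R$, again \mbox{\rm PI} by assumption.

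The key computation is the identity $\A{I}x=\A{I}\cap\A{R}x$. The inclusion $\A{I}x\subseteq\A{I}\cap\A{R}x$ is immediate, and conversely any $p\in\A{I}\cap\A{R}x$ has the form $p=\sum_{j\geq 1}b_jx^j$ with each $b_j\in I$, whence $p=\bigl(\sum_{j\geq 1}b_jx^{j-1}\bigr)x\in\A{I}x$. In particular $\A{I}x$ is a two-sided ideal, being an intersection of two two-sided ideals, so $T=\A{R}/\A{I}x$ is well defined. The canonical map
$$\A{R}\big/(\A{I}\cap\A{R}x)\longrightarrow \bigl(\A{R}/\A{I}\bigr)\times\bigl(\A{R}/\A{R}x\bigr),\qquad f+\A{I}x\longmapsto(f+\A{I},\,f+\A{R}x),$$
has kernel exactly $\A{I}\cap\A{R}x$ and is therefore injective. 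Hence $T$ embeds into $\A{(R/I)}\times R$.

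It then remains to note that this product is \mbox{\rm PI} and that any subring of a \mbox{\rm PI} ring is \mbox{\rm PI}. For the product, if $f$ is an identity of $\A{(R/I)}$ and $g$ is an identity of $R$, taken in disjoint sets of variables, then on $\A{(R/I)}\times R$ the polynomial $f$ evaluates into $0\times R$ and $g$ into $\A{(R/I)}\times 0$, so the nonzero free-algebra element $fg$ annihilates the product and is a polynomial identity for it. I do not anticipate a genuine obstacle: the substance of the argument is the identity $\A{I}x=\A{I}\cap\A{R}x$ together with the subdirect embedding, and the only point deserving a little care is checking that $\A{R}x$ really is two-sided with quotient $R$, which follows from $x\A{R}\subseteq\A{R}x$. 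Note that injectivity of $\si$ is not needed anywhere, consistent with the fact that $\si$ is an arbitrary endomorphism in this section.
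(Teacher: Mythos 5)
Your proof is correct and follows essentially the same route as the paper: the paper likewise embeds $T$ subdirectly into $R\oplus \A{(R/I)}$, phrasing your key identity $\A{I}x=\A{I}\cap \A{R}x$ as the statement that the ideals $I$ and $(x)$ of $T$ intersect trivially, with $T/(x)\simeq R$ and $T/I\simeq \A{(R/I)}$. Your version merely carries out the verification upstairs in $\A{R}$ (augmentation map, two-sidedness of $\A{I}$ and $\A{R}x$), making explicit some details the paper leaves to the reader.
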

\begin{proof} Since $\si(I)\subseteq I$, $\si$ induces an
endomorphism, also denoted by $\si$, on the factor ring $R/I$.

 Notice that  $R\cap \A{I}x=0$, so we can consider $R$ as a subring of $T$.
 Then $I$ is also an ideal of $T$ and $T/I$ is isomorphic to
 $\A{(R/I)}$.

 Let $(x)$ denote the ideal of $T$ generated by the natural image
 of $x$ in $T$. Then $T/(x)\simeq R$. Therefore, as $I\cap (x)=0$,
 there exists an embedding of $T$ into $R\oplus \A{(R/I)}$ which, by
 assumption,  is a \mbox{\rm PI} ring.
\end{proof}
Notice that an endomorphism $\si$ of $R$ induces an endomorphism
of the factor ring $R/\ker\si$. This endomorphism will also be
denoted by $\si$. The above lemma gives us immediately:
\begin{corollary}\label{reduction by kernel}
 Suppose that $R$ is a \mbox{\rm PI} ring. The following conditions are
 equivalent:
\begin{enumerate}
  \item $\A{R}$ is a \mbox{\rm PI} ring;
  \item for any $n\geq 0$, $\A{(R/\ker\si^n)}$ is a \mbox{\rm PI} ring;
  \item there exists $n\geq 0$ such that $\A{(R/\ker\si^n)}$ is a \mbox{\rm PI} ring.
\end{enumerate}

\end{corollary}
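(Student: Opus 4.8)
The plan is to prove the cycle $(1)\Rightarrow(2)\Rightarrow(3)\Rightarrow(1)$, with all the real content in the last implication. The two easy steps come first: each $\ker\si^n$ is a $\si$-invariant ideal, since $\si^n(a)=0$ forces $\si^n(\si(a))=\si(\si^n(a))=0$, so the coefficientwise map $\A{R}\to\A{(R/\ker\si^n)}$ is a well-defined surjective ring homomorphism. Hence $\A{(R/\ker\si^n)}$ is a homomorphic image of $\A{R}$, and a homomorphic image of a PI ring is PI; this gives $(1)\Rightarrow(2)$. The implication $(2)\Rightarrow(3)$ is immediate, the index set being nonempty (e.g. $n=0$, where $R/\ker\si^0=R$).

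For $(3)\Rightarrow(1)$ I would apply Lemma \ref{lifting PI} to the $\si$-invariant ideal $I=\ker\si^n$: its two hypotheses, that $R$ and $\A{(R/\ker\si^n)}$ are PI, hold by the standing assumption on $R$ and by condition $(3)$, so the lemma yields that $T=\A{R}/(\A{(\ker\si^n)}x)$ is a PI ring. It then remains to descend from $T$ back to $\A{R}$, and the key observation is that the ideal $N=\A{(\ker\si^n)}x$ is nilpotent. Indeed, in $\A{R}$ one has $x^ja=\si^j(a)x^j$, so on normalizing a product of $n+1$ elements of $N$ (each of the form $\sum_{i\ge 1}c_ix^i$ with $c_i\in\ker\si^n$) the coefficient contributed by the last factor is hit by $\si^m$ with $m\ge n$, hence annihilated since $c_i\in\ker\si^n$; thus $N^{n+1}=0$. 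Because an extension of a PI ring by a nilpotent ideal is again PI (multiply a multilinear identity satisfied by $T=\A{R}/N$ by itself on disjoint sets of variables $n+1$ times to obtain an identity landing in $N^{n+1}=0$), it follows that $\A{R}$ is PI, closing the cycle.

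The main obstacle is precisely this bridging step: Lemma \ref{lifting PI} does not hand us $\A{R}$ directly, but only the quotient $T=\A{R}/N$, so one must isolate the defect ideal $N=\A{(\ker\si^n)}x$ and verify that it is nilpotent. That nilpotency is exactly where the specific hypothesis ``$\ker\si^n$'' (rather than an arbitrary $\si$-invariant ideal) is used, through the interplay of the twisting rule $xa=\si(a)x$ with the fact that $\si^n$ vanishes on $\ker\si^n$. An alternative, equally clean route is to treat first the single layer $I=\ker\si$, for which $\A{(\ker\si)}x$ is even square-zero (as $xa=\si(a)x=0$ when $a\in\ker\si$), and then induct down the chain $\ker\si^n\supseteq\cdots\supseteq\ker\si\supseteq 0$, using at each step that the endomorphism induced on $R/\ker\si^{m}$ has kernel $\ker\si^{m+1}/\ker\si^{m}$ and factor ring isomorphic to $R/\ker\si^{m+1}$.
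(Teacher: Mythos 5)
Your proof is correct and takes essentially the same route as the paper: the easy implications via the coefficientwise quotient map, and for $(3)\Rightarrow(1)$ an application of Lemma \ref{lifting PI} followed by the nilpotency of $N=\A{(\ker\si^n)}x$ — your monomial normalization is just an unwinding of the paper's observation that $x^n\ker\si^n=0$, whence $N^{n+1}=0$ — and the standard lifting of a polynomial identity through a nilpotent ideal. The inductive single-layer variant you sketch at the end is a valid repackaging of the same idea rather than a genuinely different argument.
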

\begin{proof}
 The implications $(1)\Rightarrow (2)\Rightarrow (3)$ are clear.

 $(3)\Rightarrow (1)$. Suppose $\A{(R/\ker\si^n)}$ is \mbox{\rm PI} for some $n\geq
 0$. We may assume $n\geq 1$. Let $I=\ker\si^n$. Then, by Lemma \ref{lifting PI}, the ring
 $T=\A{R}/(\A{I}x)$ is \mbox{\rm PI}. Since $I=\ker \si^n$, $x^nI=0$.
 Therefore $(\A{I}x)^{n+1}=0$. This implies that $\A{R}$ satisfies
 a polynomial identity,
\end{proof}

 When $\si$ is an endomorphism, then $\{\ker \si^k\}_{k\geq 1}$
is an increasing sequence of ideals of $R$. Thus, when $R$ is
noetherian, there is $n\geq 1$ such that $\ker\si^n=\ker\si^m$ for
any $m\geq n$ and $\si$ induces an injective endomorphism, also
denoted by $\si$, of the factor ring $R/\ker\si^n$.
\begin{theorem}\label{general case}
 Suppose $R$ is a noetherian \mbox{\rm PI} ring. Let $n\geq 1$
  be such that $\ker \si^n=\ker\si^{n+1}$ and $R'=R/I$, where $I=\ker\si^n$.
  The following conditions are equivalent:
\begin{enumerate}
  \item  $\A{R}$ is a \mbox{\rm PI} ring;
  \item $\A{R'}$ is a \mbox{\rm PI} ring;
  \item  $\bar{\si}$ is an automorphism of finite order on the
  center of $R'/\mathcal{B}(R')$.
\end{enumerate}
\end{theorem}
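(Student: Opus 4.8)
The plan is to derive all three equivalences from the two preparatory results, Corollary~\ref{reduction by kernel} and Proposition~\ref{ker contained in B}, after first recording the relevant structural facts about $R'$. First I would note that, since $R$ is noetherian, the chain $\ker\si\subseteq\ker\si^2\subseteq\cdots$ stabilizes, and by the choice of $n$ we have $\ker\si^m=\ker\si^n=I$ for all $m\geq n$. Because $a\in I$ gives $\si^{n+1}(a)=\si(\si^n(a))=0$, we have $\si(I)\subseteq I$, so $\si$ induces an endomorphism $\bar\si$ of the noetherian \mbox{\rm PI} ring $R'=R/I$. This induced map is injective: if $\si(a)\in I=\ker\si^n$ then $\si^{n+1}(a)=0$, whence $a\in\ker\si^{n+1}=\ker\si^n=I$. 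Thus $\bar\si$ is an injective endomorphism of $R'$, which is the single structural point on which everything rests.

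The equivalence $(1)\Leftrightarrow(2)$ would then be immediate from Corollary~\ref{reduction by kernel}: since $I=\ker\si^n$ we have $R'=R/\ker\si^n$ and hence $\A{R'}=\A{(R/\ker\si^n)}$, so $(2)$ is exactly the instance of condition (3) of that corollary for our particular $n$, while $(1)\Rightarrow(2)$ is the instance of condition (2) there. In particular $(1)\Rightarrow(2)$ and $(2)\Rightarrow(1)$ both follow without any further work.

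For $(2)\Leftrightarrow(3)$ I would apply Proposition~\ref{ker contained in B} to the noetherian \mbox{\rm PI} ring $R'$ equipped with its injective endomorphism $\bar\si$. The hypothesis $\ker\bar\si\subseteq\mathcal{B}(R')$ of that proposition holds trivially, since $\bar\si$ injective means $\ker\bar\si=0$. The proposition then asserts that $\A{R'}$ is a \mbox{\rm PI} ring if and only if the endomorphism induced by $\bar\si$ on the center of $R'/\mathcal{B}(R')$ is an automorphism of finite order, which is precisely statement $(3)$.

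I do not anticipate a serious obstacle: the theorem is a clean assembly of the two preparatory results. The only step requiring care is the verification that the choice $\ker\si^n=\ker\si^{n+1}$ makes the induced endomorphism $\bar\si$ on $R'$ injective, since it is exactly this injectivity that both furnishes the setting of Proposition~\ref{ker contained in B} (through $\ker\bar\si=0\subseteq\mathcal{B}(R')$) and guarantees, via Proposition~\ref{injective modulo radical}, that the further induced map on $R'/\mathcal{B}(R')$ appearing in $(3)$ is again injective and hence makes sense as the object whose order is being controlled.
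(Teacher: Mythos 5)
Your proposal is correct and follows essentially the same route as the paper: the equivalence $(1)\Leftrightarrow(2)$ via Corollary~\ref{reduction by kernel} and $(2)\Leftrightarrow(3)$ via Proposition~\ref{ker contained in B} applied to $R'$ with the induced injective endomorphism $\bar\si$ (so that $\ker\bar\si=0\subseteq\mathcal{B}(R')$ holds trivially). The only difference is cosmetic: you verify explicitly that $\ker\si^n=\ker\si^{n+1}$ forces $\si(I)\subseteq I$ and the injectivity of $\bar\si$ on $R'$, a point the paper delegates to the remarks preceding the theorem.
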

\begin{proof}
 As we have seen in   comments before the theorem, $\si$ induces an
 injective endomorphism of $R'$. Thus,   Proposition \ref{ker contained in
 B}, shows that the statements (2) and (3) are equivalent.

 The equivalence $(1)\Leftrightarrow (2)$ is given by Corollary
 \ref{reduction by kernel}.
\end{proof}

We have seen in Proposition \ref{injective modulo radical} that if
$\ker\si\subseteq \mathcal{B}(R)$ in a noetherian ring $R$, then
$\si^{-1}(\mathcal{B}(R))=\mathcal{B}(R)$. Hence
$\ker\si^n\subseteq \mathcal{B}(R)$, for any $n\geq 1$. This means
that $R'/\mathcal{B}(R')=R/\mathcal{B}(R)$ in the theorem above,
i.e. Proposition \ref{ker contained in B} and Theorem \ref{general
case} coincide when $\ker\si\subseteq \mathcal{B}(R)$.

 Notice that, as the following standard
 example shows, the ring $\A{R}$ is not necessary noetherian even
 when
 it is \mbox{\rm PI} and $R$ is noetherian.
 \begin{example}
 Let   $K$ be a field and $\si$   a $K$-linear endomorphism of the polynomial ring
 $R=K[y_1,\ldots,y_n]$ given by $\si(y_1)=0$ and $\si(y_k)=y_{k-1}$ for $k>1$.
  It is easy to see that $\A{R}$ is
 neither   left nor right noetherian. By Theorem \ref{general case},
 $\A{R}$ satisfies a polynomial identity. In fact, since
 $\ker \si^n=(y_1,\ldots,y_n)=I$, $\A{(R/I)}=K[x]$ and the proof of
  Theorem \ref{general case} shows that
 $\A{R}$ satisfies the identity
 $[x_1,x_2]^{n+1}=0$.
 \end{example}

For any ring $R$,   $\si$ induces an injective endomorphism of
$R/I$ where $I=\sum_{k=1}^{\infty}\ker\si^{k}$. Thus one could
hope that an analog of Theorem \ref{general case} could hold at
least in the case $I$ is a prime ideal of $R$ (then, by
Proposition \ref{prime PI endoporphism}, statements (2) and (3)
are equivalent and (1) always implies (2)). However this is not
the case as the following example shows.
 \begin{example}
Let   $K$ be a field and $\si$   a $K$-linear endomorphism of the
polynomial ring
 $R=K[y_i\mid i\geq
 1]$ given by $\si(y_1)=0$ and $\si(y_k)=y_{k-1}$ for $k>1$.
 Then $I=\sum_{k=1}^{\infty}\ker\si^{k}=(y_1,y_2,\ldots)$
 and $\A{R}/\A{I}\simeq \A{R/I}\simeq K[x]$ is a \mbox{\rm PI} ring.

 We claim that $\A{R}$ does not satisfy a polynomial identity
 by showing that, for any  $m\geq 2$
 and $k\geq 1$, $\A{R}$ does not satisfy
  the identity  $S_m(x_1,\ldots,x_m)^k$,  where
   $S_m(x_1,\ldots,x_m)$ denotes the standard identity in
 indeterminates $x_1,\ldots,x_m$. To this end, let us fix $n=n(m,k)$
 such that $n>mk$. Then $S=S_m(y_nt,\ldots,y_{n+m-1}t)=(y_n^m+f)t^m$
 for some suitable $f\in \A{R}$ such that $\deg_{y_n}(f)<m$. The
 choice of $n$ implies that $y_n^m+f\not \in \ker\si^{(k-1)m}$.
 Hence $S^k\ne 0$ follows, as $R$ is a domain.
\end{example}

\end{document}